\normalfont\fontsize{11}{12}\bfseries}{\thesection}{1em}{}
\titleformat{\subsection}[runin]
  {\normalfont\fontsize{10}{11}\bfseries}{\thesubsection}{.5em}{}
\theoremstyle{plain}
  \newtheorem{thm}{Theorem}[section]
  \newtheorem{cor}[thm]{Corollary}
  \newtheorem{lemma}[thm]{Lemma}
\newtheorem{prop}[thm]{Proposition}
\newtheorem{Ex}[thm]{Example}
\theoremstyle{definition}
\newtheorem{dfn}[thm]{Definition}
\newtheorem{rmk}[thm]{Remark}
\numberwithin{equation}{section}
\title{O'Grady's Birational Maps via Wall-hitting}
\author{Huachen Chen}
\date{}
\begin{document}
\maketitle

\begin{abstract}

 \noindent We observe that O'Grady's birational maps \cite{O'G1} between moduli of sheaves on an elliptic K3 surface can be interpreted as intermediate wall-crossing (wall-hitting) transformations at the so-called totally semistable walls, studied by Bayer and Macr\`i \cite{BM2}. As an ingredient to prove this observation, we describe the first totally semistable wall for ideal sheaves of $n$ points on the elliptic K3. We then use this observation to make a remark on Marian and Oprea's strange duality \cite{MO}.    
\end{abstract}

\section{Introduction}
In \cite{O'G1}, K. O'Grady constructed, in particular, a series of birational maps $$\psi_r: M_H (v_r) \dashrightarrow M_H(v_{r+1}),$$ where $M_H(v_r)$ is the moduli of Gieseker $H-$semistable rank $r$ sheaves on an elliptic K3 surface and of class $v_r \in H^*(X, \mathbb Z).$ In \cite{BM2}, A. Bayer and E. Macr\`i described the wall-crossing behavior of moduli of complexes on a K3 surface $X$ with fixed class $v \in H^*(X, \mathbb Z)$, which in particular revealed the birational geometry of moduli of sheaves on $X$. \textit{The main goal of this note} is to show that O'Grady's birational maps appear as intermediate wall-crossing (also refer to as wall-hitting) transformations at totally semistable walls.

Our motivation comes from the study of strange duality for K3 surfaces: in \cite{MO}, A. Marian and D. Oprea interpret O'Grady's birational maps, from $M_H(v_r)$ to Hilbert scheme of points $X^{[a]},$ as a Fourier-Mukai transform and use that to propagate strange duality isomorphisms on Hilbert schemes of points to a large class of pairs of moduli spaces of higher rank sheaves; on the other hand, work of Bayer and Macr\`i, \cite{BM1} and \cite{BM2}, enable one to extend strange duality to Moduli of complexes setting and to study them by wall-crossing. Their work indicates that crossing totally semistable walls is particularly interesting for strange duality. As an application of our main theorem, we obtain new examples of strange duality for elliptic K3 surfaces, based on a result of Marian and Oprea.  

\subsection{O'Grady's birational maps as wall-crossing transformations.}\label{sec1.1}
We recall the construction in \cite{O'G1}, see also \cite{MO}. Assume that $\pi: X \rightarrow \mathbb P^1$ is an elliptic K3 surface whose N\'evon-Severi group is $NS(X)=\mathbb Zc \oplus \mathbb Zf,$ where $c$ and $f$ are the classes of a section and a fiber of $\pi,$ respectively. Therefore, we have $c^2=-2,\  c\cdot f=1,\  f^2=0.$ 

Now fix a Mukai vector $v_r=(r, c+kf, p)\in H^0(X, \mathbb Z)\oplus H^{1,1}(X, \mathbb Z)\oplus H^4(X, \mathbb Z),$ choose a polarization $H:= c+mf,$ where $m$ is sufficiently large. Suppose that $F_r$ is a $H-$stable torsion-free sheaf with class $v(F_r)=v_r.$ Twisting by $\mathcal O(f)$ if necessary, we may assume $\chi(F_r)=-1.$ It can be shown that $ Ext^1( F_r, \mathcal O_X) \cong \mathbb C$ for $F_r \in M_H(v_r)$ general in moduli. The corresponding nontrivial extension 

\begin{equation}\label{extension}
   0 \rightarrow \mathcal O_X \rightarrow F_{r+1} \rightarrow F_r \rightarrow 0 \tag{$\star$}
\end{equation}

\noindent produces a rank $r+1$ sheaf $F_{r+1}$ which is $H$-stable provided that $F_r \in M_H(v_r)$ is away from a particular codimension one locus. Thus we obtain a birational map $$\psi_r: M_H(v_r) \dashrightarrow M_H(v_{r+1}).$$


On the othe hand, in \cite{BM2} Bayer and Macr\`i study systematically a wall-crossing behavior known as totally semistable wall-crossing. (In the case of skyscraper sheaves, this kind of walls had been discovered and exploited by Bridgeland \cite{Bri2}.) At a totally semistable wall $\mathcal W$ for a class $v$, the moduli spaces $M_{\sigma_+}(v)$ and $M_{\sigma_-}(v)$ on the two sides share no common objects, where $\sigma_+$ and $\sigma_-$ are two stability conditions separated by $\mathcal W.$ However, they are linked via an intermediate moduli $M_{\sigma_0}(v_0),$ which parametrizes objects of another class $v_0$ and stable with respect a stability condition $\sigma_0\in \mathcal W.$ More precisely, there are birational maps 
\begin{center}
\begin{tikzcd}[column sep=tiny]
M_{\sigma_+}(v) & & M_{\sigma_-}(v) \\
       & M_{\sigma_o}(v_o)  \arrow[ul, dashed, "\phi_+"] \arrow[ru, dashed, "\phi_-"]
\end{tikzcd}. 
\end{center} 
Morevoer, these birational maps $\phi_\pm$ are induced by spherical twists or inverse spherical twists, see \cref{sec2} for detail. We refer to $\phi_\pm$ as \textit{wall-hitting transformations} at the wall $\mathcal W.$

Our observation is that the extension \eqref{extension} inducing O'Grady's maps is precisely the defining triangle of an inverse spherical twist:

\begin{equation*}
ST^{-1}_{\mathcal O_X}(F_r) \rightarrow F_r \rightarrow \bigoplus_i R^{i}Hom(\mathcal O_X, F_r)\otimes \mathcal O_X[2-i] \cong \mathcal O_X[1].
\end{equation*}

\noindent One sees that $F_{r+1} \cong ST^{-1}_{\mathcal O_X}(F_r).$ Besides, up to twisting by $\mathcal O(f),$ we have $$(v(\mathcal O_X), v_{r+1})=-\chi(\mathcal O_X, F_{r+1}) = -1,$$ which satisfies a lattice-theorectic criterion for a totally semistable wall of $v_{r+1}$ to occur, according to Bayer-Macr\`i's classification of walls (see \cref{BM5.7}).

With these coincidences, it is natural to conjecture that O'Grady's birational maps $\psi_r$ are in fact the wall-hitting transformations at some totally semistable walls. We show that this is true. Moreover, one can manage to pass these walls consecutively:

\begin{thm}\label{thm0}
For any integer $r\geq 1,$ there exists a path $\gamma$ in the space of stability conditions $Stab^\dag(X),$ starting from the Gieseker chamber of $v_r,$ and passing through a wall $\mathcal W_i$ for each $v_i,$ $1\leq i \leq r,$ in the decreasing order (see \cref{firstwalls}), such that $\mathcal W_i$ is the first totally semistable of $v_i$ along $\gamma$. Moreover, one of the wall-hitting transformations at $\mathcal W_i, 1\leq i \leq r,$ can be identified with O'Grady's birational maps $\psi_{i-1}^{-1}: M_H(v_i)\dashrightarrow M_H(v_{i-1})$ generically.
\end{thm}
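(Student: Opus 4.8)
The plan is to run an induction on $r$, using the spherical twist $ST_{\mathcal{O}_X}$ (combined with the normalizing twist $-\otimes\mathcal{O}(f)$) as the autoequivalence that relates the wall-and-chamber structures of consecutive classes $v_i$ and $v_{i-1}$. The starting point is the observation, recorded in \eqref{extension}, that $v_i = v_{i-1}+v(\mathcal{O}_X)$, so that all of the $v_i$ are collinear inside a single rank-two sublattice $\mathcal{H}=\langle v_1, v(\mathcal{O}_X)\rangle \subset H^*_{alg}(X,\mathbb Z)$ of signature $(1,1)$. After the twist by $\mathcal{O}(f)$ that makes $(v(\mathcal{O}_X), v_i)=-1$ at the relevant stage, the spherical class $v(\mathcal{O}_X)$ lies in $\mathcal{H}$ with $(v(\mathcal{O}_X), v_i)=-1<0$, so \cref{BM5.7} identifies the wall $\mathcal{W}_i$ governed by $\mathcal{H}$ as a totally semistable wall of $v_i$ at which $\mathcal{O}_X$ is the destabilizing spherical object, and pins the intermediate class as $v_0=v_{i-1}$.

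First I would fix a stability condition $\sigma_0$ in the large-volume region lying simultaneously in the Gieseker chamber of every $v_i$ (possible since the polarization $H$ is common and $m$ is large), and take $\gamma$ to move $\sigma_0$ inward. The base case $i=1$ is exactly the description of the first totally semistable wall for ideal sheaves of $n$ points (\cref{firstwalls}): under $M_H(v_1)\cong X^{[n]}$ this pins down $\mathcal{W}_1$ and shows that $\gamma$ meets no totally semistable wall of $v_1$ before it. For the inductive step I would apply $ST_{\mathcal{O}_X}$: since this autoequivalence acts on $Stab^\dagger(X)$ and realizes on Mukai vectors the reflection $x \mapsto x + (x,v(\mathcal{O}_X))\,v(\mathcal{O}_X)$ sending $v_i\mapsto v_{i-1}$, it carries the part of $\gamma$ just past $\mathcal{W}_i$ to a path for $v_{i-1}$ beginning in its Gieseker chamber, reducing the claim for $v_i$ to that for $v_{i-1}$. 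Concatenating yields a single path meeting $\mathcal{W}_r,\mathcal{W}_{r-1},\dots,\mathcal{W}_1$, and the monotonicity built into the construction forces the decreasing order in $i$, as higher-rank classes encounter their $\mathcal{O}_X$-wall at larger volume.

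To identify the transformation, recall from \cref{sec2} that at $\mathcal{W}_i$ the two wall-hitting maps $\phi_\pm\colon M_{\sigma_0}(v_{i-1})\dashrightarrow M_{\sigma_\pm}(v_i)$ are induced by $ST_{\mathcal{O}_X}^{\mp 1}$. I would match the branch induced by $ST^{-1}_{\mathcal{O}_X}$, with target the Gieseker side $M_{\sigma_+}(v_i)=M_H(v_i)$, with O'Grady's $\psi_{i-1}$: on the dense open locus where $F_{i-1}$ is stable with $Ext^1(F_{i-1},\mathcal{O}_X)\cong\mathbb C$, the defining triangle of $ST^{-1}_{\mathcal{O}_X}$ degenerates to \eqref{extension}, so $ST^{-1}_{\mathcal{O}_X}(F_{i-1})\cong F_i$ and the map equals $\psi_{i-1}$. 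Its inverse, induced by $ST_{\mathcal{O}_X}$, is then the wall-hitting transformation identified with $\psi_{i-1}^{-1}$; as both are birational and agree on a dense open set, they coincide generically.

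The hard part, I expect, will be the first-wall property in the second step, namely showing that no totally semistable wall of $v_i$ is crossed before $\mathcal{W}_i$. This requires a finiteness and lattice analysis ruling out competing spherical or isotropic classes — necessarily built from the section and fibers, given $c^2=-2,\ c\cdot f=1,\ f^2=0$ — that could produce an earlier totally semistable wall, together with the verification that $ST_{\mathcal{O}_X}$ sends the chamber of $v_i$ adjacent to $\mathcal{W}_i$ precisely onto the Gieseker chamber of $v_{i-1}$, so that the induction closes without gaining or losing walls. The base-case analysis of the Hilbert scheme $X^{[n]}$ carries the brunt of this bookkeeping.
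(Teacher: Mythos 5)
Your outline shares the paper's broad shape (base case for ideal sheaves, then relate consecutive classes via the spherical twist, then match the wall-hitting triangle with O'Grady's extension), but the inductive mechanism you propose is not what the paper does and contains real gaps. First, the claim that all the $v_i$ live in a single rank-two lattice $\langle v_1, v(\mathcal O_X)\rangle$ is not consistent with the construction: because of the normalizing twist by $\mathcal O(-2f)$ at each stage, the destabilizing spherical object at the wall $\mathcal W_r$ is $\mathcal S_{r-1}=\mathcal O(2(r-1)f)$, which changes with $r$, so each $\mathcal W_i$ has its own hyperbolic lattice $\mathcal H_i=\langle v_i, s_{i-1}\rangle$; the paper even warns that these are walls of different classes which a priori could intersect. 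Second, your inductive step rests on the unproven assertion that $ST_{\mathcal O_X}$ carries the chamber of $v_i$ just below $\mathcal W_i$ onto the Gieseker chamber of $v_{i-1}$; you flag this yourself, but it is exactly the hard point, and the paper avoids it entirely. Instead the paper works on a fixed slice $P_H$ with $H=c+mf$, $m\gg 0$, writes down the explicit semicircle equations to show the $\mathcal W_i$ are nested (\cref{lemma4}), and then gets the ``first wall'' property from O'Grady's theorem itself: the generic object produced by wall-hitting at $\mathcal W_i$ is O'Grady's sheaf $E_i$, which is Gieseker stable, so no totally semistable wall can separate $\mathcal W_i$ from the large-volume limit. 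This also forces an extra step you do not anticipate: the base-case analysis is carried out on the slice $P_n$, while the full path lives on $P_m$ for $m\gg n$, so the paper must deform the slice and rule out a competing wall overtaking $\mathcal W_1$ (\cref{lemma6}, which ends in a Brill--Noether/Abel--Jacobi contradiction).

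The other substantive gap is in the identification with O'Grady's map. By \cref{BMthm2}, the wall-hitting derived equivalence at a nonisotropic totally semistable wall is in general a \emph{composition} of finitely many spherical twists, one for each step in the $G$-orbit from $v_i$ down to the minimal class $v_0$. To match it with O'Grady's single extension you must prove that $v_{i-1}$ is already the minimal class, i.e.\ that $v_{i-1}$ pairs non-negatively with every effective spherical class in $\mathcal H_i$; this is \cref{prop2} in the paper and is proved by an explicit hyperbola-versus-line computation in the lattice $\mathcal H_i$ (together with \cref{lemma5}, via Arcara--Miles, that $\mathcal S_{i-1}$ is actually stable at the wall). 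Your proposal treats this identification as immediate from the defining triangle of $ST^{-1}_{\mathcal O_X}$, which assumes precisely the point that needs proof.
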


\begin{figure}
    \centering
    \includegraphics[width=0.8\textwidth]{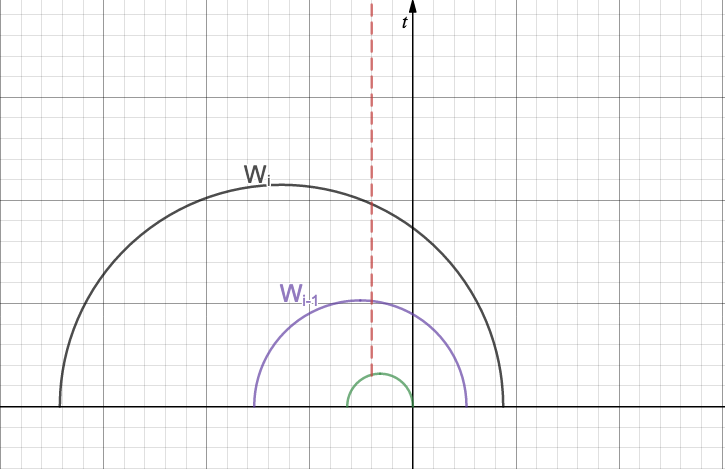}
    \caption{the first totally semistable wall $\mathcal W_i$ of $v_i, 1\leq i \leq r$.}
    \label{firstwalls}
\end{figure}

Again $H:=c+mf$ is an ample class with $m>>0.$ It determines a two dimensional slice (see \cref{firstwalls}) of stability conditions $P_H:=\{\sigma_{uH, tH}: u\in \mathbb R, t>0\}$ (\cref{slice}), on which the walls $\mathcal W_i, 1\leq i\leq r$ are nested semicircles, as we will see in \cref{lemma4}. (Note that these are walls of different classes, so a priori could intersect each other.) The path $\gamma$ can be taken to be a vertical ray (the red dashed line in \cref{firstwalls}) on $P_H,$ starting from $\infty$ and going downward.

Thus, O'Grady's classical result that $M_H(v_r)$ is birational to $X^{[n]},$ the Hilbert scheme of $n$ points on $X,$ can be obtained by composing the wall-hitting transformations at these totally semistable walls that $\gamma$ crosses.

\begin{rmk}
Notice that we identify the wall-hitting transformations with O'Grady's maps $M_H(v_r)\dashrightarrow M_H(v_{r-1})$ only up to birational equivalences, because: first, we can only expect a general element in the moduli of complexes $M_\sigma(v_i),$ where $\sigma$ is above $\mathcal W_i,$ to be a sheaf, but not all; second, on the moduli of complexes side, the spherical twist in fact induces an isomorphism in codimension one, according to \cite{BM2} (see \cref{BMthm2}), while the O'Grady's maps are given directly by the extension process only on the complement of a divisor. The isomorphim in codimension one induced uniformally by a spherical twist will be important for our application to strange duality.
\end{rmk}

As the first step to prove \cref{thm0}, we describe the first totally semistable wall for ideal sheaves of $n$ points on the elliptic K3 surface $X$:

\begin{prop} \label{prop0}
There exists a path $\gamma_1$ in $Stab^\dag(X),$ strating from the Gieseker chamber of ideal sheaves $I_Z$ of $n$ points on $X,$ such that the first totally semistable wall for $I_Z$ along $\gamma_1$ is caused by $\mathcal O(-C)\hookrightarrow I_Z,$ where $C$ is a genus $n$ curve that contains Z.
\end{prop}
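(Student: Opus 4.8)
The plan is to remove the section class from $v(I_Z)$ by applying the autoequivalence $-\otimes\mathcal{O}(C)$, thereby reducing to a single explicit wall in the slice $P_H$. I take $C=c+nf$, the unique effective class with $C^2=2n-2$ (so $C$ has arithmetic genus $n$) and with $C\cdot H$ minimal among genus-$n$ classes, which is what will make the associated wall outermost. Set $F:=I_Z\otimes\mathcal{O}(C)$, so that $v(F)=(1,C,0)$, and define $\gamma_1$ to be the image under $-\otimes\mathcal{O}(C)$ of a vertical ray in $P_H$. Since $-\otimes\mathcal{O}(C)$ is an autoequivalence preserving $Stab^\dagger(X)$, carrying the Gieseker chamber of $I_Z$ to that of $F$ and totally semistable walls to totally semistable walls in an order-preserving way along the ray, it suffices to show that the first totally semistable wall for $F$ is the one caused by $\mathcal{O}_X\hookrightarrow F$; its preimage under the inverse twist is precisely the inclusion $\mathcal{O}(-C)\hookrightarrow I_Z$.

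Next I would locate and verify this wall. Because $\mathcal{O}_X$ is spherical and $\langle v(\mathcal{O}_X),v(F)\rangle=-1$, the criterion \cref{BM5.7} shows that the numerical wall $\mathcal{W}$ carrying $v(\mathcal{O}_X)$ and $v(F)$ into a common ray is totally semistable: on $\mathcal{W}$ the spherical object $\mathcal{O}_X$ is stable of the same phase as $F$, and $\mathrm{Hom}(\mathcal{O}_X,F)=H^0(I_Z(C))$ is one-dimensional for general $Z$ (the $n$ points impose independent conditions on the $(n+1)$-dimensional system $|C|$, cutting out a unique curve), so the essentially unique map $\mathcal{O}_X\hookrightarrow F$ destabilizes $F$ and, through the spherical mechanism, every $\sigma$-semistable object of class $v(F)$ on $\mathcal{W}$. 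A direct central-charge computation in $P_H$ shows that $\mathcal{W}$ is a genuine semicircle, of radius$^2=(m+n-2)^{-2}+(m-1)^{-1}>0$ centered at $u_0=-(m+n-2)^{-1}$, which the vertical ray meets; the quotient $F/\mathcal{O}_X$ has class $(0,C,-1)$, untwisting to the ideal $I_{Z/C}$ of $Z$ in the genus-$n$ curve $C$.

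The hard part is to prove that $\mathcal{W}$ is the \emph{first} totally semistable wall, i.e.\ that no totally semistable wall of $v(F)$ lies above it along the ray (non-totally-semistable walls above $\mathcal{W}$ are harmless, so only totally-semistable-inducing classes need to be excluded). By \cref{BM5.7} any such wall is produced by a spherical class $w'=(r',D',s')$ with $\langle w',v(F)\rangle<0$ or an isotropic class with $\langle w',v(F)\rangle\in\{1,2\}$, and by Bayer--Macr\`i's local finiteness only finitely many of these meet the ray above any fixed height. I would record the radius of the wall attached to $w'$ as
\begin{equation*}
R(w')^2=\frac{s'^2}{P^2}+\frac{2\,s'\,(C\cdot H)}{H^2\,P},\qquad P=(C\cdot H)\,r'-D'\cdot H,
\end{equation*}
and compare it with $R(\mathcal{O}_X)^2=(C\cdot H)^{-2}+2(H^2)^{-1}$. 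The goal is the uniform bound $R(w')^2\le R(\mathcal{O}_X)^2$ for every destabilizing $w'$: using the spherical relation $D'^2-2r's'=-2$ together with the Hodge index inequality and $m\gg0$, this reduces to an explicit inequality in the integers $r',\,D'\cdot H,\,D'\cdot C,\,s'$, for which the line bundles $\mathcal{O}(-kf)$ and $\mathcal{O}(-c-kf)$ are the borderline competitors and a finite check settles the remaining bounded cases. I expect this optimization — showing that $\mathcal{O}_X$ uniformly maximizes the wall radius among totally-semistable-inducing classes — to be the main obstacle, since it is exactly where the arithmetic of $NS(X)=\mathbb{Z}c\oplus\mathbb{Z}f$ and the largeness of $m$ must be brought to bear together.
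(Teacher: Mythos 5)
Your setup is sound and matches the paper's up to a harmless twist: you work with $F=I_Z(C)$ and the destabilizer $\mathcal O_X$, while the paper works with $I_Z$ and $\mathcal O(-C)$ directly (and performs your twist only afterwards, in \cref{cor1}); the identification of the wall, the verification via \cref{BM5.7} that $(v(\mathcal O_X),v(F))=-1$ makes it totally semistable, and the computation of its semicircle are all correct. The problem is that the entire content of the proposition lives in the step you defer — showing no totally semistable wall sits above $\mathcal W$ — and the way you propose to do it does not close. You frame it as a purely numerical optimization: for every spherical class $w'$ with $(w',v(F))<0$ (plus isotropic classes), compare the radius $R(w')$ of its numerical wall with $R(\mathcal O_X)$ and win by "an explicit inequality in the integers" plus "a finite check." That comparison, taken over all spherical classes with negative pairing, is not true and cannot be made true by lattice arithmetic alone: the paper's discriminant estimate gives $\Delta \le -4n(r^2-1)+((I_Z,E)+2r)^2$ for a rank-$r$ destabilizer, which is useless without an a priori lower bound on $(I_Z,E)$, and that bound ($(I_Z,E)>-4r$, Lemma 3.8 in the paper) is obtained not numerically but from the requirement that the destabilizing object actually lie in the tilted heart along the wall: one uses $\mathrm{Im}\,Z_{uH,tH}(E)\ge 0$, i.e.\ $\mu_H(E)\le u$, together with the location of the endpoints of the candidate semicircle, to derive a contradiction when $(I_Z,E)$ is too negative. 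Your sketch never invokes the heart condition, so the "uniform bound $R(w')^2\le R(\mathcal O_X)^2$" you are optimizing for is simply false as a statement about numerical walls of spherical classes.

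Two further omissions: you must also rule out totally semistable walls caused by spherical \emph{quotient} objects of $F$ (the paper's Lemma 3.10 shows such a $Q$ is a shift $F'[1]$ of a torsion-free sheaf with $\mu_H>0$, and then a separate discriminant computation shows its wall cannot reach above $\mathcal W$ along the chosen ray), and you should dispose of the isotropic (Hilbert--Chow) case explicitly rather than folding it into the same radius comparison — the paper's Lemma 3.5 shows the only such wall on $P_H$ is the vertical ray $\{u=0\}$, which a vertical path never meets. Also note your condition "$(w',v(F))\in\{1,2\}$" for the isotropic case should be $(v,w')=1$, per \cref{BM5.7}. In short: right wall, right strategy, but the decisive estimates are exactly where geometric input (membership in the heart, torsion-freeness and sign of $c_1\cdot H$ of the destabilizer) must enter, and your proposal leaves them as an unexecuted and, as formulated, unprovable numerical claim.
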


\begin{rmk}
The geometric intuition of this proposition is that given $n$ points on a K3 surface, there is always a genus $g$ curve that pass through these points, as long as $g\geq n.$ As in \cite{AB}, the ideal sheaf of this curve $\mathcal O(-C),$ as a subobject of $I_Z$ for all $Z \in X^{[n]},$ can become destabling and so potentially cause a totally semistable wall.  
\end{rmk}

\subsection{Strange duality for elliptic K3 surfaces.}\label{sec1.4} As an application of \cref{thm0}, we give two examples of strange duality isomorphisms for an elliptic K3 (\cref{ex1} and \cref{ex3}), based on a result of Marian and Oprea. Along the way, we also generalize an observation by Bayer and Macr\`i about wall-crossing for strange duality (\cref{prop4}).

We recall very briefly the setup of strange duality for a K3 surface $X$ (see e.g. \cite{MO1} for details). Let $v, w \in H^*_{alg}(X)$ be a pair of Mukai vectors with $(v, w^\vee)=0.$ On the moduli space $M_H(v)$ of stable torsion-free sheaves of class $v,$ one has a determinant line bundle $\theta_v(w),$ depending the orthogonal class $w$ (see \cref{detlb}). Symmetrically, we have another moduli space $M_H(w)$ with a line bundle $\theta_w(v).$ It has been observed that for some choices of $v$ and $w$, we have the following so-called strange duality phenomena:
\begin{enumerate}[(1)]
\item $h^0(M_H(v), \theta_v(w)) = h^0(M_H(w), \theta_w(v));$

\item Moreover, sometimes there exists a geometric explanation of the above equality: \textit{under certain assumptions}, the locus $$\Theta:=\{(E,F)\in M_H(v)\times M_H(w)\ |\ H^0(X, E\otimes F)\cong Ext^1(F^\vee[1], E) \neq 0\},$$  is an effective divisor whose associated line bundle is $\theta_v(w)\boxtimes \theta_w(v)$ \cite{LeP}. Thus, it defines a map $$SD: H^0(M_H(v), \theta_v(w)) \to H^0(M_H(w), \theta_w(v))^*,$$ which in some cases turn out to be an isomorphism. 
\end{enumerate}

A well-known example of strange duality isomorphism is between a pair of rank one vectors, that is, the moduli spaces are Hilbert schemes of points. In \cite{MO}, A. Marian and D. Oprea interpret O'Grady's maps $\Psi_r: M_H(v_r)\dashrightarrow X^{[a]}$ as induced by Fourier-Mukai transforms, and use that to propagate Hilbert schemes strange duality to a large family of pairs of vectors. Here is one of their theorems which we will base on.

\begin{thm}[\cite{MO}, Theorem 2]\label{SD}
Let $X$ be the elliptic K3 surface as in \cref{sec1.1} with a polarization $H:=c+mf,$ $m>>0.$ Give $v, w$ be Mukai vecotrs with $(v, w^\vee) =0$ and of ranks $r$ and $s$ respectively, satisfying the following:
\begin{enumerate}[(i).]
    \item $r, s\geq 2;$

\item $c_1(v).f=1,$ $c_1(w).f=1;$

\item $(v,v)+(w,w)\geq 2(r+s)^2.$
\end{enumerate}

\noindent Then the duality map $SD: H^0(M_H(v),\theta_v(w))\to H^0(M_H(w), \theta_w(v))^*$ is an isomorphism.
\end{thm}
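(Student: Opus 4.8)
The plan is to reduce this higher-rank statement to the already-understood rank-one (Hilbert scheme) case by transporting the entire strange-duality setup along O'Grady's maps, which---as this note emphasizes---are induced by Fourier--Mukai equivalences (spherical twists). Because $c_1(v)\cdot f = 1$ and $r\geq 2$, O'Grady's extension construction of \cref{sec1.1} applies to $v$, and iterating the inverse O'Grady maps $r-1$ times produces a birational identification $\Psi\colon M_H(v)\dashrightarrow X^{[a]}$ with a Hilbert scheme of $a = \tfrac{1}{2}(v,v)+1$ points; symmetrically $\Psi'\colon M_H(w)\dashrightarrow X^{[b]}$ with $b=\tfrac{1}{2}(w,w)+1$. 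Here I use that each inverse step subtracts $v(\mathcal O_X)=(1,0,1)$ from the Mukai vector (after the harmless twists by $\mathcal O(f)$ of \cref{sec1.1}) while preserving the self-pairing $(v,v)$, so the endpoint is genuinely a rank-one twisted ideal sheaf of points. Hypothesis (iii) is what I would invoke to guarantee that both chains can be run all the way down to rank one, with every intermediate moduli space nonempty and of expected dimension.

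Second, I would track the determinant line bundles of \cref{detlb} along these chains. Each elementary map is an (inverse) spherical twist $ST_{\mathcal O_X}^{\mp 1}$, hence an autoequivalence $\Phi$ of $D^b(X)$ acting as an isometry on the Mukai lattice; since $\theta_v(w)$ is defined cohomologically through the orthogonal class $w$, it is functorial for $\Phi$, giving $\theta_v(w)\cong \Psi^\ast\theta_{X^{[a]}}(w_0)$ and $\theta_w(v)\cong (\Psi')^\ast\theta_{X^{[b]}}(v_0)$ for the transported classes $w_0,v_0$. Crucially, the moduli spaces here are holomorphic symplectic and $\Psi,\Psi'$ are isomorphisms in codimension one (by \cref{thm0} together with \cref{BMthm2}), so pullback of sections is an isomorphism $H^0(M_H(v),\theta_v(w))\cong H^0(X^{[a]},\theta_{X^{[a]}}(w_0))$, and likewise for $w$. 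This is the step that upgrades ``birational'' to an honest identification of the vector spaces in question.

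Third, I would verify that the theta divisor is transported compatibly. The locus $\Theta$ is cut out by the jumping of $\mathrm{Ext}^1(F^\vee[1],E)$, a condition phrased purely in $D^b(X)$ and invariant under $\Phi\times\Phi'$ away from the indeterminacy loci; hence $\Theta$ corresponds to the theta divisor on $X^{[a]}\times X^{[b]}$ attached to $(w_0,v_0)$, and the duality map $SD$ is intertwined with the rank-one duality map through the section isomorphisms of the previous paragraph. It then remains to invoke the base case: strange duality is an isomorphism for the pair of Hilbert schemes $X^{[a]},X^{[b]}$, which is the rank-one input established in \cite{MO} (and ultimately resting on the divisor construction of \cite{LeP}). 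Assembling the three compatibilities then forces $SD$ for $(v,w)$ to be an isomorphism.

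The main obstacle I anticipate is the second and third steps taken together: proving that the determinant bundle and the theta divisor are carried across \emph{exactly}, with no sections lost or gained over the indeterminacy loci of the O'Grady maps, and with the transported orthogonality $(v_0,w_0^\vee)=0$ correctly matching the two autoequivalences $\Phi,\Phi'$. Concretely one must check that the endpoint classes $w_0,v_0$ again satisfy the hypotheses under which Hilbert-scheme strange duality is known, and that the holomorphic-symplectic extension of sections applies uniformly at every stage, rather than at a single wall. This is precisely the numerical bookkeeping that hypothesis (iii) is engineered to make work, and controlling it along the whole tower is where the real effort lies.
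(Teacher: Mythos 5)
First, a caveat: the paper does not prove this statement --- it is quoted verbatim from \cite{MO} (Theorem 2) and used as a black box, so there is no internal proof to compare against. Judged on its own terms, your sketch does capture the broad strategy that Marian and Oprea actually follow (reduce to Hilbert schemes along O'Grady's maps, transport determinant bundles and theta divisors, invoke the rank-one case), but two of your steps have genuine gaps. The smaller one is the role of hypothesis (iii): it has nothing to do with nonemptiness or expected dimension of the intermediate moduli (on a K3 these are automatic once $(v,v)\geq -2$, and you yourself note the self-pairing is preserved along the chain). As \cref{sec4.2} of this paper records, writing $v=(r,c+(a+rp)f,p)$ and $w=(s,c+(b+sq)f,q)$, condition (iii) is equivalent to $p+q+r+s\leq 0$, i.e.\ to the nefness/effectivity of the line bundle $L=\mathcal O_X(-(p+q+r+s)f)$ whose tautological bundles $L^{[a]},L^{[b]}$ the theta bundles become on the Hilbert-scheme side. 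Without that positivity the rank-one input is simply not available, so (iii) is doing numerical work in your step four, not your step one.

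The serious gap is in your third step. The invariance of $\operatorname{Ext}^1(F^\vee[1],E)=\operatorname{Hom}(F^\vee,E)$ under an autoequivalence $\Phi$ holds only if you apply $\Phi$ to \emph{both} arguments; the induced operation on $F$ is then $F\mapsto(\Phi(F^\vee))^\vee$, which is \emph{not} an independently chosen $\Phi'$. Concretely, applying $ST_{\mathcal O}^{-1}$ to $E$ raises $r$ by one while the conjugate-dual operation on $F$ lowers $s$ by one: this is exactly the computation in \cref{propex}(a), $\operatorname{Hom}(F_s^\vee,E_r)=\operatorname{Hom}(F_{s-1}^\vee,E_{r+1})$, and it is why \cref{ex1} only trades rank along the anti-diagonal $(r,s)\mapsto(r+1,s-1)$ and only lets \emph{one} of the two vectors reach rank $\leq 1$. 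Your plan of running both towers down to rank one independently and declaring $\Theta$ ``invariant under $\Phi\times\Phi'$'' therefore does not go through formally; to reduce both factors one must compute $h^0(X,E\otimes F)$ directly from the iterated-extension structure $0\to\mathcal O_X\to F_{r+1}\to F_r\to 0$ on each side (using vanishing statements for generic members that again consume the numerical hypotheses), and show that for generic $(E,F)$ it equals $h^0(I_Z\otimes I_W\otimes L)$. That cohomological bookkeeping is the actual content of the Marian--Oprea proof and is precisely what your sketch defers to ``the main obstacle'' without supplying an idea for overcoming it.
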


With our wall-crossing interpretation of O'Grady's maps $\psi_r: M_H(v_r)\dashrightarrow M_H(v_{r+1}),$ we obtain the following based on \cref{SD}. 

\begin{Ex}\label{ex1}
 Let $X$ and $H$ be as in \cref{SD}, suppose that $v, w$ is a pair Mukai vectors with $(v, w^\vee)=0.$ Denote $r, s$ the rank of $v, w$ respectively. If $v$ and $w$ in addition satisfy:
 \begin{enumerate}[(i).]
     \item $r, s \geq 0,$ $r+s\geq 4.$
     
     \item $c_1(v).f=1,$ $c_1(w).f=1;$

\item $(v,v)+(w,w)=2(r+s)^2.$

 \end{enumerate}
 Then, strange duality holds for the pair $v$ and $w.$
\end{Ex}

\noindent Note that condition $(iii)$ is much more restrictive, compare to that of \cref{SD}. The only gain in this example is that one of the vectors can have rank $0$ or $1$ while the other have a higher rank.

To state the second example, we need to generalize the setting of strange duality. In \cite{BM2}, Bayer and Macr\`i extend the definition of determinant line bundles (\cref{detlb}) and therefore strange duality to Moduli of complexes. Let $X$ be any K3 surface, $v, w$ be Mukai vectors with $(v, w)=0$ and $\sigma \in Stab(X)$ be a stability condition. They consider a strange duality morphism $$SD_\sigma: H^0(M_\sigma(v), \theta_v^\sigma(w)) \to H^0(M_\sigma(w), \theta_w^\sigma(v))^*,$$ defined by the effective divisor $\Theta_\sigma:=\{(E, F)\in M_\sigma(v)\times M_\sigma(w)| Ext^1(F, E)\neq 0\}.$

\begin{rmk}
To recover the strange duality morphism $SD$ considered by Marian and Oprea, $\sigma$ should be chosen in the Gieseker chamber of $v$ but not in that of $w.$ Indeed, we should have $M_H(v)= M_\sigma(v)$ and $M_H(w)\cong M_\sigma(-w^\vee).$
\end{rmk}

One can ask how does the morphism $SD_\sigma$ change as the stability condition $\sigma$ varies. In the case when $(w,w)=0, (v,v)\geq 2$, they prove that crossing a totally semistable wall of $v$ "annihilates" $SD_\sigma.$ More precisely, let $\mathcal W$ be a totally semistable wall of $v,$ $\sigma_\pm$ be two stability conditions that are separated by $\mathcal W$ and sufficiently close. Suppose that $SD_{\sigma_+}$ is an isomorphism, then $SD_{\sigma_-} = 0$ (see \cite{BM2}, proposition 15.1). We observe that this can be generalized as follow: 

\begin{prop}\label{prop4}
Let $X$ be any $K3$ surface, $v, w$ be orthogonal Mukai vectors with $(v, v)>0, (w, w)>0$. $\mathcal W$ denotes a nonisotrpic totally semistable wall of $w$ (resp. $v$) and $\sigma_\pm$ two stability conditions separated by $\mathcal W.$ Suppose that $SD_{\sigma_+}$ is well-defined and isomorphic, then $SD_{\sigma_-}$ is also defined and moreover: 
\begin{enumerate}
    \item if $(v, w_0)\neq 0$ (resp. $(w, v_0)\neq 0$), then $SD_{\sigma_-}=0$.
    \item otherwise, $SD_{\sigma_-}$ is an isomorphism.
\end{enumerate}
\end{prop}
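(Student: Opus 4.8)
The plan is to prove the case in which $\mathcal W$ is a totally semistable wall of $w$; the case of a wall of $v$ is identical after exchanging the roles of $v$ and $w$ (and of $w_0$ and $v_0$). Fix $\sigma_\pm$ on the two sides of $\mathcal W$ and close to a generic $\sigma_0\in\mathcal W$. The first input is the structure of the wall-hitting transformation recalled in \cref{sec2}: to $\mathcal W$ there is attached an intermediate class $w_0$ with $(w_0,w_0)>0$ (here non-isotropy of $\mathcal W$ is used, keeping $M_{\sigma_0}(w_0)$ positive-dimensional), a spherical class $s=v(S)$, and an autoequivalence $\Phi$ built from (inverse) spherical twists in the corresponding spherical objects, with $\Phi_*w=w$, which induces an isomorphism in codimension one $\rho\colon M_{\sigma_-}(w)\dashrightarrow M_{\sigma_+}(w)$ (\cref{BMthm2}). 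We may assume $\mathcal W$ is not a wall for $v$, so that the $\sigma_+$- and $\sigma_-$-stable objects of class $v$ coincide; then $M_{\sigma_-}(v)=M_{\sigma_+}(v)$ and $\theta_v^{\sigma_-}(w)=\theta_v^{\sigma_+}(w)$, and in particular the two spaces $H^0(M_{\sigma_\pm}(v),\theta_v^{\sigma_\pm}(w))$ are canonically identified.

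Next I would transport the determinant bundle and the theta divisor on the $w$-factor through $\rho$. As $\Phi$ fixes $w$ it acts on $w^\perp$, and the naturality of the Mukai homomorphism (\cref{detlb}) gives $\rho^*\theta_w^{\sigma_+}(u)=\theta_w^{\sigma_-}(\Phi_*^{-1}u)$ for every $u\in w^\perp$; for $u=v$ this compares $\theta_w^{\sigma_+}(v)$ with $\theta_w^{\sigma_-}(\Phi_*^{-1}v)$. Recall that $\Theta_\sigma$ is the zero locus of the determinant section of the two-term complex $Rp_*(\mathcal E\otimes\mathcal F)$ (which has rank zero because $\chi(F,E)=-(v,w)=0$), so $(E,F)\in\Theta_\sigma$ exactly when $R\mathrm{Hom}(F,E)\neq 0$. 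For generic $F_-\in M_{\sigma_-}(w)$ put $F_+=\Phi(F_-)$ and apply $R\mathrm{Hom}(-,E)$ to the defining triangle of the spherical twist; this yields an exact triangle
\[
R\mathrm{Hom}(F_-,S)\otimes R\mathrm{Hom}(S,E)\ \to\ R\mathrm{Hom}(F_-,E)\ \to\ R\mathrm{Hom}(F_+,E).
\]
Away from $\Theta_{\sigma_+}$ the right-hand term vanishes, so generically $R\mathrm{Hom}(F_-,E)\cong R\mathrm{Hom}(F_-,S)\otimes R\mathrm{Hom}(S,E)$, in which the governing factor is $R\mathrm{Hom}(S,E)$ with $\chi(S,E)=-(s,v)$.

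The dichotomy then follows from a single lattice computation. The classes $w,w_0,s$ all lie in the rank-two lattice $\mathcal H$ of the wall, any two of them being linearly independent; since $(v,w)=0$ always holds, orthogonality of $v$ to $w_0$ is equivalent to $v\perp\mathcal H$, hence to $(v,s)=0$. If $(v,w_0)=0$, then $(v,s)=0$, so each twist composing $\Phi$ fixes $v$ and $\Phi_*^{-1}v=v$, whence $\rho^*\theta_w^{\sigma_+}(v)=\theta_w^{\sigma_-}(v)$; moreover $R\mathrm{Hom}(S,E)=0$ for generic stable $E$ (no maps in either direction by stability, together with $\chi=0$), so $R\mathrm{Hom}(F_-,E)=0$ generically and $\rho$ carries $\Theta_{\sigma_+}$ onto $\Theta_{\sigma_-}$. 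Thus, up to the codimension-one isomorphisms on the two factors, $SD_{\sigma_-}$ is the same perfect pairing as $SD_{\sigma_+}$ and is an isomorphism. If instead $(v,w_0)\neq 0$, then $(v,s)\neq 0$, so $\chi(S,E)\neq 0$ forces $R\mathrm{Hom}(S,E)\neq 0$ for every $E$; since $R\mathrm{Hom}(F_-,S)\neq 0$ for generic $F_-$, the displayed isomorphism gives $R\mathrm{Hom}(F_-,E)\neq 0$ for all pairs, i.e. $\Theta_{\sigma_-}$ is the whole product. Hence the defining section of $\theta_v^{\sigma_-}(w)\boxtimes\theta_w^{\sigma_-}(v)$ vanishes identically and $SD_{\sigma_-}=0$.

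The step I expect to be the main obstacle is extracting and controlling the precise autoequivalence $\Phi$ from Bayer--Macr\`i's classification (\cref{BM5.7}): at a general non-isotropic totally semistable wall $\Phi$ is a composition of several spherical twists, so the single triangle above must be replaced by an iterated one whose ``$S$-part'' is a complex assembled from several spherical objects $S_i$. I must then verify that when $(v,w_0)\neq 0$ at least one $S_i$ has $\chi(S_i,E)\neq 0$ with no cancellation against the others, and that when $(v,w_0)=0$ all the relevant terms vanish for generic $E$. The remaining points --- the genericity statements on $R\mathrm{Hom}(S_i,-)$ for stable $E$ and $F_-$, and the identification of the degeneracy section with the section cutting out $\Theta_\sigma$ --- are where the standing hypotheses $(v,v)>0$, $(w,w)>0$ and non-isotropy of $\mathcal W$ are needed, to keep all moduli spaces positive-dimensional and the Mukai homomorphism $w^\perp\to NS(M_{\sigma}(w))$ injective.
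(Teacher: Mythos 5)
Your overall architecture matches the paper's: the same lattice dichotomy ($(v,w_0)=0$ iff $v\perp\mathcal H_{\mathcal W}$ iff $(v,s)=0$ for every spherical class of the wall), the same use of the defining triangle of the spherical twist to compare $R\mathrm{Hom}(F_-,E)$ with $R\mathrm{Hom}(F_+,E)$, and the same honest flag that the iterated composition of twists is the technical crux (the paper handles this inductively via the surjection and injection of \cref{lemma4.2} and \cref{lemma4.3}). Your case (1) is essentially \cref{lemma4.4}, except that the paper additionally performs a sign analysis: it uses the hypothesis $SD_{\sigma_+}\neq 0$ to rule out the sign of $(v_0,w)$ that would already force $\Theta_{\sigma_+}$ to be the whole product, and only then propagates $Hom(E_0,F)\neq 0$ to the minus side. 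You should build that in, since the triangle only controls the correction term up to the distinction between $Hom$ and $Ext^1$. You also omit the verification that $SD_{\sigma_-}$ is defined at all, i.e.\ that $Ext^2$ vanishes identically on the minus side so that $\Theta_{\sigma_-}$ is cut out by a section of the expected line bundle; this is \cref{lemma4.1}.

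The genuine gap is in case (2). You assert that $R\mathrm{Hom}(S,E)=0$ for generic stable $E$ because there are ``no maps in either direction by stability'' together with $\chi(S,E)=0$. Stability only kills morphisms from the stable object of larger phase to the one of smaller phase: with $\phi_0(S)\approx\phi_0(w)>\phi_0(v)$ one gets $Hom(S,E)=0$, but $Ext^2(S,E)\cong Hom(E,S)^*$ is not forced to vanish, and the Brill--Noether locus $\{E:\ Hom(E,S)\neq 0\}$ is a determinantal divisor (of class $\theta_v(s)$, since $\chi=0$) that is in general nonempty --- these are exactly the loci that appear in the paper as the divisors $D_i^\pm$. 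Consequently $\rho$ does not carry $\Theta_{\sigma_+}$ onto $\Theta_{\sigma_-}$ on the nose; the two pullbacks of the theta divisors differ a priori by these spherical correction divisors, one family for $\sigma_+$ and one for $\sigma_-$. What the paper actually proves (\cref{lemma4.5}) is not that these divisors are empty but that $D_i^+=D_i^-$: it passes to the two common $\sigma_0$-stable Jordan--H\"older factors $T_1,T_2$ of the strictly $\sigma_0$-semistable spherical objects $S_i^\pm$ and uses $Ext^2(T_j,F)=0$ together with $\chi(T_j,F)=0$ to translate nonvanishing of $Hom$ into nonvanishing of $Ext^1$ and back. Without an argument of this kind your conclusion that $SD_{\sigma_-}$ is again an isomorphism in case (2) does not follow from what you have written.
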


\noindent Here $w_0$ is a Mukai vector appearing in the wall-hitting transformations, uniquely determined by $w$ and the wall $\mathcal W$ (see \cref{BM6.6}).

\begin{Ex}\label{ex3}
Let $X$ be the elliptic K3 with a polarization $H:=c+mf,$ $m>>0.$ Suppose $v, w$ is a pair of Mukai vectors with $(v, w)=0.$ Denote $r, s$ the rank of $v, w$ respectively. Suppose in addition that: 
\begin{enumerate}[(i).]
    \item $r, s \geq 3;$
    \item $c_1(v).f=1,$ $c_1(w).f=1;$

\item $(v,v)+(w,w)=2(r+s)(r+s-2).$
\end{enumerate}
Then, there is a stability condition $\sigma'$ such that $$SD_{\sigma'}: H^0(M_\sigma(v), \theta_v^{\sigma'}(w)) \to H^0(M_\sigma(w), \theta_w^{\sigma'}(v))^*$$ is isomorphic. As we will see, this $\sigma'$ is separated from the place where $SD_\sigma = SD$ by a totally semistable wall, and consequently $SD: H^0(M_H(v), \theta_v(w)) \to H^0(M_H(w), \theta_w(v))^*$ is the zero morphism, by \cref{prop4}. 
\end{Ex}

Note that in this example, the quantity $(v, v)+(w,w)$ goes beyond the lower bound in condition $(iii)$ of \cref{SD}. The price we pay is a wall-crossing.

The rest of this paper is organized as following: in \cref{sec2} we review some preliminaries, in \cref{sec3} we prove \cref{thm0} and in \cref{sec4} we justify the remarks about strange duality.

\subsection*{Acknowledgement.} I would like to thank Aaron Bertram for his advices that make this preprint possible, and for his constant support and encouragement. I am also indebted to Emanuele Macr\`i who introduced me to the problem and taught me about the subject. This research was partially supported by NSF FRG grant DMS-1663813.

\section{Review on stability conditions and wall-crossing for moduli.} \label{sec2} 
We collect here some preliminaries, and introduce Bayer and Macr\`i's work on wall-crossing for moduli of complexes on K3 surfaces \cite{BM2}. Throughout this section, $X$ is any K3 surface.

\subsection{Mukai lattice of $X$.} 
\begin{dfn}
The Mukai lattice of $X$ is $H^*_{alg}(X):=H^0(X)\oplus H^{1,1}(X, \mathbb{Z})\oplus H^2(X)$ with the Mukai pairing $((v_0, v_2, v_4), (w_0, w_2, w_4)):= v_2 w_2 - v_0 w_4 - w_0 v_0.$ We call an element in this lattice a Mukai vector.
\end{dfn}

\begin{dfn}
Given an object $E\in D^b(X)$ in the derived category of coherent sheaves on $X,$ define its Mukai vector $v(E):=ch(E)\sqrt{td(X)}\in H^*_{alg}(X),$ where $ch(E)$ is the Chern character of $E,$ and $td(X)=(1, 0, 2)$ is the Todd class of $X.$
\end{dfn}

\noindent Note that this defines an additive map $v(-): D^b(X) \to H^*_{alg}(X).$ 

\begin{prop}[e.g. \cite{Huy1}, Chapter 5]
\begin{enumerate}[(a)]
    \item Given $E, F\in D^b(X),$ then $$-\chi(E, F):=\sum_i (-1)^{(i-1)}ext^i(E, F) = (v(E), v(F)).$$ 

\item Suppose that $\Phi_P : D^b(X)\to D^b(Y)$ is a Fourier-Mukai transformation with kernel $P\in D^b(X\times Y),$ and $\Phi_{v(P)}: H^*_{alg}(X) \to H^*_{alg}(Y)$ is the correspondence, then the following diagram commutes:

{\centering
\begin{tikzcd}
D^b(X) \arrow[d, "v(-)"] \arrow[r, "\Phi_P"] & D^b(Y) \arrow[d, "v(-)"] \\
H^*_{alg}(X) \arrow[r, "\Phi_{v(P)}"] & H^*_{alg}(Y)
\end{tikzcd}
\par}

\end{enumerate}
\end{prop}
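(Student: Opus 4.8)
The plan is to derive both parts from Grothendieck--Riemann--Roch, with the K3 hypothesis entering only through the explicit value $td(X)=(1,0,2)$, hence $\sqrt{td(X)}=(1,0,1)$.

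For part (a), I would apply Hirzebruch--Riemann--Roch to the complex $E^\vee\otimes^L F$, whose Euler characteristic is $\chi(E,F)=\sum_i(-1)^i ext^i(E,F)$. Using $ch(E^\vee)=ch(E)^\vee$, where $(-)^\vee$ denotes the involution on $H^*_{alg}(X)$ acting by $(-1)^k$ on the degree-$2k$ part (so it negates the $H^{1,1}$ component and fixes $H^0$ and $H^4$), this gives
$$\chi(E,F)=\int_X ch(E)^\vee\, ch(F)\, td(X).$$
The key point is that $\sqrt{td(X)}=(1,0,1)$ is fixed by the involution $(-)^\vee$, so I may split the Todd class symmetrically:
$$\int_X ch(E)^\vee ch(F)\, td(X)=\int_X\big(ch(E)\sqrt{td(X)}\big)^\vee\big(ch(F)\sqrt{td(X)}\big)=\int_X v(E)^\vee v(F).$$
A direct comparison of components then shows $\int_X v^\vee w=-(v,w)$ for the Mukai pairing as defined, so $(v(E),v(F))=-\chi(E,F)=\sum_i(-1)^{i-1}ext^i(E,F)$, as claimed.

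For part (b), write $\Phi_P(E)=Rp_{Y*}(P\otimes^L Lp_X^*E)$ and set $\Phi_{v(P)}(\alpha):=p_{Y*}\big(p_X^*(\alpha)\cdot v(P)\big)$ with $v(P)=ch(P)\sqrt{td(X\times Y)}$. I would apply GRR to the projection $p_Y$, whose relative tangent bundle is $p_X^*T_X$, together with $td(X\times Y)=p_X^* td(X)\cdot p_Y^* td(Y)$ and $ch(P\otimes^L Lp_X^*E)=ch(P)\,p_X^* ch(E)$. After the projection formula extracts the factor $p_Y^* td(Y)$, this yields
$$ch(\Phi_P(E))\, td(Y)=p_{Y*}\big(ch(P)\, p_X^*(ch(E)\, td(X))\big)\cdot td(Y).$$
Cancelling $td(Y)$, multiplying by $\sqrt{td(Y)}$, and redistributing the square-root Todd classes onto the two factors identifies the result with $p_{Y*}\big(p_X^* v(E)\cdot v(P)\big)=\Phi_{v(P)}(v(E))$, which is the commutativity of the square.

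Both arguments are routine once the Riemann--Roch theorems are available; the only real work is in the bookkeeping. In (a) I would take care that the duality involution $(-)^\vee$ is compatible with the self-dual class $\sqrt{td(X)}$ and that the sign conventions in the Mukai pairing match $-\int_X v^\vee w$. In (b) the step I would check most carefully---and the main potential obstacle---is the correct handling of the square-root Todd classes: one must verify that the relative Todd class of $p_Y$ is exactly $p_X^* td(X)$ and that $\sqrt{td(X\times Y)}$ splits as $p_X^*\sqrt{td(X)}\cdot p_Y^*\sqrt{td(Y)}$, so that the natural definition $v(P)=ch(P)\sqrt{td(X\times Y)}$ makes the diagram commute with no correction terms.
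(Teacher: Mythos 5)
The paper does not prove this proposition at all---it is quoted from the cited reference (Huybrechts, Chapter 5), where the argument is exactly the Hirzebruch--Riemann--Roch computation for $R\mathcal{H}om(E,F)$ in part (a) and the Grothendieck--Riemann--Roch computation along $p_Y$ with the symmetric splitting of $\sqrt{td}$ in part (b). Your proposal reproduces that standard argument correctly (including the sign check $\int_X v^\vee w = -(v,w)$ and the factorization $\sqrt{td(X\times Y)}=p_X^*\sqrt{td(X)}\cdot p_Y^*\sqrt{td(Y)}$), so there is nothing to add.
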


\subsection{Bridgeland stability conditions and moduli of complexes.}
\begin{dfn}
A Bridgeland stability condition on $D^b(X)$ is a pair $(Z, \mathcal A),$ where $Z: H^*_{alg}(X)\rightarrow \mathbb C$ is a group homomorphism and $\mathcal A$ is a heart of a bounded t-structure of $D^b(X),$ satisfying\ all\  the\  following\  conditions:
\begin{enumerate}[(1)]
\item For any nonzero object $E \in \mathcal A,$ $Z(E):=Z(v(E))= r(E)e^{i\pi\phi(E)}, $ then $r(E)>0$ and $\phi(E)\in (0,1].$ ($\phi(E)$ is called the phase of $E$, and it defines a notion of semistability: an object $E\in \mathcal A$ is semistable if for any nonzero subobject $F\hookrightarrow E$, $\phi(F)\leq \phi(E).$)
\item For any object $E\in \mathcal A,$ E has a Harder-Narasimhan filtration $$0\hookrightarrow E_1 \hookrightarrow \ldots \hookrightarrow E_{n-1} \hookrightarrow E_n= E ,$$ such that quotient objects $A_i\cong E_i/E_{i-1}$ are semistable with decreasing phases, i.e. $\phi(A_i)>\phi(A_{i+1})$ for $i=1, 2, \ldots n.$

\item For a given norm $\| \cdot \|$ on $H^*_{alg}(X)$, there exists a constant real number $C>0$ such that $$|Z(E)|<C\|v(E)\|$$ for every semistable object in $E\in \mathcal A.$
\end{enumerate}
\end{dfn}

\begin{dfn}
An object $E\in D^b(X)$ is $\sigma$-semistable if $E[i]\in \mathcal A$ is semistable.
\end{dfn}

\begin{Ex}[\cite{Bri2}, Proposition 7.1]\label{ex2.1}
Given $\beta, \omega \in NS(X)\otimes \mathbb R$ with $\omega$ ample, T. Bridgeland shows that a stability condition $\sigma_{\beta, \omega}$ can be constructed as follow: define $$Z_{\beta, \omega}:=(exp(\beta+i\omega), -),$$ where (- , -) is the Mukai paring, and $exp(\beta+i\omega):=(1, \beta, \frac{\beta^2-\omega^2}{2}) + i(0, \omega, \beta\omega).$
Also define two additive subcategories of the abelian category of coherent sheaves  $\mathcal T:=\{E\in Coh(X):\mu_{\omega}(E/E_{tor})\geq \beta \omega \},$ and $\mathcal F:=\{E\in Coh(X)\ torsion\ free:\mu_{\omega}(E)< \beta \omega \},$ where $\mu_\omega(E):=\frac{c_1(E).\omega}{r(E)}.$ Then the tilt category $\mathcal A_{\beta, \omega}$ with respect to $\mathcal T, \mathcal F$, namely, $$\mathcal A_{\beta, \omega}:=\{E\in D^b(X): \mathcal H^{-1}(E)\in \mathcal F, \mathcal H^{0}(E)\in \mathcal T, \mathcal H^{i\neq -1, 0}(E)\cong 0\}$$ is a heart of a bounded t-structure of $D^b(X),$ and moreover, the pair $(Z_{\beta, \omega}, \mathcal A_{\beta,\omega})$ is Bridgeland stability condition.
\end{Ex}

\noindent Let $Stab(X)$ denote the set of all Bridgeland stability conditions on $D^b(X).$

\begin{thm}[\cite{Bri1}, Theorem 1.2]
$Stab(X)$ admits a complex manifold structure of dimension 2+$\rho(X)$, where $\rho(X)$ is the Picard rank of $X.$  
\end{thm}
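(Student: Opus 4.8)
The plan is to establish the result as an instance of Bridgeland's deformation theorem, with the dimension count reading off the rank of the Mukai lattice. First I would equip $Stab(X)$ with the coarsest topology making the assignments $\sigma \mapsto Z_\sigma$ and, for each fixed object $E$, the maps recording the top and bottom phases $\phi^+_\sigma(E)$ and $\phi^-_\sigma(E)$ appearing in the Harder--Narasimhan filtration of $E$ continuous; equivalently, one introduces the generalized metric on $Stab(X)$ measuring how far apart two slicings and their central charges are. The central object of study is then the forgetful map
\begin{equation*}
\mathcal{Z}: Stab(X) \to \operatorname{Hom}(H^*_{alg}(X), \mathbb{C}), \qquad (Z, \mathcal{A}) \mapsto Z,
\end{equation*}
and the goal is to show $\mathcal{Z}$ is a local homeomorphism onto an open subset, which then transports the linear-space structure of the target back to $Stab(X)$. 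Note that because a stability condition here is defined with central charge $Z$ a homomorphism out of $H^*_{alg}(X)$, the target is automatically this finite-dimensional space rather than $\operatorname{Hom}(K(X),\mathbb{C})$.

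The heart of the argument is the local statement: fixing $\sigma = (Z, \mathcal{A})$, any group homomorphism $W$ sufficiently close to $Z$ should be the central charge of a unique stability condition $\tau$ near $\sigma$. I would prove this by deforming the slicing rather than the heart: for $W$ close to $Z$ the phases of the $\sigma$-semistable objects move only slightly, and one reassembles them into a new slicing compatible with $W$. Injectivity and surjectivity of $\mathcal{Z}$ on a small neighborhood, together with continuity of the inverse, then follow from the uniqueness of Harder--Narasimhan filtrations and the control the support property gives over the location of semistable classes. This is where condition (3) in the definition of a stability condition, the inequality $|Z(E)| < C\|v(E)\|$, is indispensable: it prevents phases from accumulating and guarantees that only finitely many classes can become destabilized under a small perturbation, so that the deformed slicing is genuinely locally finite and the map $\mathcal{Z}$ is open.

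Once $\mathcal{Z}$ is known to be a local homeomorphism onto an open subset of $\operatorname{Hom}(H^*_{alg}(X), \mathbb{C})$, the complex manifold structure is immediate, and the dimension is simply $\dim_{\mathbb{C}} \operatorname{Hom}(H^*_{alg}(X), \mathbb{C}) = \operatorname{rank} H^*_{alg}(X)$. Since $H^*_{alg}(X) = H^0(X) \oplus H^{1,1}(X, \mathbb{Z}) \oplus H^4(X)$ with $H^0$ and $H^4$ each of rank one and $H^{1,1}(X, \mathbb{Z}) = NS(X)$ of rank $\rho(X)$, this rank equals $2 + \rho(X)$, as claimed. The main obstacle is the deformation/local-homeomorphism step: verifying that the inverse of $\mathcal{Z}$ exists and is continuous requires the delicate estimate that semistable objects deform continuously along small changes of the central charge, and it is precisely here that the support property must be invoked to keep the reconstruction of the slicing under control.
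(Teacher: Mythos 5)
The paper offers no proof of this statement: it is quoted verbatim as Theorem 1.2 of Bridgeland's work \cite{Bri1} and used as a black box, so there is nothing internal to compare your argument against. That said, your outline faithfully reproduces Bridgeland's actual strategy: topologize $Stab(X)$ via the generalized metric on slicings and central charges, show the forgetful map $\mathcal{Z}:\sigma\mapsto Z_\sigma$ is a local homeomorphism onto an open subset of $\operatorname{Hom}(H^*_{alg}(X),\mathbb{C})$ by deforming the slicing along a small perturbation of $Z$, invoke the support property to keep the perturbed phases from accumulating, and read off the dimension as $\operatorname{rank} H^*_{alg}(X)=1+\rho(X)+1$. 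Your remark that the paper's definition of a stability condition (central charge factoring through $H^*_{alg}(X)$, plus condition (3)) builds in exactly the finiteness needed for the deformation argument is the right observation. The only caveat is that the heart of the matter --- the quantitative deformation theorem showing that a homomorphism $W$ with $\|W-Z\|$ small enough admits a unique compatible slicing close to the original one --- is asserted rather than carried out; that step occupies most of Bridgeland's paper and cannot be compressed into the sentence you give it. As a roadmap your proposal is correct and complete in structure; as a proof it defers the essential estimate to the reference, which is precisely what the paper itself does.
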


\noindent Let $Stab^\dag(X)$ denote the connected component of $Stab(X)$ containing $\sigma_{\beta, \omega}$ (defined in \cref{ex2.1}). The following result is well-known.

\begin{prop}
Given a Mukai vector $v,$ there exists a locally finite set of walls (real codimension 1 subsets) of $stab^\dag(X),$ such that
\begin{enumerate}
\item when $\sigma$ varies within a chamber, the set of $\sigma-$semistable objects  does not change;
\item when $\sigma$ cross a wall, the set of $\sigma-$semistable objects changes.
\end{enumerate}
\end{prop}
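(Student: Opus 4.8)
The plan is to establish the standard wall-and-chamber decomposition by combining the holomorphic dependence of the central charge on $\sigma$ (Bridgeland's deformation theorem, \cite{Bri1}) with a boundedness argument supplied by the support-type bound in condition (3) of the definition of a stability condition. First I would make the notion of wall numerically precise: declare that $\sigma=(Z,\mathcal A)$ lies on a potential wall for $v$ exactly when there is a proper nonzero class $v'$, arising as the Mukai vector of a $\sigma$-semistable subobject of a $\sigma$-semistable object of class $v$, whose central charge is phase-aligned with that of $v$, i.e. $Z(v')\in \mathbb R_{>0}\,Z(v)$. Writing this out, the condition is the single real equation $\mathrm{Im}\!\left(Z(v')\overline{Z(v)}\right)=0$.

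Next I would fix one candidate destabilizing class $v'$ and study the locus
$$
\mathcal W_{v'}:=\left\{\,\sigma=(Z,\mathcal A)\in Stab^\dag(X)\ :\ \mathrm{Im}\!\left(Z(v')\overline{Z(v)}\right)=0\,\right\}.
$$
Since $\sigma\mapsto Z$ is a local biholomorphism onto an open subset of $\mathrm{Hom}(H^*_{alg}(X),\mathbb C)$ by \cite{Bri1}, the function $\sigma\mapsto \mathrm{Im}(Z(v')\overline{Z(v)})$ is real-analytic and nonconstant whenever $v$ and $v'$ are linearly independent, so its zero locus $\mathcal W_{v'}$ is a real-codimension-one real-analytic subset. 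These are the candidate walls, one for each relevant class.

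The crux is local finiteness, and I expect it to be the main obstacle: I must show that over any compact subset $K\subset Stab^\dag(X)$ only finitely many classes $v'$ can occur, so that $\bigcup_{v'}\mathcal W_{v'}$ is a locally finite union. The key estimate is that at a point where an object $E$ of class $v$ and phase $\phi$ is strictly semistable, all its Jordan--Hölder factors have phase $\phi$ and their masses sum to $|Z(v)|$; hence any destabilizing factor class $v'$ satisfies $|Z(v')|\le |Z(v)|$. The support property, made uniform on $K$ by continuity of $Z$ and of the norm $\|\cdot\|$, then bounds $\|v'\|$ in terms of $\sup_{K}|Z(v)|$. Because $H^*_{alg}(X)$ is a discrete lattice, only finitely many $v'$ meet such a bound. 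The genuine difficulty here is producing a \emph{single} support constant valid over all of $K$: this requires the (quadratic-form) support property to be preserved under deformation of $\sigma$, which is precisely what upgrades Bridgeland's pointwise bound to a wall-and-chamber statement.

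Finally I would assemble the structure. Define the chambers to be the connected components of the complement of the locally finite union of the $\mathcal W_{v'}$. Within a chamber no phase alignment among the finitely many relevant classes occurs, and combined with openness of the semistable locus (again a consequence of the deformation results of \cite{Bri1}, see also \cite{BM2}), the set of $\sigma$-semistable objects of class $v$ is locally constant, hence constant on each connected chamber; this gives part (1). For part (2) I would retain only those $\mathcal W_{v'}$ along which a class $v'$ is actually realized by a semistable subobject, discarding the numerical ``fake walls'' where the semistable set does not change; on a K3 surface the existence results for semistable objects in \cite{BM2} guarantee that across such a genuine wall the set of $\sigma$-semistable objects indeed jumps.
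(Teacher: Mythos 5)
Your proposal is correct and follows the standard argument; the paper itself gives no proof of this proposition, deferring to \cite[Proposition 2.3]{BM1}, whose proof (going back to Bridgeland and Toda) is essentially the sketch you give: numerical potential walls $\mathrm{Im}(Z(v')\overline{Z(v)})=0$, a mass bound $|Z(v')|\le |Z(v)|$ on Jordan--H\"older factors, the support property to convert this into a bound on $\|v'\|$ uniform over compacta, and discreteness of the lattice. One small caveat: your local-finiteness step uses the support property in the form $\|v'\|\le C|Z(v')|$, which is the standard (and correct) formulation, whereas condition (3) of the paper's definition of a stability condition states the inequality in the opposite (vacuous) direction $|Z(E)|<C\|v(E)\|$ --- a typo in the paper rather than a gap in your argument.
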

\noindent See e.g. \cite[Proposition 2.3]{BM1}.

\begin{dfn}\label{slice}
Given an ample divisor $H,$ we get a two dimensional slice of $Stab^\dag(X)$ defined as $P_H:=\{\sigma_{uH, tH}: u\in \mathbb R, t>0\}.$
\end{dfn} 

The wall-and-chamber structure on $P_H$ is particularly neat:
\begin{lemma}[\cite{ABCH}, Section 6]
Given a Mukai vector $v,$ then set of walls for $v$ intersects $P_H$ at either semicircles or vertical rays, and they do not intersect each other.
\end{lemma}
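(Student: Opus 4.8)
The plan is to establish the claim about the wall-and-chamber structure on the slice $P_H$ in two stages: first showing that each wall intersects $P_H$ in either a semicircle (with center on the $u$-axis) or a vertical ray, and second showing that distinct walls cannot cross inside $P_H$. Throughout I parametrize points of $P_H$ by $(u,t)$ with $\sigma_{uH,tH}$ corresponding to $\beta = uH$ and $\omega = tH$. The computation rests on writing down the central charge $Z_{uH,tH}$ explicitly via the formula in \cref{ex2.1}: using $\exp(\beta+i\omega) = (1, uH, \tfrac{(u^2-t^2)H^2}{2}) + i(0, tH, utH^2)$, one finds that for a Mukai vector $a = (a_0, a_2, a_4)$ the real and imaginary parts of $Z_{uH,tH}(a)$ are explicit quadratic polynomials in $u$ and $t$.

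First I would derive the equation of a wall. A wall for $v$ is the locus where some potential destabilizing class $a$ satisfies $\phi_\sigma(a) = \phi_\sigma(v)$, equivalently where $Z_{uH,tH}(a)$ and $Z_{uH,tH}(v)$ become real-linearly dependent, i.e. $\operatorname{Im}\!\big(Z(a)\overline{Z(v)}\big) = 0$. Substituting the explicit formulas for the central charge, this vanishing condition becomes a polynomial equation in $(u,t)$; the key observation is that after expanding, the coefficients of the highest-degree terms in $t$ coming from $\operatorname{Re}$ and $\operatorname{Im}$ conspire so that the equation takes the form of a circle $u^2 + t^2 + \alpha u + \beta = 0$ (with center $(-\alpha/2, 0)$ on the $u$-axis) or, in the degenerate case where the leading coefficient vanishes, a vertical line $u = \text{const}$. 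Intersecting with the half-plane $t>0$ gives a semicircle or a vertical ray respectively. The reason the center always lands on the $u$-axis is that both $\beta$ and $\omega$ are proportional to the single class $H$, which collapses the relevant bilinear expressions into functions of $u,t$ and the intersection numbers $a_i \cdot H$, $H^2$ only.

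Second I would prove non-intersection. The standard argument is that two walls crossing at an interior point $(u_0,t_0)$ would force two independent destabilizing classes $a, a'$ to have the same phase as $v$ simultaneously at $\sigma_0 = \sigma_{u_0 H, t_0 H}$; then the three classes $v, a, a'$ all have central charges lying on a common ray in $\mathbb{C}$, so they span a rank-$\le 2$ sublattice on which $Z_{\sigma_0}$ is (up to scalar) real-valued. One then shows this is incompatible with the signature of the Mukai pairing restricted to that sublattice unless the classes are proportional, so $a$ and $a'$ define the same wall. Concretely, I would invoke the numerical characterization of walls in $P_H$ in terms of the rank-two hyperbolic sublattice they determine, as in the cited \cite{ABCH} analysis, and argue that each semicircle/ray is cut out by a fixed such sublattice with the geometry (center, radius) determined by the discriminant.

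The main obstacle I anticipate is the non-intersection claim rather than the shape of individual walls: showing that two genuinely distinct numerical walls cannot meet in the open slice requires controlling the interplay between the central charge and the lattice signature, and it is here that one must rule out accidental coincidences at an interior point. The shape computation, by contrast, is a direct though somewhat tedious expansion of the quadratic central charge. Since the statement is attributed to \cite{ABCH}, I expect the cleanest route is to reduce both parts to the rank-two sublattice picture and quote the established structure theory there, rather than to re-derive the half-plane geometry from scratch.
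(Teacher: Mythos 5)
The paper does not prove this lemma at all --- it is quoted verbatim from \cite{ABCH} with no argument supplied --- so there is nothing internal to compare against; what follows is an assessment of your sketch on its own terms. Your first stage is correct and is the standard computation: writing $d_a = H\cdot a_2$, $h = H^2$, one gets $\operatorname{Re}Z(a) = u d_a - a_4 - \tfrac{a_0 h}{2}(u^2-t^2)$ and $\operatorname{Im}Z(a) = t(d_a - a_0 u h)$, and in $\operatorname{Im}\bigl(Z(a)\overline{Z(v)}\bigr)=0$ the cubic terms in $(u,t)$ cancel while the quadratic part collapses to $\tfrac{h}{2}(a_0 d_v - v_0 d_a)(u^2+t^2)$, giving $A(u^2+t^2)+Bu+C=0$: a circle centered on the $u$-axis when $A\neq 0$ and a vertical line when $A=0$. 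That is exactly the mechanism you describe.

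Your second stage identifies the right reduction but mislabels the mechanism. Non-intersection is \emph{not} a signature obstruction on the Mukai pairing (two circles centered on the $u$-axis can perfectly well meet at a point with $t>0$, so some input beyond the shape is needed, and it is not the hyperbolicity of $\mathcal H_{\mathcal W}$). The correct argument is linear-algebraic: on $P_H$ the central charge of $a$ depends only on the triple $(a_0, H\cdot a_2, a_4)$, i.e.\ on the projection $\bar a$ of $a$ to a rank-three lattice $\Lambda$, and the wall $W_a$ as a subset of $P_H$ depends only on the two-plane $\operatorname{span}(\bar a,\bar v)\subset \Lambda\otimes\mathbb R$. At any fixed $\sigma_0=\sigma_{u_0H,t_0H}$ the map $\Lambda\otimes\mathbb R\to\mathbb C$ is surjective with one-dimensional kernel, so $Z_{\sigma_0}^{-1}(\mathbb R\, Z_{\sigma_0}(v))$ is a single two-plane $P$ containing $\bar v$. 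If two walls $W_a$, $W_{a'}$ met at $\sigma_0$, then $\bar a,\bar a'\in P$, hence $\operatorname{span}(\bar a,\bar v)=P=\operatorname{span}(\bar a',\bar v)$ and the two walls coincide. This is presumably what you mean by ``reduce to the rank-two sublattice picture,'' and quoting \cite{ABCH} for it is legitimate, but as written the appeal to ``the signature of the Mukai pairing restricted to that sublattice'' would not close the argument; note also that this projection step is exactly why the statement is specific to the slice $P_H$ --- in the full rank-four lattice $H^*_{alg}(X)$ two distinct rank-two sublattices containing $v$ can sit inside the three-dimensional preimage $Z_{\sigma_0}^{-1}(\mathbb R\,Z_{\sigma_0}(v))$, and genuine walls of $v$ in $\mathrm{Stab}^\dag(X)$ do intersect.
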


Fix $v$ as above, we say a stability condition $\sigma$ is generic with respect to $v$ if it does not lie on a wall of $v$. The existence of moduli of $\sigma$-semistable objects of class $v$ as a scheme was proved in \cite{BM1}, based on a result of Y. Toda \cite{Toda}.
\begin{thm}[\cite{BM1}, Theorem 1.3]
Assume that $\sigma$ is generic with respect to $v.$ The set of $\sigma-$semistable objects in the heart $\mathcal A_{\sigma}$ of class $v$ forms a (coarse) moduli space, denoted by $M_{\sigma}(v).$ It is an irreducible normal projective variety. Moreover, there is an open subset $M_{\sigma}^{st}(v)$ (possibly empty) of $M_{\sigma}(v)$ which parametrizes $\sigma-$stable objects; when $v$ is primitive in $H^*_{alg}(X),$ $M_{\sigma}(v)=M_{\sigma}^{st}(v)$ is smooth and projective.  
\end{thm}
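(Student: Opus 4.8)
The plan is to construct $M_\sigma(v)$ in stages: first realize the moduli problem as an algebraic stack of finite type, then anchor the geometry at the large-volume limit where Bridgeland stability reduces to Gieseker stability, and finally transport normality, irreducibility and projectivity across the space of stability conditions to an arbitrary generic $\sigma$.

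First I would show that the stack of $\sigma$-semistable objects of class $v$ is algebraic and of finite type. The stack of all objects of $D^b(X)$ is algebraic and locally of finite type by the general results of Lieblich and Inaba; the two inputs special to Bridgeland stability are openness of semistability in families -- which follows from the existence of relative Harder--Narasimhan filtrations -- and boundedness of the family of $\sigma$-semistable objects of fixed class $v$. Boundedness is the first genuinely technical point: one uses the support property (condition $(3)$ of the definition of a stability condition) together with an induction on the central charge to control the Jordan--H\"older factors, ultimately reducing to the classical boundedness of Gieseker-semistable sheaves.

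Next I would anchor the construction at the large-volume chamber. For $\sigma_{\beta,tH}$ with $t\gg 0$, a standard comparison shows that $\sigma$-stability of objects of class $v$ coincides with (twisted) Gieseker stability of sheaves, so that $M_\sigma(v)$ is the classical moduli space of sheaves. By the work of Mukai, O'Grady and Yoshioka this is an irreducible normal projective variety, smooth of dimension $(v,v)+2$ when $v$ is primitive, and carrying the Mukai symplectic form. The remaining task is to propagate these properties to a general generic $\sigma$ in $Stab^\dag(X)$.

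Properness and projectivity supply the propagation, and projectivity is where I expect the main obstacle to lie. Completeness follows from a Langton-type semistable-reduction statement for Bridgeland stability: a family of $\sigma$-semistable objects over a punctured disc extends across the puncture, verifying the valuative criterion for the good moduli space. Projectivity is then obtained from the Positivity Lemma: one attaches to $\sigma$ a numerically defined (Bayer--Macr\`i) line bundle $\ell_\sigma$ on $M_\sigma(v)$, proves it nef by pairing the central charge against families of curves, and proves it strictly positive on any curve whose members are pairwise non-$S$-equivalent, hence ample. This is the technical heart of the argument, requiring precise control of the variation of the central charge in families together with a deformation-theoretic input to exclude degeneracy of $\ell_\sigma$ along a curve. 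Finally, in the primitive case genericity of $\sigma$ forces every semistable object to be stable, so $M_\sigma(v)=M_\sigma^{st}(v)$; smoothness is the Mukai computation that $Ext^2(E,E)\cong Hom(E,E)^*=\mathbb{C}$ for stable $E$, which annihilates the trace-free obstructions, while normality and irreducibility in general descend from the seeded Gieseker case through the wall-crossing, using the deformation-equivalence of all these moduli spaces to a Hilbert scheme of points.
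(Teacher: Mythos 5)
This statement is imported verbatim from \cite{BM1} (Theorem 1.3); the paper gives no proof of it, so there is no internal argument to compare your proposal against, and I can only measure it against the proof in the literature. As a roadmap your proposal matches that proof in outline: algebraicity and finite-typeness of the stack (Lieblich, Inaba, and \cite{Toda} for openness and boundedness of semistability), reduction to Gieseker moduli at the large-volume limit via Bridgeland's comparison, the Bayer--Macr\`i nef divisor $\ell_\sigma$ and the Positivity Lemma for projectivity, and Mukai's computation $Ext^2(E,E)\cong Hom(E,E)^*\cong \mathbb C$ for smoothness in the primitive case. But it is a plan rather than a proof: the three steps you yourself flag as the technical heart (boundedness of the $\sigma$-semistable objects of class $v$, a Langton-type semistable reduction for Bridgeland stability, and strict positivity of $\ell_\sigma$ on curves of pairwise non-$S$-equivalent objects) are exactly where all of the work lies, and none of them is supplied.

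One step as stated would not go through. You propose to transport irreducibility and normality from the Gieseker chamber ``through the wall-crossing.'' The wall-crossing birationality statements (quoted in this paper as \cref{BMthm1}, from \cite{BM2}) are themselves proved using the theorem you are trying to establish, so that direction of the argument is circular. What \cite{BM1} actually does is avoid wall-crossing at this stage: by results of Yoshioka and Minamide--Yanagida--Yoshioka, for every generic $\sigma$ there is a Fourier--Mukai equivalence carrying the $\sigma$-semistable objects of class $v$ to the (twisted) Gieseker-semistable sheaves of a transformed class, so that projectivity, properness, irreducibility and normality are inherited in one stroke from the classical theory of moduli of sheaves, with the Positivity Lemma then making the resulting ample class intrinsic to $\sigma$. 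To keep your architecture you would have to either prove that Fourier--Mukai identification, or establish properness and projectivity directly (semistable reduction plus the Positivity Lemma) before invoking anything about walls.
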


These moduli of complexes should be viewed as variants of moduli of (twisted) Gieseker stable torsion-free sheaves, according to the following proposition by T. Bridgeland.

\begin{prop}[\cite{Bri2}, Proposition 14.2]\label{prop2.2.6}
Given $\beta, \omega \in NS(X)\otimes \mathbb R$ with $\omega$ ample, and $E\in D^b(X)$ with positive rank and the imaginary part $Im(Z(E))>0$, then $E$ is of $\sigma_{\beta, t\omega}$-semistable for all $t$ sufficiently large if and only if $E$ is the shift of a $(\beta, \omega)-$Gieseker semistable torsion-free sheaf.
\end{prop}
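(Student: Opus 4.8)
The plan is to understand the $t\to\infty$ asymptotics of the central charge $Z_{\beta,t\omega}$ and to show that the resulting limiting order on phases is exactly the lexicographic order on reduced (twisted) Hilbert polynomials that defines $(\beta,\omega)$-Gieseker stability. First I would record the explicit form of the central charge. Writing $v(E)=(r,D,s)$ with $r>0$, a direct substitution of $\omega\mapsto t\omega$ into the formula of \cref{ex2.1} gives
\begin{equation*}
Z_{\beta,t\omega}(E)=\left(\tfrac{r\omega^2}{2}\,t^2+\beta D-s-\tfrac{r\beta^2}{2}\right)+i\,t\,\omega\cdot(D-r\beta).
\end{equation*}
Thus $\mathrm{Re}\,Z_{\beta,t\omega}(E)$ grows like $t^2$ with positive leading coefficient while $\mathrm{Im}\,Z_{\beta,t\omega}(E)$ is linear in $t$; in particular every positive-rank object with $\mathrm{Im}\,Z>0$ has phase tending to $0$ as $t\to\infty$. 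I would also note that the tilt $\mathcal A_{\beta,t\omega}$ is independent of $t$, because the torsion pair $(\mathcal T,\mathcal F)$ only depends on the sign of $\mu_\omega-\beta\omega$, which is unchanged under $\omega\mapsto t\omega$; write $\mathcal A:=\mathcal A_{\beta,\omega}$.

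The core of the argument is an asymptotic dictionary between phases and slopes. Comparing two positive-rank classes with $\mathrm{Im}\,Z>0$, the inequality $\phi_t(E_1)\le\phi_t(E_2)$ is, for $t\gg0$, equivalent to the sign of a quadratic polynomial in $t$ whose leading coefficient is a positive multiple of $\mu_\omega(E_1)-\mu_\omega(E_2)$. Hence for large $t$ the phase order is governed first by the $\mu_\omega$-slope; when the $\mu_\omega$-slopes agree, the leading coefficient vanishes and the order is decided by the next coefficient, which one computes to be exactly the $\beta$-twisted secondary term ($\mathrm{ch}_2/r$ up to a $\beta$-twist) comparing reduced Hilbert polynomials. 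This reproduces the lexicographic order defining $(\beta,\omega)$-Gieseker (semi)stability.

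Granting the dictionary, the two directions are routine translations. For the ``if'' direction, let $F$ be $(\beta,\omega)$-Gieseker semistable and torsion-free; the hypothesis $\mathrm{Im}\,Z>0$ forces $\mu_\omega(F)>\beta\omega$, so $F\in\mathcal T\subseteq\mathcal A$. A destabilizing subobject $A\hookrightarrow F$ in $\mathcal A$ satisfies $\mathcal H^{-1}(A)\hookrightarrow\mathcal H^{-1}(F)=0$, so $A$ is a sheaf and the cohomology sequence of the defining triangle exhibits $A$ as an extension, by a sheaf in $\mathcal F$, of a genuine subsheaf $A'\subseteq F$. Since the low-slope kernel only drags $\mu_\omega$ downward, the asymptotic inequality $\phi_t(A)>\phi_t(F)$ forces either $\mu_\omega(A')>\mu_\omega(F)$, contradicting $\mu_\omega$-semistability of $F$, or equality of slopes with a strictly worse secondary invariant, contradicting Gieseker semistability. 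For the ``only if'' direction, given $E$ of positive rank with $\mathrm{Im}\,Z>0$ that is $\sigma_{\beta,t\omega}$-semistable for all $t\gg0$, I would first rule out $\mathcal H^{-1}(E)\neq0$: a nonzero $\mathcal H^{-1}(E)\in\mathcal F$ yields the subobject $\mathcal H^{-1}(E)[1]\hookrightarrow E$ whose central charge lies in the second quadrant, so its phase tends to $1$ and destabilizes $E$. Hence $E$ is a sheaf; a torsion subsheaf would similarly destabilize $E$ (its phase tends to $\tfrac12$ or $1$), so $E$ is torsion-free, and Gieseker semistability follows by applying the same slope comparison to subsheaves.

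The genuine difficulty is \emph{uniformity in $t$}: a priori $F$ could be destabilized along a sequence $t_n\to\infty$ by subobjects of ever-changing Mukai vector, so that no single threshold $t_0$ works and the limiting comparison never takes hold. To close this I would combine the local finiteness of walls with a boundedness argument: any destabilizing subobject has rank at most $r$ and $\mu_\omega$ confined to $[\beta\omega,\mu_\omega(F)]$, and the support property bounds its discriminant, so only finitely many classes can occur as destabilizers; each contributes a single bounded wall on the ray $\{\sigma_{\beta,t\omega}\}$, and beyond the largest of these walls the limiting lexicographic order is already in force. Securing this uniform $t_0$ is the crux; the remainder is the bookkeeping of turning phase inequalities into slope inequalities.
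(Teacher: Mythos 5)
The paper does not prove this statement at all: it is imported verbatim as Bridgeland's Proposition 14.2 from \cite{Bri2} and used as a black box, so there is no in-paper argument to compare against. Your plan is, in outline, Bridgeland's own: compute $Z_{\beta,t\omega}$ explicitly, observe that for $t\gg 0$ the phase order on positive-rank classes with $\mathrm{Im}\,Z>0$ is the lexicographic order on (slope, $\beta$-twisted $\chi/r$), translate subobjects in the tilted heart into subsheaves via the cohomology sequence of the defining triangle, and then confront the uniformity in $t$. The central charge computation is correct, the asymptotic dictionary is right (a small quibble: the comparison function $\mathrm{Re}\,Z_t(E_2)\,\mathrm{Im}\,Z_t(E_1)-\mathrm{Re}\,Z_t(E_1)\,\mathrm{Im}\,Z_t(E_2)$ is $t$ times a quadratic in $t$, not a quadratic, but its leading coefficient is the positive multiple of $\mu_\omega(E_1)-\mu_\omega(E_2)$ you claim), and the reductions in both directions are standard and correctly sketched.

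The gap is exactly where you locate it, but your proposed fix does not close it. A subobject $A\hookrightarrow F$ in $\mathcal A_{\beta,\omega}$ sits in an exact sequence $0\to \mathcal H^{-1}(F/A)\to A\to A'\to 0$ with $A'\subseteq F$ a subsheaf, so $r(A)=r(A')+r(\mathcal H^{-1}(F/A))$; only the first summand is bounded by $r(F)$, and the second is a priori unbounded (any sheaf surjection $A\twoheadrightarrow F$ with kernel in $\mathcal F$ and $A\in\mathcal T$ gives a monomorphism $A\hookrightarrow F$ in the tilt of arbitrarily large rank). So the asserted bound ``rank at most $r$'' is false, and with it the conclusion that only finitely many destabilizing classes occur. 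Likewise, local finiteness of walls is a local statement and does not by itself produce finitely many walls on the unbounded ray $\{\sigma_{\beta,t\omega}\}_{t\geq 1}$ --- indeed, the assertion that the Gieseker region is a genuine chamber is part of what is being proved. The correct boundedness --- that the set of Mukai vectors of subobjects with $0\leq \mathrm{Im}\,Z\leq \mathrm{Im}\,Z(F)$ and phase not eventually smaller than $\phi_t(F)$ is finite --- needs the quantization of $\mathrm{Im}\,Z$ (rationality of $\beta$ and $\omega$) together with the Bogomolov inequality applied to the Harder--Narasimhan factors of both $A'$ and $\mathcal H^{-1}(F/A)$ to control ranks and $ch_2$ simultaneously. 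This is the technical heart of Section 14 of \cite{Bri2} and of Toda's boundedness results, and it is the one step your outline leaves genuinely open.
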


Such a chamber of $v$ where semistable objects are (shifts of) Gieseker semistable sheaves is called the Gieseker chamber of $v$.

\subsection{Wall-crossing.}\label{sec2.3}

Now we review some of the work in \cite{BM2} about wall-crossing for $M_\sigma(v).$  Throughout this subsection, $v$ is primitive with $(v, v)>0.$ When $\mathcal W$ is a wall of $v,$  $\sigma_0$ denotes a generic element in $\mathcal W$ (i.e. it does not lie in any other wall), $\sigma_+$, $\sigma_-$ denote generic stability conditions separated by $\mathcal W,$ and sufficiently closed to $\sigma_0.$ 

\begin{thm}[\cite{BM2}, Theorem 1.1]\label{BMthm1}
$M_{\sigma_+}(v)$ and $M_{\sigma_-}(v)$ are birational. Moreover, there is a derived equivalence $\Phi : D^b(X) \to D^b(X)$ induces a birational map $\phi : M_{\sigma_+}(v) \dashrightarrow M_{\sigma_-}(v)$ in the sense that: there exists a big open subset $U \subset M_{\sigma_+}(v)$ such that $\phi$ is an isomorphism from $U$ to its image, and for any $u\in U,$ $\Phi(\mathcal{E}_{u})= \mathcal F_{\phi(u)},$ where $\mathcal{E}_u$ and $\mathcal{F}_{\phi(u)}$ are the semistable objects that parametrized by the points $u$ and $\phi(u)$ respectively.  
\end{thm}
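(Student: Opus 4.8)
The plan is to build the derived equivalence $\Phi$ first and then read off the birational map from it. Since $v$ is primitive with $(v,v)>0$ and $\sigma_\pm$ are generic, the existence theorem quoted above makes $M_{\sigma_+}(v)$ and $M_{\sigma_-}(v)$ smooth irreducible projective varieties; the formula $-\chi(E,E)=(v,v)$ together with $\hom(E,E)=\mathrm{ext}^2(E,E)=1$ for a stable object $E$ shows that each carries tangent spaces $\mathrm{Ext}^1(E,E)$ of the same dimension $(v,v)+2$. So to prove birationality it suffices to exhibit a dense open $U\subseteq M_{\sigma_+}(v)$ and a single derived equivalence $\Phi\colon D^b(X)\to D^b(X)$ carrying the objects $\mathcal{E}_u$, $u\in U$, to $\sigma_-$-stable objects of class $v$, injectively on isomorphism classes, and then to upgrade $U$ to a \emph{big} open (complement of codimension $\geq 2$).

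First I would try the naive choice $\Phi=\mathrm{id}$. Fix a generic $\sigma_0\in\mathcal W$ and ask which $\sigma_+$-stable $E$ of class $v$ remain $\sigma_0$-\emph{stable}. Stability at $\sigma_0$ is an open condition that persists under small perturbations of the stability condition, so any such $E$ is automatically $\sigma_-$-stable as well, and the identity functor then identifies $M_{\sigma_+}(v)$ and $M_{\sigma_-}(v)$ along the locus of $\sigma_0$-stable objects. When $\mathcal W$ is \emph{not} totally semistable this already works: a generic object stays $\sigma_0$-stable, and the bad locus, described by the possible Jordan--H\"older factors at the wall, has codimension $\geq 1$ and in fact $\geq 2$ away from the divisorial case, giving the claimed isomorphism in codimension one.

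The genuine content is the totally semistable case, where \emph{no} object of class $v$ is $\sigma_0$-stable and the identity is useless. Here I would use the rank-two primitive sublattice $\mathcal H_{\mathcal W}\subseteq H^*_{\mathrm{alg}}(X)$ of classes $w$ with $Z_{\sigma_0}(w)$ real-proportional to $Z_{\sigma_0}(v)$. By the signature of the Mukai pairing this sublattice is hyperbolic, so it contains either an isotropic class or a class $s$ with $(s,s)=-2$. A $\sigma_0$-stable object $S$ realizing a minimal such spherical class is a spherical object, and the associated spherical twist $ST_S$ (or its inverse, or a composition with shifts) is the candidate for $\Phi$. Being a Fourier--Mukai equivalence, $\Phi$ preserves the Mukai pairing, hence the wall $\mathcal W$, and on cohomology it induces the reflection in $s$, which is exactly the involution swapping the two sides of $\mathcal W$; by the commuting square relating $\Phi_P$ and $\Phi_{v(P)}$ this is the correct class-level behavior.

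The hard part, which I expect to be the main obstacle, is to show that this single $\Phi$ sends a \emph{generic} $\sigma_+$-stable object of class $v$ to a $\sigma_-$-stable object of class $v$, and that the locus where it does so is big. I would attack it by analysing the Jordan--H\"older filtration of a $\sigma_0$-semistable $E$ into $\sigma_0$-stable factors, whose classes all lie in $\mathcal H_{\mathcal W}$; the subcategory generated by these factors is governed by $\mathcal H_{\mathcal W}$ and behaves like modules over a finite-dimensional algebra, so the effect of the spherical twist on phases can be computed combinatorially. One then argues that after applying $\Phi$ the image acquires the opposite destabilizing behaviour, becoming $\sigma_-$-stable for generic $E$, while the exceptional locus is cut out by the vanishing of certain $\mathrm{Hom}$ and $\mathrm{Ext}$ groups and so has codimension $\geq 2$. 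This is precisely where the full Bayer--Macr\`i wall classification becomes unavoidable: one must split the totally semistable walls into the isotropic and the various spherical (flopping, divisorial, fake) cases and verify the statement in each, and I expect that case analysis to absorb most of the work.
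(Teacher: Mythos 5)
The paper does not actually prove this statement: it is imported verbatim from Bayer--Macr\`i (\cite{BM2}, Theorem 1.1), and \cref{sec2} of the paper only assembles the ingredients of their argument (the hyperbolic lattice $\mathcal H_{\mathcal W}$, the classification of walls, the minimal class $v_0$, and the wall-hitting equivalences of \cref{BMthm2}). So there is no in-paper proof to compare against; what you have written is an outline of the \cite{BM2} strategy rather than an argument that can be checked.

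As an outline it points in the right direction, but it has two concrete gaps. First, hyperbolicity of $\mathcal H_{\mathcal W}$ does not by itself produce an isotropic or spherical class; the existence of an isotropic $w$ with $(v,w)=1$ or an effective spherical $s$ with $(v,s)<0$ at a totally semistable wall is precisely the content of \cref{BM5.7}, which has to be proved, not inferred from the signature of the Mukai pairing. Second, and more seriously, the mechanism in the totally semistable case is not a single spherical twist ``reflecting $v$ across the wall'': one passes to the unique minimal class $v_0$ in the orbit of $v$ under the group generated by spherical reflections (\cref{BM6.6}), identifies each of $M_{\sigma_\pm}(v)$ with the intermediate moduli $M_{\sigma_0}(v_0)$ via compositions $\Phi_\pm$ of several spherical twists (\cref{BMthm2}), and only then obtains $\Phi$ as $\Phi_-\circ\Phi_+^{-1}$; your sketch never introduces $v_0$ or the intermediate moduli, which is the device that makes the generic-stability and codimension-two claims tractable. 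Finally, you explicitly defer the verification that the image of a generic object is $\sigma_-$-stable and that the exceptional locus has codimension at least two to an unspecified case analysis --- but that verification \emph{is} the theorem, so as it stands the proposal is a plan rather than a proof.
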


The main technical tool behind this theorem is their classification of walls. The following sublattice of $H^*_{alg}(X)$ plays a key role: $$\mathcal H_{\mathcal W}:=\{u\in H^*_{alg}(X): Z_\sigma(u)\in \mathbb R Z_\sigma(v), \forall \sigma \in \mathcal W  \}.$$ 

\begin{prop}[\cite{BM2}, Proposition 5.1]
$\mathcal H_{\mathcal W}$ is a primitive hyperbolic sublattice of rank two containing $v$. If $E$ is a $\sigma_+$-stable object of class $v$, then the classes of its $\sigma_0$-Jordan-H\"older factors and its $\sigma_-$-Harder-Narasimhand factors are contained in $\mathcal H_{\mathcal W}.$
\end{prop}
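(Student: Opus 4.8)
The plan is to separate two genuinely different inputs: a period-domain/linear-algebra computation that pins down the rank and signature of $\mathcal{H}_{\mathcal{W}}$, and a wall-crossing argument that places the factor classes inside it. Two observations are immediate. First, $v \in \mathcal{H}_{\mathcal{W}}$ because $Z_\sigma(v) \in \mathbb{R}Z_\sigma(v)$ trivially. Second, $\mathcal{H}_{\mathcal{W}}$ is saturated (hence primitive): if $nu \in \mathcal{H}_{\mathcal{W}}$ for some positive integer $n$, then $Z_\sigma(u) = \tfrac1n Z_\sigma(nu) \in \mathbb{R}Z_\sigma(v)$ for all $\sigma \in \mathcal{W}$, so $u \in \mathcal{H}_{\mathcal{W}}$. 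The real content is therefore rank two, hyperbolicity, and membership of the factors.

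For the rank I would first produce a second class. Since $\sigma_0$ lies on a wall of $v$ and $E$ is $\sigma_+$-stable, $E$ is strictly $\sigma_0$-semistable (semistability is closed as $\sigma_+ \to \sigma_0$), so its $\sigma_0$-Jordan--H\"older filtration has a factor $A$ whose class $w := v(A)$ is not proportional to $v$ and satisfies $\phi_{\sigma_0}(A) = \phi_{\sigma_0}(E)$, i.e. $w \in \mathcal{H}_{\mathcal{W}}$. To see $\mathcal{H}_{\mathcal{W}} \subseteq \langle v, w\rangle_{\mathbb{R}}$ I would use Bridgeland's local isomorphism $\sigma \mapsto Z_\sigma$ onto an open subset of $\mathrm{Hom}(H^*_{alg}(X), \mathbb{C})$, under which $\mathcal{W}$ near $\sigma_0$ is cut out by the real-analytic equation $\mathrm{Im}(Z(w)\overline{Z(v)}) = 0$. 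If $u \in \mathcal{H}_{\mathcal{W}}$, then $\mathrm{Im}(Z(u)\overline{Z(v)})$ vanishes on this smooth hypersurface; since both are bilinear of the same bidegree in $(Z, \overline{Z})$, a division argument forces $u - cw \in \mathbb{R}v$ for some $c \in \mathbb{R}$ (choosing $Z$ suitably rules out independence of $u-cw$ and $v$). Hence $\mathcal{H}_{\mathcal{W}}$ has rank exactly two.

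For hyperbolicity I would invoke the period description of central charges on $Stab^\dagger(X)$: $Z_{\sigma_0}(-) = (\Omega, -)$ with $(\Omega, \Omega) = 0$ and $(\Omega, \overline{\Omega}) > 0$ (for the geometric model $\Omega = \exp(\beta + i\omega)$ of \cref{ex2.1} this is a direct check). Then $\mathrm{Re}\,\Omega$ and $\mathrm{Im}\,\Omega$ span a positive-definite plane $P$, and $\ker Z_{\sigma_0} = P^\perp$ is negative definite since $H^*_{alg}(X)$ has signature $(2, \rho)$. On the rank-two space $\mathcal{H}_{\mathcal{W}} \otimes \mathbb{R}$ the map $Z_{\sigma_0}$ has one-dimensional image $\mathbb{R}Z_{\sigma_0}(v)$, hence one-dimensional kernel spanned by some $u_0 \in P^\perp$ with $(u_0, u_0) < 0$; combined with $(v, v) > 0$ this gives signature $(1,1)$, so $\mathcal{H}_{\mathcal{W}}$ is hyperbolic.

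Finally, for the factor classes I would upgrade the proportionality $Z_\sigma(v(A_i)) \in \mathbb{R}Z_\sigma(v)$ from the single point $\sigma_0$ to all of $\mathcal{W}$. Local finiteness of walls lets me take $\sigma_0$ generic, so near $\sigma_0$ the actual wall $\mathcal{W}$ coincides with the numerical wall $\{\phi_\sigma(A_i) = \phi_\sigma(v)\}$ and each $\sigma_0$-stable factor $A_i$ remains stable of constant class; the equal-phase relation thus persists on a neighborhood in $\mathcal{W}$ and, by real-analyticity of $\mathrm{Im}(Z_\sigma(v(A_i))\overline{Z_\sigma(v)})$, over all of $\mathcal{W}$, giving $v(A_i) \in \mathcal{H}_{\mathcal{W}}$. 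The $\sigma_-$-Harder--Narasimhan factors of $E$ have $\sigma_0$-Jordan--H\"older factors among the $A_i$, so their classes are non-negative integer combinations of the $v(A_i)$ and hence also lie in $\mathcal{H}_{\mathcal{W}}$. I expect the main obstacle to be exactly this passage from the pointwise statement at $\sigma_0$ to the statement along the entire wall, both here and in the rank computation: the crux is the identification, valid only for generic $\sigma_0$, of the actual wall with the numerical wall of the destabilizing class, after which real-analyticity propagates the local proportionality globally along $\mathcal{W}$.
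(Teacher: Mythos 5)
The paper offers no proof of this statement: it is quoted directly from \cite{BM2} (Proposition 5.1) as background, so there is no in-paper argument to compare against. Measured against the source, your proposal reconstructs essentially the Bayer--Macr\`i argument: $v\in\mathcal H_{\mathcal W}$ and saturation are formal; rank $\le 2$ comes from a codimension count through Bridgeland's local homeomorphism $\sigma\mapsto Z_\sigma$ (your ``division argument'' on the irreducible quadric $\mathrm{Im}(Z(w)\overline{Z(v)})=0$ is a legitimate, if baroque, packaging of it); hyperbolicity follows from the signature $(2,\rho)$ of the Mukai lattice, since $\ker Z_{\sigma_0}$ is the negative-definite orthogonal complement of the positive plane spanned by $\mathrm{Re}\,\Omega$ and $\mathrm{Im}\,\Omega$, forcing a negative vector in $\mathcal H_{\mathcal W}\otimes\mathbb R$ alongside $v$ with $(v,v)>0$; and the factor classes are handled by identifying $\mathcal W$ locally with the numerical walls of the Jordan--H\"older factors and propagating by analytic continuation. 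The overall architecture is correct.

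Two points need repair or sharpening. First, the claim that ``$E$ is strictly $\sigma_0$-semistable'' because $\sigma_0$ lies on a wall is false for an arbitrary $\sigma_+$-stable $E$: at a flopping wall a generic $E$ remains $\sigma_0$-stable (its sole Jordan--H\"older factor is then $E$ itself, and the membership claim is vacuous for it). What the definition of a wall actually supplies is \emph{some} object of class $v$ whose stability changes, hence some strictly $\sigma_0$-semistable object; since $v$ is primitive and the factor classes pair positively under $Z_{\sigma_0}$, not all of its factor classes can be integer multiples of $v$, and \emph{that} is where the second generator of $\mathcal H_{\mathcal W}$ comes from. Second, you rightly flag the crux --- upgrading $Z_{\sigma_0}(v(A_i))\in\mathbb R Z_{\sigma_0}(v)$ to all of $\mathcal W$ --- but ``the actual wall coincides with the numerical wall'' is not yet an argument: it requires the local-finiteness input in the precise form that only finitely many classes occur as HN/JH factors of objects of class $v$ on a compact neighborhood of $\sigma_0$ (a consequence of the support property), so that a generic $\sigma_0$ can be chosen off all other numerical walls and the germ of $\mathcal W$ at $\sigma_0$ is pinned inside each $\{\mathrm{Im}(Z(v(A_i))\overline{Z(v)})=0\}$. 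With that lemma quoted explicitly, the analytic continuation along the connected real-analytic hypersurface $\mathcal W$ closes the argument, and the statement for the $\sigma_-$-HN factors follows since their classes are non-negative integer combinations of the $v(A_i)$.
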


Remarkably, $\mathcal H_{\mathcal W}$ contains enough information to determine the wall-crossing behavior of $M_\sigma(v)$ at $\mathcal W.$ To give a precise statement, we need some definitions.

\begin{dfn}
Given an arbitrary primitive hyperbolic rank two sublattice $\mathcal H,$ define a potential wall $\mathcal W$ associated to ${\mathcal H}$ as a connected component of the codimension one submanifold $\{\sigma \in Stab^\dag(X): Z_\sigma(\mathcal H) \subset \mathbb R e^{i\pi\phi}\}.$
\end{dfn}

\begin{dfn}
Let $\mathcal W$ be a potential wall associated to $\mathcal{H},$ define the effective cone of $\mathcal W$ $$C_{\mathcal W}:=\{u\in \mathcal H\otimes \mathbb R: (u,u)\geq -2, Z_\sigma(u)\in \mathcal R_{>0}Z_\sigma(v), \forall \sigma\in \mathcal W\}.$$ 
\end{dfn}

The following lemma justifies the name of $C_{\mathcal W}:$

\begin{lemma}[\cite{BM2}, Proposition 5.5]
If $u\in C_{\mathcal W} \cap \mathcal H,$ then for every $\sigma\in \mathcal W$ there exist a $\sigma$-semistable object of class $u.$ If $u\notin C_{\mathcal W},$ then for a generic $\sigma \in \mathcal W,$ there does not exist any $\sigma$-semistable object of class $u.$
\end{lemma}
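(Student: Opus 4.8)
The statement splits into an existence direction (for $u\in C_{\mathcal W}\cap\mathcal H$) and a nonexistence direction (for $u\notin C_{\mathcal W}$), the former carrying the real weight; the plan is to reduce everything to the rank-two hyperbolic lattice $\mathcal H=\mathcal H_{\mathcal W}$ and to exploit its signature $(1,1)$. Fix a generic $\sigma_0\in\mathcal W$. By the definition of a potential wall, $Z_{\sigma_0}$ maps $\mathcal H\otimes\mathbb R$ into a single real line $\mathbb R e^{i\pi\phi}$, so after rotating by $e^{-i\pi\phi}$ it becomes a real linear functional $\ell_{\sigma_0}\colon\mathcal H\otimes\mathbb R\to\mathbb R$, which I normalize by $\ell_{\sigma_0}(v)>0$; the positivity condition $Z_\sigma(u)\in\mathbb R_{>0}Z_\sigma(v)$ then reads $\ell_\sigma(u)>0$. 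Thus $C_{\mathcal W}$ is the intersection of $\{(u,u)\ge -2\}$ with the half-planes $\{\ell_\sigma>0\}$ over all $\sigma\in\mathcal W$, a two-dimensional sector in $\mathcal H\otimes\mathbb R$; I would first record this elementary description, as both directions rest on it.

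For the nonexistence direction, suppose $E$ is $\sigma$-semistable of class $u$ for generic $\sigma\in\mathcal W$. Its $\sigma_0$-Jordan--H\"older factors $S_i$ are $\sigma_0$-stable of the phase of $v$, and this phase coincidence persists along $\mathcal W$, so their classes $s_i=v(S_i)$ lie in $\mathcal H$ (cf.~\cite{BM2}); write $u=\sum_i a_i s_i$ with $a_i\ge 1$. Each $S_i$ is simple, so Serre duality on the K3 gives $\hom(S_i,S_i)=\mathrm{ext}^2(S_i,S_i)=1$, and the identity $(s_i,s_i)=-\chi(S_i,S_i)$ yields $(s_i,s_i)=\mathrm{ext}^1(S_i,S_i)-2\ge -2$; for distinct factors of equal phase $\hom(S_i,S_j)=\mathrm{ext}^2(S_i,S_j)=0$, whence $(s_i,s_j)=\mathrm{ext}^1(S_i,S_j)\ge 0$. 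Summing, $\ell_\sigma(u)=\sum_i a_i\,\ell_\sigma(s_i)>0$ for every $\sigma\in\mathcal W$; and when $u$ is primitive (the relevant case, since the norm bound already excludes imprimitive multiples of spherical classes) the signature $(1,1)$ forces $(u,u)\ge -2$ — with a single factor $u=s_1$ has $(s_1,s_1)\ge -2$, while with two or more distinct factors the negative determinant of their Gram matrix gives $(u,u)>0$. Hence $u\in C_{\mathcal W}$, and the contrapositive is the nonexistence claim.

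For existence, write $u=k\,u'$ with $u'\in\mathcal H$ primitive and $k\ge 1$. The norm bound in $C_{\mathcal W}$ forces $(u',u')\ge -2$, and $(u',u')\ge 0$ whenever $k\ge 2$ (since $k^2(u',u')\ge -2$). If $(u',u')\ge 0$ I would invoke the nonemptiness of moduli (\cite{BM1}, via deformation to the large-volume/Gieseker chamber in \cref{prop2.2.6} and the classical nonemptiness of Gieseker-stable sheaves) to produce a $\sigma$-stable object $U'$ of class $u'$; if $(u',u')=-2$ then $k=1$ and $u'$ is realized by its unique spherical stable object $U'$. Since $U'$ has $\ell_\sigma$-positive phase, the direct sum $U'^{\oplus k}$ is a $\sigma$-semistable object of class $u$ for generic $\sigma\in\mathcal W$, using that a finite direct sum of $\sigma$-semistable objects of one phase is again $\sigma$-semistable; the upgrade to every $\sigma\in\mathcal W$ is addressed below.

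The main obstacle is the nonemptiness input used in the existence direction: that every primitive class in the positive cone with the correct phase genuinely supports a $\sigma$-stable object. This is not formal — it rests on deforming to the large-volume limit, where Bridgeland and Gieseker stability agree (\cref{prop2.2.6}), and on the classical nonemptiness and irreducibility of moduli of Gieseker-stable sheaves, combined with deformation-invariance of the moduli along a path in $Stab^\dagger(X)$ that avoids the other walls. The secondary delicate point is upgrading existence from a generic $\sigma_0\in\mathcal W$ to every $\sigma\in\mathcal W$, including points on further walls: here I would use that admitting a $\sigma$-semistable object of a fixed class is a closed condition on $\sigma$, and that a primitive effective class, being minimal in $C_{\mathcal W}$, cannot lose semistability as $\sigma$ moves within $\mathcal W$.
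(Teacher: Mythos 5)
A preliminary remark: the paper gives no proof of this lemma at all --- it is imported verbatim from \cite[Proposition~5.5]{BM2} --- so your argument can only be measured against Bayer--Macr\`i's, whose overall strategy (Jordan--H\"older decomposition inside $\mathcal H_{\mathcal W}$ for nonexistence; the nonemptiness theorem for primitive positive classes, plus direct sums, for existence) you do essentially reproduce. The existence half is acceptable as a sketch, granting the nonemptiness theorem and the closedness of the locus of $\sigma$ admitting a semistable object of fixed class.

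The nonexistence half, however, contains a step that fails. You need that a class $u$ carrying a $\sigma$-semistable object for generic $\sigma\in\mathcal W$ satisfies $(u,u)\ge -2$, and this is false. Concretely: if $S_1,S_2$ are two non-isomorphic $\sigma_0$-stable spherical objects of the phase of $v$, hyperbolicity of $\mathcal H_{\mathcal W}$ forces $(s_1,s_2)\ge 3$; taking $(s_1,s_2)=3$, the object $S_1\oplus S_2^{\oplus 4}$ is $\sigma_0$-semistable of the \emph{primitive} class $u=s_1+4s_2$ with $(u,u)=-2+24-32=-10$, and already $S^{\oplus 2}$ has class $2s$ with square $-8$. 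So your claim that ``two or more distinct factors'' together with the negative Gram determinant force $(u,u)>0$ is simply wrong (a positive integral combination of two spherical classes need not land in the positive cone), and your parenthetical dismissal of imprimitive classes does not close the gap, since the lemma quantifies over all $u\notin C_{\mathcal W}$. The underlying issue is that the paper's transcription of $C_{\mathcal W}$ drops the words ``the cone generated by'': in \cite{BM2}, $C_{\mathcal W}$ is the cone \emph{spanned} by classes with $(u,u)\ge -2$ and $Z_\sigma(u)\in\mathbb R_{>0}Z_\sigma(v)$, not the set of such classes. With the literal definition reproduced above the statement is false (the examples just given); with the correct definition your own Jordan--H\"older decomposition $u=\sum_i a_i s_i$ finishes the nonexistence direction immediately, because each $s_i$ satisfies $(s_i,s_i)\ge -2$ and $\ell_\sigma(s_i)>0$ and hence is a generator of the cone --- no bound on $(u,u)$ itself is needed, and none should be attempted.
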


\begin{dfn}
A class $u\in \mathcal H$ is called: effective if $u\in C_{\mathcal W};$ isotropic if $(u, u)=0;$ spherical if $(u, u)=-2.$ 
\end{dfn}

\begin{prop}[\cite{BM2}, Proposition 5.7]\label{BM5.7}
Given $v$, $\mathcal H$ as above and $\mathcal W$ a potential wall associated to $\mathcal H,$ then $\mathcal W$ is a totally semistable wall of $v$ if and only if in $\mathcal H$ there exists either an isotropic class $w$ with $(v,w)=1,$ or an effective spherical class $s$ with $(v, s)<0.$
\end{prop}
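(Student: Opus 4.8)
The plan is to convert every stability-theoretic assertion into arithmetic in the rank-two lattice \(\mathcal{H}\). Fix a generic \(\sigma_0 \in \mathcal{W}\). Since \(Z_{\sigma_0}\) maps \(\mathcal{H}\) into one real line (\cite{BM2}, Proposition 5.1), I would first normalize it to a real linear functional \(\rho : \mathcal{H}\otimes\mathbb{R} \to \mathbb{R}\) with \(\rho(v)>0\); then the effective cone is \(C_{\mathcal{W}} = \{u : \rho(u)>0\}\cap\{(u,u)\geq -2\}\), and by the description of \(C_{\mathcal{W}}\) (\cite{BM2}, Proposition 5.5) these are exactly the classes carrying a \(\sigma_0\)-semistable object. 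Because \(\ker\rho\) is spacelike, the condition \(\rho(u)>0\) coincides with \(u\) lying in the forward cone containing \(v\). The computational engine is the Mukai pairing \((v(A),v(A')) = -\chi(A,A')\) together with stability: distinct \(\sigma_0\)-stable objects \(A,A'\) of phase \(\phi(v)\) have \(\hom(A,A')=ext^2(A,A')=0\), so \((v(A),v(A'))=ext^1\geq 0\). Hence the classes \(u_i\) of the \(\sigma_0\)-Jordan–H\"older factors of any semistable \(E\) of class \(v\) lie in \(C_{\mathcal{W}}\) with \((u_i,u_i)\geq -2\) and \((u_i,u_j)\geq 0\) for \(i\neq j\), and total semistability means precisely that \(v\) is \emph{not} the class of a \(\sigma_0\)-stable object.

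For the implication ``obstruction \(\Rightarrow\) totally semistable,'' suppose first there is an effective spherical \(s\) with \((v,s)<0\). Assume for contradiction that \(E\) is \(\sigma_0\)-stable of class \(v\). By Proposition 5.5 there is a \(\sigma_0\)-semistable object of class \(s\); let \(S_j\) be its \(\sigma_0\)-stable factors, of classes \(s_j\) and phase \(\phi(v)\). For each \(j\), either \(E\cong S_j\), in which case \((v,s_j)=(v,v)>0\), or \(E\not\cong S_j\), in which case \((v,s_j)=ext^1(E,S_j)\geq 0\); in all cases \((v,s_j)\geq 0\), so \((v,s)=\sum m_j(v,s_j)\geq 0\), a contradiction. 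Now suppose there is an isotropic \(w\) with \((v,w)=1\). Then \(w\) is primitive and, since \((v,w)=1>0\), it lies in the forward cone, so it is effective; thus \(M_{\sigma_0}(w)\) is a K3 surface \(Y\) and its universal family gives a Fourier–Mukai equivalence \(\Phi : D^b(Y)\to D^b(X)\) with \(\Phi_*((0,0,1)_Y)=w\) and \(\sigma_0 \mapsto \sigma_0'\). As \(\Phi_*\) is an isometry, \((\Phi_*^{-1}v,(0,0,1))=1\), so \(\Phi_*^{-1}v\) has rank \(\pm 1\); after twisting by a line bundle the statement becomes the point-class wall for a rank-one class on \(Y\), where I would exhibit the forced decomposition directly — the ideal-sheaf-type object is strictly semistable with factors an ideal sheaf and shifted skyscrapers — or, when \((v,v)\geq 4\), reduce to the spherical case via the effective spherical class \(v(\mathcal{O}_Y)\). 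This proves totally semistable.

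For the converse ``totally semistable \(\Rightarrow\) obstruction'' I would argue the contrapositive: assuming there is no effective spherical \(s\) with \((v,s)<0\) and no isotropic \(w\) with \((v,w)=1\), I must produce a \(\sigma_0\)-stable object of class \(v\). The spherical hypothesis says exactly that \(v\) lies in the closed dominant chamber \(\{u:(u,s)\geq 0\text{ for all effective spherical }s\}\) of the reflection group \(W_{\mathcal{H}}\) generated by \(r_s(u)=u+(u,s)s\). Since each reflection is realized on objects by a spherical twist, an autoequivalence preserving \(\sigma_0\)-(semi)stability up to shift (\cite{BM2}), the class \(v\) carries a \(\sigma_0\)-stable object if and only if its \(W_{\mathcal{H}}\)-minimal (dominant) representative does; so I may assume \(v\) is dominant. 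For such a minimal positive class with no isotropic class pairing to \(1\), I would invoke nonemptiness of the stable locus — a Yoshioka-type existence theorem available through the construction of \(M_{\sigma_0}\) in \cite{BM1} — to produce a \(\sigma_0\)-stable object, so \(\mathcal{W}\) is not totally semistable.

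The main obstacle is this last existence step. The difficulty is that the homological pairing alone does \emph{not} obstruct stability in the isotropic situation: for a hypothetical stable \(E\) and a stable \(W\) of class \(w\) one only computes \((v,w)=ext^1(E,W)=1\geq 0\), with no contradiction, so the argument cannot be purely numerical and must use the global geometry of \(M_{\sigma_0}\). I expect to handle it by the reduction to dominant classes above together with a case analysis of the rank-two lattice \(\mathcal{H}\) according to whether it represents \(-2\) or \(0\), isolating the unique family of dominant classes for which stability genuinely fails — precisely those admitting an isotropic \(w\) with \((v,w)=1\) — and showing that for all remaining dominant classes a general semistable object of class \(v\), built by iterated extensions of its would-be factors, is in fact stable.
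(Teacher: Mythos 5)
A preliminary remark: the paper does not prove this proposition at all --- it is quoted verbatim from \cite{BM2}, Proposition 5.7, whose proof occupies several sections of that paper (the nonisotropic analysis in their Section 6, the isotropic case via a (twisted) Fourier--Mukai reduction in their Section 8). So your proposal has to be measured against that argument. The part you do carry out --- the spherical half of the ``if'' direction --- is correct and complete: hom-vanishing between non-isomorphic $\sigma_0$-stable objects of equal phase, Serre duality to kill $ext^2$, and summing the Mukai pairing over Jordan--H\"older factors is exactly the standard argument, and indeed the same style of reasoning the present paper uses in \cref{clastsw}.

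The other two parts have genuine gaps. For the converse, your plan is to move $v$ into the dominant chamber of the reflection group and then ``invoke nonemptiness of the stable locus --- a Yoshioka-type existence theorem available through the construction of $M_{\sigma_0}$ in \cite{BM1}.'' This step fails as stated: every class of $\mathcal H$ has central charge aligned along $\mathcal W$, so $\sigma_0$ is \emph{not} generic with respect to $v$, and the Yoshioka/Bayer--Macr\`i nonemptiness theorems produce stable objects only at generic stability conditions; existence of $\sigma_0$-stable objects \emph{at the wall} is precisely the content of the proposition and cannot be taken as input. Bayer and Macr\`i instead show that the \emph{generic} object of $M_{\sigma_+}(v)$ stays $\sigma_0$-stable, via dimension estimates on the strictly-semistable locus and an induction through the $G$-orbit of $v$ (cf.\ \cref{BM6.6}). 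Relatedly, your assertion that each reflection is ``realized on objects by a spherical twist, an autoequivalence preserving $\sigma_0$-(semi)stability up to shift'' is not true in general: the preservation statements (\cref{BMthm2}) require the twisting object to be $\sigma_0$-stable and hold only for generic objects --- they are outputs of the same machinery, so your reduction to a dominant representative is circular. For the isotropic implication, two difficulties are glossed over: since $w\in\mathcal H$, the generic $\sigma_0\in\mathcal W$ still lies on a potential wall for $w$, so $M_{\sigma_0}(w)$ may consist of strictly semistable objects, and even when it is a K3 surface the universal family may exist only after a Brauer twist --- this is why \cite{BM2} must work with twisted Fourier--Mukai kernels. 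Moreover, after the rank-one reduction your key claim, that the ideal-sheaf-type object is \emph{forced} to be strictly semistable with skyscraper-type factors, is asserted rather than proved; note that the sign matters here, since only one of $\pm w$ is effective and $(v,w)=1$ gives $\chi(E,W)=-1$, which (as your own closing paragraph concedes) yields no numerical contradiction --- one must either locate a stable object of the effective isotropic class pairing \emph{negatively} with $v$, or pass to its spherical stable factors and fall back on the spherical case, which is the actual structure of Bayer--Macr\`i's argument reflected in \cref{BM8.3}. As it stands, your sketch identifies the hard points honestly but does not resolve them.
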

\begin{rmk}
Their proposition also clarifies when $\mathcal W$ induce a divisorial contraction, a samll contraction, when it is a fake wall (being totally semistable but inducing an isomorphism between moduli) and when it is not a wall.
\end{rmk}

We will need to further classify totally semistable walls, using \cref{BM5.7} and the following.

\begin{prop}[\cite{BM2}, Lemma 8.3]\label{BM8.3} Suppose that $\mathcal W$ is a totally semistable wall for $v$ such that $\mathcal H_{\mathcal W}$ contains an isotropic class, then there exists a unique $\sigma_0$-stable spherical object $S$ with $v(S)\in \mathcal C_{\mathcal W}.$ Furthermore, if $E\in M_{\sigma_+}(v)$ generic in moduli, then its HN filtration with at $\sigma_-$ has length two and of the form either $S^{\oplus a}\to E \to F$ or $F\to E\to S^{\oplus a},$ where $a>0.$
\end{prop}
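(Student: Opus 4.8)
The plan is to reduce everything to the rank-two lattice $\mathcal H := \mathcal H_{\mathcal W}$, produce $S$ as the unique stable object in the only effective spherical class, and then recognize the $\sigma_-$-Harder--Narasimhan filtration of a generic $E$ as the defining triangle of a spherical twist. Throughout, for generic $\sigma_0 \in \mathcal W$ I write $Z_{\sigma_0}(u) = \rho(u)\,e^{i\pi\phi_0}$ on $\mathcal H$, normalized so that $\rho(v) > 0$.

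First I would pin down the spherical classes. Choosing a primitive isotropic $w \in \mathcal H$ (which exists by hypothesis) and completing it to a basis $\{w,x\}$ with $d := (w,x)$ and $(x,x) =: 2c$, a class $u = \alpha w + \beta x$ satisfies
\[
(u,u) = 2\beta(\alpha d + \beta c),
\]
so $u$ is spherical precisely when $\beta(\alpha d + \beta c) = -1$. This forces $\beta = \pm 1$, with $\alpha$ then determined, so $\mathcal H$ has at most one spherical class up to sign, say $\pm s_0$. For generic $\sigma_0$ one has $\rho(s_0)\neq 0$, so exactly one of $\pm s_0$ is positive; this is the unique candidate for an effective spherical class. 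That it actually lies in $C_{\mathcal W}$ is provided by the totally semistable hypothesis through \cref{BM5.7}: either it supplies an effective spherical class outright (which must then be $s_0$), or it supplies an isotropic $w'$ with $(v,w')=1$, whence $\det\langle v,w'\rangle=-1$, $\mathcal H$ is the unimodular hyperbolic plane, and one checks directly that the $\rho$-positive $s_0$ is effective.

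Next I would construct $S$ and prove uniqueness. Since $s_0$ is primitive and effective and $\sigma_0$ is generic, the moduli space $M_{\sigma_0}(s_0)$ has dimension $(s_0,s_0)+2 = 0$ and is a single $\sigma_0$-stable object $S$, necessarily spherical. One point needs care: that $\mathcal W$ is not itself a wall for $s_0$, so that $\sigma_0$ is generic for $s_0$; I would deduce this from the extremality of $s_0$, since a $\sigma_0$-destabilization of $S$ would write $s_0$ as a sum of two effective classes of equal phase and smaller $\rho$, which a short analysis in $\mathcal H$ rules out. Uniqueness of $S$ is then immediate: any $\sigma_0$-stable spherical object with class in $C_{\mathcal W}$ has class equal to the unique effective spherical class $s_0$, and primitivity of $s_0$ pins the object down to $S$.

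Finally, for a generic $E \in M_{\sigma_+}(v)$, which is strictly $\sigma_0$-semistable, I would note that all of its $\sigma_0$-Jordan--H\"older factors have class in $\mathcal H$ and phase $\phi_0$, the only spherical one among them being $S$; for $E$ generic in moduli the remaining factors assemble into a single $\sigma_0$-stable $F_0$ of class $v - a\,s_0$ with $a > 0$ the multiplicity of $S$. The autoequivalence of \cref{BMthm1} realizing the wall-crossing is the spherical twist $ST_S$ (or its inverse), and its defining triangle
\[
R\mathrm{Hom}(S,E)\otimes S \longrightarrow E \longrightarrow ST_S(E)
\]
reads $S^{\oplus a} \to E \to F$ with $F := ST_S(E)$; I would argue that $S^{\oplus a}$ and $F$ are $\sigma_-$-semistable of distinct phases, so that this triangle is exactly the two-step $\sigma_-$-HN filtration of $E$, appearing as $S^{\oplus a}\to E\to F$ or $F\to E\to S^{\oplus a}$ according to the sign of $\phi_{\sigma_-}(S) - \phi_{\sigma_-}(F)$. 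The hard part will be precisely this last claim --- that the filtration has length \emph{exactly} two, i.e.\ that $F$ does not split further. I expect to handle it by fixing the admissible multiplicity $a$ through the values $(v - a s_0, v - a s_0)$ and $(v - a s_0, s_0)$ (a reflection in $\mathcal H$), and then using the rigidity of $S$ together with a dimension count for $M_{\sigma_0}(v - a s_0)$ to exclude further factors on a dense open locus and force $F$ to be $\sigma_-$-stable; the stability point flagged in the construction of $S$ is a smaller instance of the same book-keeping.
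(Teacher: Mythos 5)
A preliminary remark: the paper itself gives no proof of this proposition --- it is imported verbatim from Bayer--Macr\`i [\cite{BM2}, Lemma 8.3] --- so your attempt can only be measured against their argument. Your first half is essentially right and is the standard argument: writing $u=\alpha w+\beta x$ in a basis extending a primitive isotropic $w$ forces $\beta=\pm1$ for a spherical class, so $\mathcal H_{\mathcal W}$ has at most one spherical class up to sign; \cref{BM5.7} guarantees an effective one exists (your reduction of the case $(v,w')=1$ to the unimodularity of $\langle v,w'\rangle$ is fine); and effectiveness plus the Mukai/Jordan--H\"older argument produces the unique stable $S$. No complaints there.

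The gap is precisely at the point you flag as the hard part, and your proposed resolution aims at a false statement. You plan to show that the non-spherical Jordan--H\"older factors of $E$ ``assemble into a single $\sigma_0$-stable $F_0$'' and then to ``force $F$ to be $\sigma_-$-stable'' by a dimension count. In the isotropic situation this fails: by \cref{clastsw} one has, at a Hilbert--Chow-type wall, $v=s_0+nw'$ with $w'$ primitive isotropic and $n\geq 2$, so the complementary factor $F$ has the \emph{non-primitive} isotropic class $nw'$ and is generically S-equivalent to a direct sum of $n$ distinct stable objects of class $w'$ --- strictly semistable, never stable (the model is the Hilbert--Chow wall for $X^{[n]}$, where $F$ is $\mathcal O_Z$ up to shift, a sum of skyscrapers). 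The correct mechanism for the length-two claim is different: one shows all non-spherical JH factors have class a positive multiple of the \emph{same} primitive isotropic $w'$ (this is where the constraints $a\in\{1,n-1\}$, $v'=nw'$ recorded in the proof of \cref{clastsw} enter), so they share the same phase at $\sigma_-$ as well, and their extension $F$ is automatically $\sigma_-$-semistable without being stable. Relatedly, your identification of the filtration with the triangle $RHom(S,E)\otimes S\to E\to ST_S(E)$ silently assumes $RHom(S,E)$ is concentrated in degree $0$; in the Hilbert--Chow case it is not (for $E=I_Z$, $S=\mathcal O_X$ one has $Hom(S,E)=0$ and $Ext^1, Ext^2\neq 0$), and it is the dual evaluation $E\to Hom(E,S)^*\otimes S$ that furnishes the filtration, with $S^{\oplus a}$ as quotient. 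So the case split between $S^{\oplus a}\to E\to F$ and $F\to E\to S^{\oplus a}$ is not merely a matter of comparing phases of an already-constructed triangle; it requires the appropriate vanishing of $Hom(S,E)$ or $Hom(E,S)$ for generic $E$, which you do not address.
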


\begin{cor}\label{clastsw}
If $\mathcal W$ is a totally semistable wall of $v,$ and $E\in M_{\sigma_+}(v)$ generic in moduli, then there exists either a spherical subobject or spherical quotient object $S$ of $E$ with $(E,S)<0$, or the class decomposes as $v=s+n w',$ where $s$ spherical, $w'$ primitive and isotropic, and $n$ is the number such that $(v,v)=2n-2.$
\end{cor}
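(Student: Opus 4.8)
The plan is to run the dichotomy of \cref{BM5.7} and then, in the isotropic branch, refine it using the shape of the Harder--Narasimhan filtration provided by \cref{BM8.3}; the two alternatives in the statement will correspond exactly to the presence of a spherical class pairing strictly negatively with $v$ versus the ``balanced'' isotropic situation. Throughout, $\sigma_0$ is a generic point of $\mathcal W$, so that the $\sigma_0$-semistable objects of the phase of $v$ form an abelian category and all Jordan--H\"older factors of the $\sigma_0$-semistable $E$ lie in $\mathcal H:=\mathcal H_{\mathcal W}$ by \cite{BM2}. By \cref{BM5.7} there is in $\mathcal H$ either an effective spherical class $s$ with $(v,s)<0$, or an isotropic class $w'$ with $(v,w')=1$.

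Suppose first that $\mathcal H$ contains an effective spherical class $s$ with $(v,s)<0$. For a $\sigma_0$-semistable object $S$ of class $s$ and a generic $E$ of class $v$ we have $(v,s)=-\chi(S,E)$, hence $\chi(S,E)>0$; combining Serre duality $\operatorname{Ext}^2(S,E)\cong \operatorname{Hom}(E,S)^\vee$ with $\chi(S,E)=\hom(S,E)-\operatorname{ext}^1(S,E)+\operatorname{ext}^2(S,E)$ forces $\operatorname{Hom}(S,E)\neq 0$ or $\operatorname{Hom}(E,S)\neq 0$. Replacing $S$ by a $\sigma_0$-stable Jordan--H\"older factor of class $s_i$ with $(v,s_i)<0$ (one exists since $(v,s)=\sum_i(v,s_i)<0$), equality of phases and stability of $S$ make this nonzero morphism a monomorphism or epimorphism in the abelian category of $\sigma_0$-semistable objects of that phase. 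Thus $S$ is a spherical sub- or quotient object of $E$ with $(v,v(S))<0$, which is the first alternative.

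Now suppose $\mathcal H$ contains a primitive isotropic class $w'$ with $(v,w')=1$. By \cref{BM8.3} there is a unique $\sigma_0$-stable spherical object $S$, with class $s:=v(S)\in C_{\mathcal W}$, and for generic $E$ the $\sigma_-$-HN filtration has length two, of the form $S^{\oplus a}\to E\to F$ or $F\to E\to S^{\oplus a}$ with $a>0$. If $(v,s)<0$ we again land in the first alternative, since the filtration exhibits $S$ as a sub- or quotient object. If instead $(v,s)\geq 0$, I will read off the second alternative from $\mathcal H$. Since $(v,w')=1$ and $w'$ is primitive isotropic, the pair $\{v,w'\}$ has Gram determinant $-1$ and hence is a basis of the rank-two lattice $\mathcal H$; writing $s=\alpha v+\beta w'$ and imposing $(s,s)=-2$ together with $(v,s)\geq 0$ and the effectivity of $F$ pins down $\alpha=1$, so that $s=v-nw'$ with $n:=-\beta$. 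This gives $v=s+nw'$, and reading off $(s,s)=(v,v)-2n=-2$ yields the relation $(v,v)=2n-2$, which is the second alternative.

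The main obstacle lies in two places. First, in the spherical branch one must promote the purely lattice-theoretic inequality $(v,s)<0$ to an honest stable spherical sub- or quotient object: this requires passing to a $\sigma_0$-stable Jordan--H\"older factor, verifying that it remains spherical and negatively paired with $v$, and checking that the morphism produced by the Euler-characteristic count is genuinely mono or epi in the heart. Second, in the isotropic branch the delicate point is to show that the spherical class occurs with coefficient exactly one, equivalently that the complementary factor $F$ carries a class proportional to $w'$; this is what forces $a=1$ and identifies $n$ through $(v,v)=2n-2$, and it is here that the precise shape of the filtration in \cref{BM8.3} and the rank-two geometry of $\mathcal H$ must be used together.
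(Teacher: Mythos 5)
Your proof is correct, and it follows the paper's skeleton in the first branch while taking a genuinely more elementary route in the second. In the spherical branch you argue exactly as the paper does (Euler characteristic plus Serre duality forces $\operatorname{Hom}(S,E)\neq 0$ or $\operatorname{Hom}(E,S)\neq 0$, then semistability of $E$ turns this into a sub- or quotient object); your extra step of passing to a $\sigma_0$-stable Jordan--H\"older factor $S_i$ with $(v,s_i)<0$ via \cref{mukaicor} is actually more careful than the paper, which tacitly assumes the nonzero map is already mono or epi. In the isotropic branch the paper extracts $v=s+nw'$ by citing the proof of \cite[Proposition 8.4]{BM2}, which gives $a\in\{1,n-1\}$, $v'=nw'$, $(s,v')=n$, and then rules out $a=n-1$ using $(v,s)\geq 0$; you instead observe that $\{v,w'\}$ is a $\mathbb Z$-basis of $\mathcal H_{\mathcal W}$ (Gram determinant $-1$) and solve $(s,s)=-2$ directly, forcing $s=\pm v+\beta w'$. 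This is self-contained and avoids quoting an external proof, at the cost of only producing the numerical decomposition rather than the geometric statement that $S$ appears with multiplicity one in the HN filtration (which the corollary does not claim and which is not used elsewhere). One small overstatement: for $n=2$ the sign $\alpha=-1$ is not actually excluded by your constraints, and ``effectivity of $F$'' is not a worked-out argument; but this is harmless, since in that case $-s=v-2w'$ is again spherical and the asserted decomposition $v=s'+nw'$ holds with $s'=-s$.
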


\begin{proof}
Suppose that we have an effective $(s,v)<0.$ Let $S\in M_{\sigma_0(s)},$ then $(s,v)=ext^1(S, E)-hom(S,E)-ext^2(S,E)<0$ implies that we have either $hom(S, E)\neq 0$ or $hom(E, S)\neq 0.$ Since $E$ is $\sigma_0$-semistable, $S$ is either a subobject or quotient of $E.$ Suppose otherwise, then we have $(v, w)=1$ where $w$ is isotropic by \cref{BM5.7}. So \cref{BM8.3} applies and $v=as+v':=av(S)+v(F).$ Furthermore, the proof of \cite[proposition 8.4]{BM2} shows that we must have $a = 1\ or\ n-1,$ $(v', v')=0,$ $(s,v')=n$ and $v'=nw',$ where $n$ is the number such that $(v,v)=2n-2$ and $w'$ is a primitive isotropic class. We observe that if $a=n-1,$ then $(v,s)=(n-1)(s,s)+(v',s)=2-n.$ Since we assume that $(v,s)\geq 0,$ so $n=2.$ So in any case $a=1.$ Thus $v=s+nw'.$  
\end{proof}

\begin{dfn}
We refer to the former case in \cref{clastsw} as a spherical totally semistable wall, and the latter as a Hilbert-Chow totally semistable wall. Also, we call a total semistable wall $\mathcal W$ isotropic if $\mathcal H_{\mathcal W}$ contains an isotropic class, and nonisotropic otherwise.
\end{dfn} 

As we shall see, Hilbert-Chow totally semistable walls are relatively simple for our purpose: they are some vertical rays on a slice $P_H \subset Stab(X).$ On the other hand, spherical totally semistable walls will be the main character in \cref{sec3}: we need to sort out the first wall among these. Once that is done, we will see the first wall is in fact nonisotropic. So we will need some of Bayer and Macr\`i's analysis on nonisotropic totally semistable walls [\cite{BM2}, Section 6]. First, recall an important lemma by Mukai:
\begin{lemma}[\cite{Muk}, Corollary 2.8]
Suppose that $\mathcal A$ is a heart of a bounded t-structure of $D^b(X),$ and $0\to A\to E\to B\to 0$ is a short exact sequence in $\mathcal A,$ such that $Hom(A, B)=0.$ Then $ext^1(E, E)\geq ext^1(A, A)+ext^1(B,B).$ 
\end{lemma}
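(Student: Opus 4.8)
The plan is to deduce the inequality from the additivity of the Euler form together with Serre duality, which on the K3 surface $X$ makes $D^b(X)$ a $2$-Calabi--Yau category, so that $ext^i(F,G)\cong ext^{2-i}(G,F)$ for all $F,G\in D^b(X)$. First I would rewrite each self-$Ext$ via $ext^1(M,M)=2\,hom(M,M)-\chi(M,M)$, which holds because $ext^2(M,M)=hom(M,M)$ by Serre duality. Since $\chi$ is additive in short exact sequences and the Mukai pairing is symmetric, $\chi(E,E)=\chi(A,A)+\chi(B,B)+2\chi(A,B)$. Substituting these into the desired inequality, and using the hypothesis $Hom(A,B)=0$ together with the Serre duality identity $ext^2(A,B)=hom(B,A)$ to evaluate $\chi(A,B)=hom(B,A)-ext^1(A,B)$, reduces the claim $ext^1(E,E)\geq ext^1(A,A)+ext^1(B,B)$ to the single inequality
\[ hom(E,E)+ext^1(A,B)\ \geq\ hom(A,A)+hom(B,B)+hom(B,A), \]
which I will call $(\ast)$.

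To prove $(\ast)$ I would analyse the natural ``diagonal block'' map on endomorphisms. Applying $Hom(A,-)$ to the given sequence and using $Hom(A,B)=0$ yields an isomorphism $Hom(A,A)\cong Hom(A,E)$; dually, applying $Hom(-,B)$ gives $Hom(E,B)\cong Hom(B,B)$. The first isomorphism means every $f\in Hom(E,E)$ carries the subobject $A$ into $A$, so $f$ has well-defined diagonal blocks and we obtain a linear map $\nu\colon Hom(E,E)\to Hom(A,A)\oplus Hom(B,B)$, $f\mapsto(f|_A,\bar f)$, where $\bar f$ is the induced endomorphism of $B$. A direct chase (writing $f|_A=0$ and $\bar f=0$ forces $f$ to factor as $E\twoheadrightarrow B\to A\hookrightarrow E$) identifies $\ker\nu$ with the copy of $Hom(B,A)$ embedded by $\phi\mapsto i\circ\phi\circ p$, where $i\colon A\to E$ and $p\colon E\to B$ are the structure maps. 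Hence $hom(E,E)=hom(B,A)+\dim\mathrm{im}\,\nu$, and $(\ast)$ is equivalent to the cokernel estimate $\dim\mathrm{coker}\,\nu\leq ext^1(A,B)$.

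For that estimate I would invoke the five-term exact sequence
\[ 0\to Hom(B,A)\to Hom(E,E)\xrightarrow{\ \nu\ }Hom(A,A)\oplus Hom(B,B)\xrightarrow{\ \psi\ }Ext^1(B,A), \]
whose connecting map is the difference of Yoneda products $\psi(a,b)=a_{*}e-b^{*}e$ against the extension class $e\in Ext^1(B,A)$ of the sequence $0\to A\to E\to B\to 0$. Exactness at the third term gives $\mathrm{coker}\,\nu\cong\mathrm{im}\,\psi\subseteq Ext^1(B,A)$, whence $\dim\mathrm{coker}\,\nu\leq ext^1(B,A)$; and $ext^1(B,A)=ext^1(A,B)$ by Serre duality, which is exactly the required estimate and therefore establishes $(\ast)$.

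The step I expect to be the main obstacle is the middle one: verifying carefully that the obstruction to lifting a pair of diagonal blocks $(a,b)$ to an endomorphism of $E$ is precisely $a_{*}e-b^{*}e$, and that the connecting maps in the two filtration long exact sequences assemble into the displayed five-term sequence with target $Ext^1(B,A)$. Everything else is bookkeeping with the Euler form and Serre duality, which are available since $X$ is a K3 surface; the genuine content is that the ``off-diagonal'' obstruction is controlled by a single $Ext^1$ that Serre duality matches to $ext^1(A,B)$.
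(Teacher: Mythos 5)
Your argument is correct, but note that the paper does not actually prove this lemma: it is quoted verbatim with a citation to Mukai, so there is no in-text proof to match. Your route is a legitimate self-contained alternative. The bookkeeping is right: on a K3, objects of a heart have $\mathrm{Ext}$'s concentrated in degrees $0$--$2$ (negative degrees vanish by the t-structure axioms, degrees above $2$ by Serre duality), so $ext^1(M,M)=2\,hom(M,M)-\chi(M,M)$, and additivity plus symmetry of $\chi$ together with $Hom(A,B)=0$ and $ext^2(A,B)=hom(B,A)$ reduce the claim exactly to your $(\ast)$. The structural input is then the five-term sequence $0\to Hom(B,A)\to Hom(E,E)\xrightarrow{\nu} Hom(A,A)\oplus Hom(B,B)\xrightarrow{\psi} Ext^1(B,A)$ with $\psi(a,b)=a_*e-b^*e$; this is standard in any abelian category (a pair $(a,b)$ lifts to a morphism of the extension to itself iff $a_*e=b^*e$), and $Ext^1_{\mathcal A}$ agrees with $Hom_{D^b(X)}(-,-[1])$ for the heart of a bounded t-structure, so the dimension count via $ext^1(B,A)=ext^1(A,B)$ goes through. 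Compared with Mukai's original argument, which works directly with the long exact $Ext$-sequences and pairs the maps $Ext^1(B,A)\to Ext^1(E,E)$ and $Ext^1(E,E)\to Ext^1(A,B)$ under Serre duality, your version isolates the 2-Calabi--Yau input into the Euler-characteristic reduction and pushes all the diagram-chasing into the (purely formal) endomorphism sequence; that makes it transparent where the K3 hypothesis is genuinely used, at the mild cost of having to verify the connecting map $\psi$, which you correctly flag as the only nontrivial step.
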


\begin{cor}[c.f. \cite{HMS}, Section 2]\label{mukaicor}
If $S$ is a $\sigma$-semistable spherical object, then all its Jordan-H\"older factors are spherical ($\sigma$-stable object).
\end{cor}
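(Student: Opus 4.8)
The plan is to derive the statement from Mukai's lemma quoted above, applied inductively. First I would reduce to the case where $S$ lies in the heart $\mathcal A=\mathcal P((0,1])$ and is semistable of some phase $\phi\in(0,1]$ (sphericity and the set of Jordan-H\"older factors are preserved under shift), and record that since $S$ is spherical one has $ext^1(S,S)=0$. Then I would work inside the abelian category $\mathcal P(\phi)\subseteq\mathcal A$ of $\sigma$-semistable objects of phase $\phi$: it has finite length, its simple objects are exactly the $\sigma$-stable objects of phase $\phi$, and for non-isomorphic such stable objects $S_i\not\cong S_j$ one has $Hom(S_i,S_j)=0$ (a nonzero map between stable objects of equal phase is an isomorphism), while $Hom(S_i,S_i)=\mathbb C$. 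The Jordan-H\"older factors of $S$ are precisely these simple objects, so it suffices to show each is spherical, which I would prove by induction on the length $\ell(S)$, the base case $\ell=1$ being the hypothesis itself.

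For the inductive step, fix a simple factor $S_1$ of $S$, let $A\subseteq S$ be the maximal subobject all of whose Jordan-H\"older factors are isomorphic to $S_1$, and set $B=S/A$. The key point is that $Hom(A,B)=0$: by maximality of $A$ no copy of $S_1$ embeds into $B$ (its preimage in $S$ would enlarge $A$), while any nonzero map $A\to B$ has image a nonzero quotient of $A$, whose socle is a sum of copies of $S_1$, forcing exactly such an embedding. With $Hom(A,B)=0$ in hand, Mukai's lemma applied to $0\to A\to S\to B\to 0$ in $\mathcal A$ gives $0=ext^1(S,S)\geq ext^1(A,A)+ext^1(B,B)$, so both summands vanish. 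When $A\neq S$ the object $B$ is rigid of strictly smaller length, so its factors are spherical by induction. For the isotypic piece $A$, an iterated self-extension of $S_1$ with $ext^1(A,A)=0$, I would finish numerically: Serre duality on the K3 gives $\chi(A,A)=2\,hom(A,A)-ext^1(A,A)=2\,hom(A,A)>0$, and since $v(A)=\ell(A)\,v(S_1)$ this forces $(v(S_1),v(S_1))<0$; being an even integer $\geq-2$ (the bound holding for any simple object, again by Serre duality together with $ext^1(S_1,S_1)\geq0$), it must equal $-2$, i.e. $S_1$ is spherical. Assembling the factors of $A$ and of $B$ proves the claim.

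The main obstacle is arranging the hypothesis $Hom(A,B)=0$ needed to invoke Mukai's lemma, since for two subquotients of equal phase this vanishing is not automatic; choosing $A$ to be the \emph{maximal} $S_1$-isotypic subobject — rather than a single simple sub- or quotient object — is exactly what makes it hold, and it simultaneously isolates the purely isotypic situation that the Euler-characteristic computation disposes of. The only other delicate point is the inequality $(v(S_1),v(S_1))\geq-2$ for a simple object, which I would justify from Serre duality and $ext^1(S_1,S_1)\geq0$.
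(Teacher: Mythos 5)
The paper itself offers no proof of \cref{mukaicor} --- it is stated as a direct consequence of the preceding Mukai lemma, with a pointer to \cite{HMS} --- and your argument is precisely the standard derivation from that lemma, so in substance you are on the intended route and the proof is essentially correct. Two small points need tightening. First, your induction hypothesis must be stated for \emph{rigid} semistable objects ($ext^1(T,T)=0$), not spherical ones: the quotient $B=S/A$ to which you apply the inductive step is only guaranteed rigid by Mukai's lemma, not spherical (e.g.\ it may fail $hom(B,B)=1$). This costs nothing, since your argument never uses $(v(S),v(S))=-2$ beyond its consequence $ext^1(S,S)=0$, and your length-one base case already handles rigid stable objects via $\chi(T,T)=2\,hom(T,T)-ext^1(T,T)=2$, giving $(v(T),v(T))=-2$. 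Second, you should take $S_1$ to be a simple \emph{subobject} of $S$ (an element of the socle of $S$ in the finite-length category $\mathcal P(\phi)$), not an arbitrary Jordan--H\"older factor: otherwise the maximal $S_1$-isotypic subobject $A$ could be zero, $B=S$ would have the same length, and the induction would not advance. With that choice $A\neq 0$, your socle argument for $Hom(A,B)=0$ is correct, and the remaining pieces --- Mukai's lemma forcing $ext^1(A,A)=ext^1(B,B)=0$, the Euler-characteristic computation $-\ell(A)^2(v(S_1),v(S_1))=2\,hom(A,A)>0$ together with evenness of the Mukai form and the bound $(v(S_1),v(S_1))\geq -2$ for stable $S_1$ --- are all sound.
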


\begin{prop}[\cite{BM2}, proposition 6.3]
Suppose that $\mathcal W$ is a nonisotropic totally semistable wall, and $\sigma_0 \in \mathcal W$ generic. Then $\mathcal H_{\mathcal W}$ contains infinitely many spherical classes, and two of them, denoted by $s$ and $t,$ admits a stable object in their moduli, that is $M_{\sigma_0}(s)=M_{\sigma_0}^{st}(s)=\{S\}$ and $M_{\sigma_0}(t)=M_{\sigma_0}^{st}(t)=\{T\}.$ Other spherical $\sigma_0$-semistable objects are strictly semistable, whose Jordan-H\"older factors are necessarily $S$ or $T$ (by \cref{mukaicor}).
\end{prop}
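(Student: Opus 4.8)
The plan is to read off everything from the lattice structure of $\mathcal{H}_{\mathcal{W}}$ together with \cref{mukaicor}. Since $\mathcal{W}$ is nonisotropic, $\mathcal{H}_{\mathcal{W}}$ is a rank-two hyperbolic lattice containing no isotropic class, so as a binary quadratic form it is indefinite and anisotropic. First I would establish existence of spherical classes: because $\mathcal{W}$ is totally semistable and contains no isotropic class, the isotropic alternative in \cref{BM5.7} is impossible, so there must be an effective spherical class $s$ with $(v,s)<0$. Infinitude is then a Pell phenomenon: an indefinite anisotropic binary form has an infinite group of automorphs, so the isometry group of $\mathcal{H}_{\mathcal{W}}$ is infinite, generated by a hyperbolic isometry $g$ with irrational null eigendirections. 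As a spherical class is anisotropic it is not an eigenvector of $g$, so its orbit $\{g^n s\}$ consists of infinitely many distinct $(-2)$-classes.

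Next I would translate ``stable spherical'' into lattice data. By \cref{mukaicor}, every $\sigma_0$-semistable spherical object has Jordan--H\"older factors that are $\sigma_0$-stable spherical objects; and a $\sigma_0$-stable object $S$ with $(v(S),v(S))=-2$ is rigid, since $-\chi(S,S)=(v(S),v(S))=-2$ together with $\hom(S,S)=\mathrm{ext}^2(S,S)=1$ (stability and Serre duality) forces $\mathrm{ext}^1(S,S)=0$, whence $S$ is the unique point of its moduli space. It therefore suffices to show that the set $\Sigma$ of classes of $\sigma_0$-stable spherical objects has exactly two elements $s,t$; the closing assertion on Jordan--H\"older factors then follows at once.

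For the upper bound $|\Sigma|\leq 2$, note that any two distinct classes $s_i,s_j\in\Sigma$ are realized by distinct stable objects of the same phase, so $\hom(S_i,S_j)=\mathrm{ext}^2(S_i,S_j)=0$ and hence $(s_i,s_j)=\mathrm{ext}^1(S_i,S_j)\geq 0$; moreover no two are proportional, as both are effective and $-s$ has the opposite phase. Two elements $s_1,s_2\in\Sigma$ span $\mathcal{H}_{\mathcal{W}}\otimes\mathbb{Q}$, so the Gram matrix $\left(\begin{smallmatrix}-2 & (s_1,s_2)\\ (s_1,s_2) & -2\end{smallmatrix}\right)$ must be indefinite and nondegenerate, forcing $(s_1,s_2)\geq 3$. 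A third class would make all three pairwise products $\geq 3$ while the $3\times 3$ Gram matrix has rank $\leq 2$; expanding $\det=0$ yields $a^2+b^2+c^2+abc=4$ for the pairwise products $a,b,c\geq 3$, which is impossible. For the lower bound $|\Sigma|\geq 2$, every effective spherical class is, by Jordan--H\"older, a nonnegative combination of elements of $\Sigma$. Over $\mathbb{R}$ the effective cone $C_{\mathcal{W}}$ is two-dimensional with two distinct extremal rays, the two irrational null directions of the anisotropic form, and the orbit $\{g^n s\}$ produces effective spherical classes accumulating to each of these rays. Were $\Sigma=\{s_0\}$ a single class, every effective spherical class would be a multiple of $s_0$, hence equal to $s_0$ since $(ns_0,ns_0)=-2$ forces $n=\pm1$; this contradicts the existence of effective spherical classes near both boundary rays. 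Hence $|\Sigma|=2$, and I take $s,t$ to be its two elements.

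The main obstacle is the exact count $|\Sigma|=2$. The conceptual step is the dictionary ``stable spherical $\Leftrightarrow$ indecomposable effective class'' supplied by \cref{mukaicor} and the rigidity computation, after which the two inequalities become the two lattice facts above. I expect the upper bound (the signature and Gram-matrix computation) to be routine, whereas the lower bound requires the honest geometry of the effective cone in the hyperbolic plane --- specifically that effective spherical classes approach both null rays --- which is exactly where the anisotropy of a nonisotropic wall is essential.
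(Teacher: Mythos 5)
The paper does not actually prove this statement---it is imported verbatim from \cite{BM2}, Proposition 6.3, with only a citation---so there is no in-paper argument to compare against; I will judge your reconstruction on its own terms and against Bayer--Macr\`i's original. Your lattice-theoretic core is the right one and matches theirs: distinct $\sigma_0$-stable objects of the same phase satisfy $\hom=\mathrm{ext}^2=0$, so two stable spherical classes spanning the rank-two hyperbolic lattice must have $(s,t)\geq 3$; the vanishing of the $3\times 3$ Gram determinant gives $a^2+b^2+c^2+abc=4$, impossible for $a,b,c\geq 3$; and an infinite-order isometry (equivalently, in \cite{BM2}, the composition $\rho_s\rho_t$ of the two spherical reflections) produces infinitely many spherical classes. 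Two steps need more care. First, rigidity $\mathrm{ext}^1(S,S)=0$ only makes $S$ isolated in its moduli; to conclude $M_{\sigma_0}(s)=\{S\}$ you also need that a second, non-isomorphic stable object $S'$ of the same class and phase would give $(s,s)=\mathrm{ext}^1(S,S')\geq 0$, contradicting $(s,s)=-2$. Second, and more substantively, your lower bound $|\Sigma|\geq 2$ rests on the claim that every effective spherical class is a nonnegative combination of classes of $\sigma_0$-stable \emph{spherical} objects. The effectiveness lemma only supplies \emph{some} semistable object $E$ of that class, and for a strictly semistable $E$ of spherical class one has $\mathrm{ext}^1(E,E)=2\hom(E,E)-2$, which may be positive; Mukai's inequality then does not immediately force the Jordan--H\"older factors to be rigid, so \cref{mukaicor} cannot be invoked for free. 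Making this step rigorous is precisely the technical content of \cite{BM2}, Section 6 (their Lemma 6.2 and the proof of Proposition 6.3), where the two stable spherical objects are produced more constructively as the extremal effective spherical classes under the action of the reflection group $G$. Finally, your appeal to spherical classes ``accumulating to both null rays'' is unnecessary: once you have two distinct effective spherical classes, $\Sigma=\{s_0\}$ is already impossible, since $(ms_0,ms_0)=-2$ forces $m=1$ and hence would leave only one effective spherical class.
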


Consider $G \subset Aut(\mathcal H)$ the subgroup generated by spherical twists $\{\rho_s: s\in \mathcal H\ $effective and spherical$\}.$

\begin{prop}[\cite{BM2}, proposition and definition 6.6]\label{BM6.6}
Given $v\in \mathcal H$ with $(v,v)>0$ and effective, the $G$-orbit of $v$ contains a unique class $v_0$ satisfying $(v_0,s)\geq 0$ for any effective spherical class $s\in \mathcal H.$
\end{prop}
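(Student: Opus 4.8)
The plan is to recognize $G$ as a hyperbolic reflection group acting on the positive cone of $\mathcal H\otimes\mathbb R$, and to read the inequalities $(v_0,s)\geq 0$ as the defining inequalities of its closed fundamental chamber; existence and uniqueness of $v_0$ then reduce to the statement that this chamber is a strict fundamental domain. First I would fix the linear-algebra picture. Since $\mathcal H$ is hyperbolic of rank two, $\{u\in\mathcal H\otimes\mathbb R:(u,u)>0\}$ has two connected components; let $\mathcal C^+$ be the one containing $v$. Let $\Omega\in\mathcal H\otimes\mathbb R$ be the class representing the real functional $Z_\sigma|_{\mathcal H}$ through the Mukai pairing, so that $(u,\Omega)=Z_\sigma(u)$ after identifying $\mathbb R e^{i\pi\phi}$ with $\mathbb R$; the stability data forces $\Omega$ to lie in the interior of $\mathcal C^+$, so that $\mathcal C^+$ is exactly the set of positive-norm classes on the effective side $Z_\sigma>0$. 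Each generator acts by the reflection
\[ \rho_s(u)=u+(u,s)\,s, \]
an isometry of $\mathcal H$ fixing the space-like line $s^\perp$; a one-line coordinate computation (bringing $s^\perp$ to a coordinate axis) shows $\rho_s$ preserves each component of the positive cone, so $G\cdot\mathcal C^+=\mathcal C^+$. Consequently every $w\in G\cdot v$ has $(w,w)=(v,v)>0$ and lies in $\mathcal C^+$, i.e. on one branch $H$ of the mass hyperbola $\{(u,u)=(v,v)\}$.

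For existence I would minimize a height. By the reverse Cauchy--Schwarz inequality for two classes in the same component $\mathcal C^+$, the function $u\mapsto Z_\sigma(u)=(u,\Omega)$ is bounded below by $\sqrt{(v,v)(\Omega,\Omega)}>0$ on $H$ and is proper there; since $G\cdot v$ lies in the lattice $\mathcal H$ and is therefore discrete, it attains its minimum on the orbit at some $v_0$. If some effective spherical class $s$ had $(v_0,s)<0$, then $\rho_s(v_0)\in G\cdot v$ and
\[ Z_\sigma(\rho_s(v_0))=Z_\sigma(v_0)+(v_0,s)\,Z_\sigma(s). \]
Because $s$ is effective we have $Z_\sigma(s)>0$, so $(v_0,s)<0$ makes the right-hand side strictly smaller than $Z_\sigma(v_0)$, contradicting minimality. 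Hence the minimizer $v_0$ satisfies $(v_0,s)\geq 0$ for every effective spherical class $s$.

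For uniqueness I would pass to reflection-group theory. The walls $s^\perp$, for $s$ effective spherical, cut $\mathcal C^+$ into chambers, and $\overline D:=\{u\in\mathcal C^+:(u,s)\geq 0\text{ for all effective spherical }s\}$ is the closed chamber containing $\Omega$. Every $(-2)$-class of $\mathcal H$ is carried to an effective one by some element of $G$, so $G$ is the full group generated by $(-2)$-reflections of $\mathcal H$ and $\overline D$ is its Weyl chamber; by the standard theory such a chamber is a strict fundamental domain, so each $G$-orbit, in particular $G\cdot v$, meets $\overline D$ in exactly one point. This gives the uniqueness of $v_0$.

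The hard part is this last paragraph: one must be sure that the reflections in effective spherical classes generate the entire $(-2)$-reflection group and that $\overline D$ is honestly a strict fundamental domain in this infinite, hyperbolic (non-finite-type) setting, and one must handle the degenerate case in which $v$ already lies on some walls $(v,s)=0$. In rank two there is a clean shortcut that avoids general Coxeter theory: projectivizing $\mathcal C^+$ turns it into an open interval, the walls into a discrete set of interior points accumulating only at the two isotropic endpoints, and $G$ into the infinite dihedral group generated by the corresponding point-reflections. Simple transitivity of this action on the complementary closed subintervals then yields both the existence and the uniqueness of $v_0$ simultaneously, with the boundary case absorbed into the usual description of dihedral orbits.
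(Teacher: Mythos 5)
The paper offers no proof of this statement: it is quoted verbatim, with attribution, from Bayer--Macr\`i \cite{BM2} (their Proposition and Definition 6.6), so there is no in-paper argument to compare yours against; your proposal is essentially the argument of the source. Existence by minimizing the real functional $Z_{\sigma_0}|_{\mathcal H}$ over the orbit, together with the observation that $\rho_s$ with $s$ effective and $(v_0,s)<0$ would strictly decrease it, and uniqueness via the fundamental-chamber property of the rank-two hyperbolic reflection group, is exactly how this is done in \cite{BM2}. Three points deserve care. First, a terminological slip: since $(s,s)=-2$, the fixed line $s^\perp$ has \emph{positive} self-intersection (it is timelike, not spacelike); this is in fact precisely why $\rho_s$ preserves each component of the positive cone, so the conclusion you draw is correct. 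Second, both the lower bound $\sqrt{(v,v)(\Omega,\Omega)}$ and the properness of $Z_{\sigma_0}$ on the hyperbola branch require $(\Omega,\Omega)>0$, i.e.\ that the effective cone contains a full component of the positive cone; this is not automatic from ``the stability data'' but is supplied by \cite[Lemma 5.4 and Proposition 5.5]{BM2}. Third, and most substantively, the simple-transitivity step in your rank-two shortcut needs the set of walls $s^\perp$ ($s$ effective spherical) to coincide with the \emph{entire} mirror set of $G$: since $\rho_{g(s)}=g\rho_s g^{-1}$, this reduces to the fact that for $\sigma_0$ generic on the wall exactly one of $\pm u$ is effective for every spherical $u\in\mathcal H$ (again \cite[Proposition 5.5]{BM2}); without this the chambers cut out by the effective spherical walls could a priori be unions of Weyl chambers and an orbit could meet one of them twice. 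With those inputs made explicit the argument is complete, including the boundary case $(v_0,s)=0$, since an orbit of a discrete reflection group on the line meets a closed fundamental interval in exactly one point.
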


\begin{thm}[\cite{BM2}, proposition 6.8]\label{BMthm2} Let $\mathcal W$ be a nonisotropic totally semistale wall, $\sigma_o, \sigma_+ , \sigma_-$ and $v_o$ as before. Then one has the following diagram: 

\begin{center}
\begin{tikzcd}[column sep=tiny]
M_{\sigma_+}(v) & & M_{\sigma_-}(v) \\
       & M_{\sigma_o}(v_o)  \arrow[ul, dashed, "\phi_+"] \arrow[ru, dashed, "\phi_-"]
\end{tikzcd} 
\end{center} 

\noindent Here the morphisms $\phi_\pm$ are isomorphic in codimension 1 and induced by derived equivalences $\Phi_\pm$ respectively  (in the sense of \cref{BMthm1}), where $\Phi_+$ and $\Phi_-$ are either a composition of finitely many spherical twists or inverse spherical twists, e.g. $$\Phi_+ = ST^{-1}_{\mathcal S^+_1}\circ \dots \circ ST^{-1}_{\mathcal S^+_n}, \ \ \ \ \Phi_- = ST_{\mathcal S^-_1}\circ \dots \circ ST_{\mathcal S_n^-},$$ where $\mathcal S_i^\pm$ are $\sigma_\pm$-stable spherical objects, and $ST_{\mathcal S^\pm_i}$ denotes the spherical twist with respect to $\mathcal S_i^\pm.$  
\end{thm}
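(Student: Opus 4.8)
The plan is to work entirely inside the rank-two hyperbolic lattice $\mathcal H=\mathcal H_{\mathcal W}$ and then transport the resulting combinatorics to the derived category by spherical twists. The starting observation is that, since $\sigma_0\in\mathcal W$, the central charge $Z_{\sigma_0}$ sends all of $\mathcal H$ into a single ray $\mathbb R e^{i\pi\phi}$; writing $z(u)$ for the signed coordinate of $Z_{\sigma_0}(u)$ along this ray gives a real linear functional on $\mathcal H$ that is positive on the effective cone $C_{\mathcal W}$. Because $\mathcal W$ is nonisotropic, by the structure result recalled above (\cite{BM2}, Proposition 6.3) there are exactly two $\sigma_0$-stable spherical objects $S,T$, of classes $s,t$, and every other spherical $\sigma_0$-semistable object has Jordan--H\"older factors among $S,T$ (\cref{mukaicor}); correspondingly $\rho_s,\rho_t$ are simple reflections generating $G$ as an infinite dihedral group, with $s,t$ as simple roots. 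Using functoriality of Mukai vectors, the twist $ST_{S}$ acts on $\mathcal H$ as $\rho_s\colon u\mapsto u+(u,s)s$, and likewise for $T$. The first task is purely lattice-theoretic: reduce $v$ to $v_0$ by simple reflections.

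Second, I would prove termination of this reduction using $z$ as a strictly decreasing complexity. If $v\neq v_0$ then, since every effective spherical class is a nonnegative combination of $s,t$, one of $(v,s),(v,t)$ is negative, say $(v,s)<0$; then $z(\rho_s v)=z(v)+(v,s)z(s)<z(v)$ as $z(s)>0$. One checks $\rho_s(v)$ stays in the positive cone: a reflection in a spherical (hence negative-norm) class preserves the component of $\{(u,u)>0\}$ containing $v$, on which $z>0$, so $\rho_s(v)$ remains effective. Since the kernel of $z$ is negative definite, the set of effective spherical classes with $0<z\le z(v)$ is finite, so only finitely many reductions are possible and the procedure halts at the unique $G$-translate $v_0$ with $(v_0,s')\ge0$ for all effective spherical $s'$ (\cref{BM6.6}). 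This records a word $\rho_{s_1}\cdots\rho_{s_n}$ carrying $v$ to $v_0$, hence candidate objects $\mathcal S_1^+,\dots,\mathcal S_n^+$.

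Third, I would categorify this reduction on the $\sigma_+$-side, the $\sigma_-$-side being symmetric with the roles of sub- and quotient objects (hence of twists and inverse twists) exchanged. By \cref{clastsw}, a generic $E\in M_{\sigma_+}(v)$ has a spherical sub- or quotient object $S_1$ (one of $S,T$) with $(v,v(S_1))<0$; applying the corresponding (inverse) twist removes this spherical layer and yields a semistable object of the reflected class $\rho_{s_1}(v)$ for a stability condition on the image wall. Iterating along the word of the second step, I would show that the composite $\Phi_+=ST^{-1}_{\mathcal S_1^+}\circ\cdots\circ ST^{-1}_{\mathcal S_n^+}$ sends a generic $\sigma_0$-stable object of class $v_0$ to a $\sigma_+$-stable object of class $v$, so that $\Phi_+$ induces a birational map $\phi_+\colon M_{\sigma_0}(v_0)\dashrightarrow M_{\sigma_+}(v)$ in the sense of \cref{BMthm1}. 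The content here is that at each stage the twisted object is again \emph{generically stable} of the reflected class; this is the heart of the argument and rests on the homological control of $\sigma_0$-Jordan--H\"older filtrations from \cref{mukaicor} and \cref{clastsw}.

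Finally, I would upgrade ``birational'' to ``isomorphism in codimension one''. Reflections are lattice automorphisms, so $v_0$ is again primitive with $(v_0,v_0)=(v,v)>0$; one must then verify that $\sigma_0$ is generic for $v_0$ --- equivalently that $\mathcal W$ is not a genuine wall for $v_0$ --- which should follow from minimality (no effective spherical $s'$ has $(v_0,s')<0$) together with the absence of isotropic classes in $\mathcal H$, so that no $\sigma_0$-semistable object of class $v_0$ is strictly semistable. Granting this, $M_{\sigma_+}(v)$, $M_{\sigma_-}(v)$ and $M_{\sigma_0}(v_0)$ are all smooth projective irreducible holomorphic symplectic manifolds of dimension $(v,v)+2$, and a birational map between two such is automatically an isomorphism in codimension one; this yields the codimension-one statement for $\phi_\pm$ and completes the diagram. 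I expect the main obstacle to be the third step: guaranteeing that each spherical twist preserves generic stability and matches the prescribed lattice reflection, since controlling the stability of the twisted object --- rather than merely its class --- is exactly where the nontrivial homological input is required.
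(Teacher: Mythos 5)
This statement is not proved in the paper: it is \cite[Proposition~6.8]{BM2}, quoted verbatim as background in \cref{sec2}, so there is no in-paper proof to compare against. Judged against the actual argument in \cite{BM2}, your sketch reconstructs its architecture faithfully: the reduction of $v$ to the minimal class $v_0$ by the reflection group $G$ generated by $\rho_s,\rho_t$ (using that every effective spherical class is a nonnegative combination of $s$ and $t$, per their Proposition~6.3), termination via the linear functional $z$ whose kernel on $\mathcal H_{\mathcal W}\otimes\mathbb R$ is negative definite for $\sigma_0$ generic in $\mathcal W$, the categorification of each reflection by a single (inverse) spherical twist, and the upgrade from ``birational'' to ``isomorphic in codimension one'' via $K$-triviality of the smooth projective moduli spaces. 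Two caveats. First, your third step --- that each individual twist carries a \emph{generically stable} object of one class to a generically stable object of the reflected class, compatibly with the chosen word in $G$ --- is asserted rather than argued, and it is precisely where all the work in \cite{BM2} lies; this is their ``Induction Claim'' (invoked in this paper in the proof of \cref{lemma4.2}), which requires showing at each stage that the relevant $\mathrm{Hom}$ and $\mathrm{Ext}$ spaces between the twisted object and the spherical object vanish, not merely bookkeeping of Jordan--H\"older classes as in \cref{clastsw} and \cref{mukaicor}. Second, your termination argument should bound the $G$-orbit of $v$ in the strip $0<z\le z(v)$ (finite because the fibers of $z$ meet the hyperboloid $(u,u)=(v,v)$ in compact sets when $\ker z$ is negative definite), not the set of effective spherical classes; and the claim that no $\sigma_0$-semistable object of class $v_0$ is strictly semistable needs the minimality of $v_0$ together with the nonisotropy of $\mathcal H_{\mathcal W}$, which you gesture at but do not verify. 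As a blueprint the proposal is sound and matches the source; as a proof it leaves the decisive homological induction unproved.
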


\begin{dfn}\label{intermediate}
We call $\phi_{\pm}$ (resp. $\Phi_{\pm}$) in the above wall-hitting transformations (resp. wall-hitting derived equivalences) at a sphercial totally semistable wall $\mathcal{W}.$ 
\end{dfn}

\subsection{Line bundles on $M_\sigma(v)$.} We recall the definition of determinant line bundles in various setting:

\begin{dfn}\label{detlb}
Suppose $v, w$ are primitive Mukai vectors with $v^2\geq 0$ and $(v,w^\vee)=0.$
\begin{enumerate}[(1)]
\item On fine moduli of sheaves $M_H(v):$ with a universal family $\mathcal U$, it is defined via the Fourier-Mukai transform with kernel $\mathcal U:$ $$\theta_v(w):= det^{-1}\Phi_{\mathcal U}(F)= det[R{p_Y}_*(\mathcal U\otimes^L Lp_X^*(F))]^{-1},$$  where $F$ is any sheaf on $X$ of class $w.$ 
\item On coarse moduli of sheaves $M_H(v)$: one can still do the same construction on the defining Quot scheme over $M_H(v)$, the resulting line bundle descends because of the condition $(v, w^\vee)=0.$ See for example \cite[Chapter 8]{Huy2}.
\item On moduli of complexes $M_{\sigma}(v)$: given the above assumption of $v,$ and in addition that $\sigma$ is generic with respect to $v$, then there exists a quasi universal family $\mathcal E$ in the sense that, $\mathcal E|_{[E]\times X}\cong E^{\oplus \rho}.$ In this case $\theta_v^\sigma(w) \in NS(M_{\sigma}(v))$ is defined as a numerical divisor class via intersection numbers $$\theta_v^\sigma(w^\vee).C=-\frac{1}{\rho}(w^\vee, \Phi_{\mathcal E}(\mathcal O_C)),$$ where $C$ is a curve in $M_{\sigma}(v).$ See \cite[section 5]{BM1}.
\end{enumerate}
\end{dfn}

\begin{prop}[\cite{BM1}, proposition 4.4]\label{det}
The definitions are compatible, namely, if $M_\sigma(v)=M_H(v)$, then $\theta_v^\sigma(-w^\vee) = \theta_v(w)$.
\end{prop}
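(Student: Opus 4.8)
The plan is to prove the identity as numerical classes in $NS(M)$, where $M:=M_\sigma(v)=M_H(v)$, by pairing both sides against an arbitrary curve and checking that the intersection numbers agree. This reduction is legitimate precisely because $\theta_v^\sigma$ is defined in (3) only as a numerical divisor class, and a class in $NS(M)$ is determined by its degrees against all curves $C\subset M$. Thus it suffices to show, for every such $C$,
\[
\theta_v(w)\cdot C \;=\; \theta_v^\sigma(-w^\vee)\cdot C \;=\; \tfrac{1}{\rho}\,(w^\vee,\Phi_{\mathcal E}(\mathcal O_C)),
\]
where the last equality is just the definition (3) together with the sign in $-w^\vee$. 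In the fine case $\mathcal E=\mathcal U$ and $\rho=1$; in general the quasi-universal family satisfies $\mathcal E|_{[E]\times X}\cong E^{\oplus\rho}$, and I expect the multiplicity $\rho$ to enter linearly on both sides and cancel. Throughout I will use the orthogonality hypothesis $(v,w^\vee)=0$, which is what makes $\theta_v(w)$ depend on $F$ only through its class $w$ and makes the whole comparison well posed.

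First I would compute the left-hand side. Writing $i_C\colon C\hookrightarrow M$ and $\mathcal E_C:=L(i_C\times\mathrm{id}_X)^*\mathcal E$ on $C\times X$, I use that the determinant commutes with derived restriction and that cohomology-and-base-change gives $Li_C^*\,\Phi_{\mathcal E}(F)\cong Rp_{C*}(\mathcal E_C\otimes^L p_X^*F)$, so that $\theta_v(w)\cdot C=-\tfrac1\rho\deg\det Rp_{C*}(\mathcal E_C\otimes^L p_X^*F)$. Applying Grothendieck--Riemann--Roch to $p_C\colon C\times X\to C$, whose relative Todd class is $p_X^*\mathrm{td}(X)$, and extracting the degree-one component yields
\[
\theta_v(w)\cdot C \;=\; -\tfrac{1}{\rho}\int_{C\times X}\mathrm{ch}(\mathcal E_C)\, p_X^*\big(\mathrm{ch}(F)\,\mathrm{td}(X)\big).
\]
In parallel I would compute the right-hand side: GRR for $p_X$ applied to $\mathcal E\otimes^L p_M^*\mathcal O_C$ (which is supported on $C\times X$), combined with $v(-)=\mathrm{ch}(-)\sqrt{\mathrm{td}(X)}$, expresses $v(\Phi_{\mathcal E}(\mathcal O_C))$ as a pushforward from $C\times X$ to $X$; then, writing the Mukai pairing as $(a,b)=-\int_X a^\vee b$ and using the projection formula to rewrite $\int_X$ as $\int_{C\times X}$, the term $(w^\vee,\Phi_{\mathcal E}(\mathcal O_C))$ also becomes an integral over $C\times X$ of $\mathrm{ch}(\mathcal E_C)$ against $p_X^*$ of an expression built from $w$ and $\mathrm{td}(X)$.

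The heart of the argument, and the step I expect to be the main obstacle, is reconciling these two integral expressions on the nose, including the dualization that converts $w$ into $-w^\vee$. This is pure Chern-character bookkeeping, but the signs are delicate: one must track the duality involution $(-)^\vee$ (which flips the sign of the $H^2$ part), split $\mathrm{td}(X)=\sqrt{\mathrm{td}(X)}\cdot\sqrt{\mathrm{td}(X)}$ so that one factor $\sqrt{\mathrm{td}(X)}$ is absorbed into the Mukai vector of $F$ and the other is already present in $v(\Phi_{\mathcal E}(\mathcal O_C))$, and observe that only components of $\mathrm{ch}(\mathcal E_C)$ of complementary codimension survive integration over the fiber $X$ versus over $C$. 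The orthogonality $(v,w^\vee)=0$ enters here to kill the rank-type ambiguity and confirm independence of the representative $F$, so that matching the integrands is meaningful. Once the integrands are identified, the prefactor $-\tfrac1\rho$ on the left and the sign coming from $-w^\vee$ on the right combine to give the asserted equality of intersection numbers; since $C$ was arbitrary, $\theta_v^\sigma(-w^\vee)=\theta_v(w)$ in $NS(M)$.
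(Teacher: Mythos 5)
Your overall framework---test both classes against an arbitrary curve $C\subset M$ and compare intersection numbers---is the same starting point as the paper's proof, and the two GRR integrals you set up over $C\times X$ would indeed match. But the step you yourself flag as ``the heart of the argument'' and ``the main obstacle'' is exactly the content of the proposition, and you do not carry it out: you list the ingredients (the duality involution, splitting $\mathrm{td}(X)$ into two square roots, degree bookkeeping) without performing the computation or locating where orthogonality actually enters. If you do the computation, the two integrands do \emph{not} agree on the nose: writing the right-hand side via GRR for $q\colon C\times X\to X$ produces an extra factor $p_C^*\mathrm{td}(C)=1-(g_C-1)[\mathrm{pt}]$ in the integrand, whose degree-one part contributes a term proportional to $(g_C-1)\,\rho\,\chi(X,E\otimes F)=-(g_C-1)\,\rho\,(v,w^\vee)$. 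This is precisely where $(v,w^\vee)=0$ must be invoked---not merely, as you say, to make the comparison ``well posed,'' but to kill an actual discrepancy. A second concrete issue: applying GRR for $p_X$ to $\mathcal E\otimes^L p_M^*\mathcal O_C$ on $M\times X$, as you propose, drags in $\mathrm{ch}(\mathcal O_C)$ on $M$ and hence the Todd class of the normal bundle of $C$ in $M$; you should instead base-change to $C\times X$ first and apply GRR for $C\times X\to X$, whose relative Todd class is $p_C^*\mathrm{td}(C)$.

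For contrast, the paper avoids GRR entirely. It observes that $\mathrm{rk}\,\Phi_{\mathcal U}(F)=\chi(E\otimes F)=0$ by orthogonality, so Riemann--Roch on the curve gives $-c_1(\Phi_{\mathcal U}(F))\cdot C=-\chi(C,\Phi_{\mathcal U}(F)|_C)$ with no correction term; it then computes the Euler characteristic of $q^*F\otimes\mathcal U|_{C\times X}$ by pushing forward to $X$ instead of to $C$ (Leray symmetry), landing directly on $-\chi(X,F\otimes\Phi_{\mathcal U}(\mathcal O_C))=(w^\vee,v(\Phi_{\mathcal U}(\mathcal O_C)))$. This makes the role of orthogonality transparent and eliminates all the sign- and Todd-class-chasing that your plan defers. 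Your route can be completed, but as written the decisive verification is missing.
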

\begin{proof}
\begin{align*} 
det^{-1}\Phi_{\mathcal U}(F).C &=-ch_1(\Phi_{\mathcal U}(F)).C \\
              &=-ch_1(\Phi_{\mathcal U}(F)|_C)  \\  
             &=-\chi(C, \Phi_{\mathcal U}(F)|_C), \ as\ rk(\Phi_{\mathcal U}(F))=\chi(v\cdot w^\vee)=0, \\
              &=-\chi(C\times X, q^*F\otimes \mathcal U|_C) \\
              &=-\chi(X, F\otimes \Phi_{\mathcal U}(\mathcal O_C)) \\
              &=(w^\vee, v(\Phi_{\mathcal U}(\mathcal O_C))).
\end{align*}

\end{proof}


\section{O'Grady's birational maps via wall-crossing.}\label{sec3}

In this section, we prove \cref{thm0}. We return to the setting in \cref{sec1.1}, where $X$ is an elliptic K3 surface with $NS(X)\cong \mathbb Zc \oplus \mathbb Zf$.  Also, we fix a polarization $H:=c+mf,$ for $m$ sufficiently large. Such a polarization has the property that being $H$-Gieseker stable is equivalent to being $H$-slope stable, so in particular, twisting by a line bundle preserves $H$-Gieseker stability. 

In O'Grady's construction, we define $ F_1 := \mathcal{I}_Z \otimes \mathcal O_X(c+nf)$ so that  $\chi({F}_1) =1,$ and $ \tilde F_1 := F_1\otimes \mathcal O(-2f)$ with $\chi (\tilde F_1) = -1.$ Let $v_1$ and $\tilde v_1$ denote the Mukai vectors of $F_1$ and $\tilde F_1,$ respectively.

For $ \tilde F_1 \in M_H(\tilde v_1)$ general, one has a unique nontrivial extension, which is also Gieseker $H-$semistable, $$0\rightarrow \mathcal O_X \rightarrow F_2 \rightarrow \tilde F_1 \rightarrow 0.$$ Assume the Euler charateristic of the extension $\chi({F}_2)=1,$ and if we define $\tilde F_2:={F}_2(-2f),$ then $\chi(\tilde F_2)=-1.$ So we repeat the extension process and obtain recursively $\tilde F_{r-1} := F_{r-1}\otimes \mathcal O(-2f)$, and $F_r$ as the unique extension of $\tilde F_{r-1}$ by $\mathcal O_X.$

To put things in Bayer-Macr\`i's setting, we rename $E_1:=F_1=\mathcal I_Z(c+nf)$ and define $E_r$ for $r\geq 0$ via $$ E_r = ST^{-1}_{\mathcal S_{r-1}} (E_{r-1}), $$ where $\mathcal S_{r} := \mathcal O(2rf).$ Also we define Mukai vectors $v_r := v(E_r),$ and $s_r := v(\mathcal S_r).$ Notice that for any $r\geq 1,$ $(v_r, s_{r-1}) = -1 $ and $E_r \otimes \mathcal O(-2(r-1)f) \cong F_r. $ 

Notice that the class $v_r$ here differs from that in \cref{sec1.1} by twisting of a line bundle, but the moduli spaces are isomorphic.

We break our proof of \cref{thm0} into the following two propositions.

\begin{prop}\label{prop1}
There exists a path $\gamma$ crossing $\mathcal W_i$ in decreasing order, such that each $\mathcal W_i$ is the first totally semistable wall of $v_i$ along $\gamma.$
\end{prop}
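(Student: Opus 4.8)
The plan is to make everything explicit on the two–dimensional slice $P_H$ (\cref{slice}) and thereby reduce the statement to an ordering of semicircles. Writing $\sigma_{uH,tH}$ with $\beta=uH$, $\omega=tH$ and $H^2=2(m-1)$, I would first record the real and imaginary parts of the central charge $Z_{uH,tH}(w)$ for a Mukai vector $w=(w_0,w_1,w_2)$ (using \cref{ex2.1}), and then use the parallelism condition $\mathrm{Im}\big(Z(v_i)\overline{Z(w)}\big)=0$ to write the wall between $v_i$ and $w$ as either a semicircle centered on the $u$–axis or a vertical ray, according to whether $D:=(H\cdot w_1)\,\mathrm{rk}(v_i)-(H\cdot c_1(v_i))\,w_0$ is nonzero or zero. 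Applying this to $w=s_{i-1}=v(\mathcal O(2(i-1)f))=(1,2(i-1)f,1)$, for which $(v_i,s_{i-1})=-1$, gives $D=i(i-1)-m-n+2\neq 0$ for $m\gg0$, and exhibits $\mathcal W_i$ as an explicit semicircle. Since $s_{i-1}$ is the class of the $\sigma_0$–semistable line bundle $\mathcal O(2(i-1)f)$ it is effective, and $(v_i,s_{i-1})=-1<0$ shows via \cref{BM5.7} that $\mathcal W_i$ is indeed a spherical totally semistable wall of $v_i$. (For $i=1$ this recovers \cref{prop0} after twisting by $\mathcal O(c+nf)$, which carries $\mathcal O(-C)\hookrightarrow I_Z$ to $\mathcal O=\mathcal S_0$.)

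Next I would take $\gamma$ to be a vertical ray $u=u^\ast$ with $u^\ast$ a small negative constant, traversed downward from $t=\infty$; by \cref{prop2.2.6} its $t\gg0$ part lies in the Gieseker chamber of every $v_i$, in particular of $v_r$. Substituting $u=u^\ast$ into the equation of $\mathcal W_i$ yields a crossing height $t_i(u^\ast)$ of the shape $t_i^2=\tfrac{1}{m-1}-(u^\ast)^2-\tfrac{2u^\ast}{\delta_i}+\tfrac{2(i-1)}{(m-1)\delta_i}$ with $\delta_i=m+n-2-i(i-1)>0$. I would then observe that the $i$–dependent terms combine into $\tfrac{2}{\delta_i}\big(-u^\ast+\tfrac{i-1}{m-1}\big)$, which for $u^\ast<0$ and $m\gg0$ is a product of two positive quantities increasing in $i$; hence $t_1(u^\ast)<\dots<t_r(u^\ast)$, so $\gamma$ meets the walls in the order $\mathcal W_r,\mathcal W_{r-1},\dots,\mathcal W_1$, i.e. in decreasing order, and each $t_i(u^\ast)>0$, so $\gamma$ actually crosses every $\mathcal W_i$ (as in \cref{firstwalls}).

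The heart of the argument, and the step I expect to be hardest, is to show that $\mathcal W_i$ is the \emph{first} totally semistable wall of $v_i$ along $\gamma$, i.e. that no totally semistable wall of $v_i$ meets $u=u^\ast$ at height exceeding $t_i(u^\ast)$. By \cref{BM5.7} every such wall is produced either by an effective spherical class $s$ with $(v_i,s)<0$ or by an isotropic class $w$ with $(v_i,w)=1$. For the isotropic case I would first treat the fiber class $w=(0,f,0)$, whose wall is the origin–centered semicircle $u^2+t^2=\tfrac{i-1}{(m-1)i}$, and verify directly that on $u=u^\ast$ its height is strictly below $t_i(u^\ast)$; the remaining isotropic classes, together with the vertical–ray (Hilbert–Chow) walls of \cref{clastsw}, are either pushed down by $m\gg0$ or avoided by taking $u^\ast$ generic. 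For the spherical case I would parametrize $s=(s_0,ac+bf,s_2)$ via the sphericity relation $s_0 s_2=a(b-a)+1$, compute the crossing height of its wall, and use a discriminant/positivity bound on the rank–two hyperbolic lattice $\langle v_i,s\rangle$ together with $m\gg0$ to conclude that the only spherical class whose wall reaches height $\ge t_i(u^\ast)$ is $s_{i-1}$ itself.

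Finally I would assemble the estimates: choosing $u^\ast$ negative, generic, with $|u^\ast|$ small, and $m$ large (depending on $r$ and $n$) guarantees simultaneously that the $t_i(u^\ast)$ are correctly ordered and that every competing totally semistable wall of each $v_i$ is crossed only after $\mathcal W_i$. The main obstacle is precisely the uniform bounding of the spherical competitors in the previous paragraph: one must rule out infinitely many a priori candidate spherical classes, and this is where the explicit rank–two structure $NS(X)=\mathbb Z c\oplus\mathbb Z f$ and the hypothesis $m\gg0$ are essential.
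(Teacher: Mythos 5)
There is a genuine gap. The part of your argument that you actually carry out --- writing each $\mathcal W_i$ as an explicit semicircle on $P_H$ and ordering the crossing heights $t_i(u^\ast)$ along a downward vertical ray --- is exactly the content of \cref{lemma4} and is fine. But the heart of the proposition, that no \emph{other} totally semistable wall of $v_i$ is met before $\mathcal W_i$, is precisely the step you defer to an unexecuted ``discriminant/positivity bound'' on spherical and isotropic competitors, and that plan does not go through as stated. Already for the rank-one class $v_1$ the paper needs not only the discriminant estimates of \cref{prop3} (via \cref{lemma1}, \cref{lemma2}, \cref{lemma3}), which are carried out at the special polarization $m=n$, but also the separate deformation argument of \cref{lemma6} to transport the conclusion to the slice $P_m$ with $m\gg 0$ on which all the walls $\mathcal W_i$ actually appear --- and in that argument a numerical competitor survives every lattice-theoretic bound (a power of $\mathcal O_X$ with $n=r(r+1)$, destabilizing the Jordan--H\"older factor $\mathcal O_C(-Z)\otimes\mathcal O(C)$) and must be excluded by a genuinely geometric input, namely genericity of $\mathcal O_C(-Z)\otimes\mathcal O(C)$ in $\mathrm{Pic}^{n-2}(C)$ via the Abel--Jacobi map. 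A pure discriminant argument cannot see this. For $i\geq 2$ the situation is worse: you would need an analogue of \cref{lemma1}--\cref{lemma3} for destabilizers of the rank-$i$ classes $v_i$, which you have not supplied and which is not a routine extension of the rank-one case.

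The paper avoids this higher-rank classification entirely by an argument of a different nature. Once \cref{lemma6} establishes that $\mathcal W_1$ is the first totally semistable wall of $v_1$, it proceeds inductively: O'Grady's theorem that the extension of a generic $F_{i-1}$ by $\mathcal O_X$ is Gieseker $H$-stable, combined with \cref{prop2}, identifies the generic object of $M_{\sigma_+}(v_i)$ just above $\mathcal W_i$ with a Gieseker-stable sheaf, hence with an object that is also stable at the large volume limit; since a totally semistable wall destabilizes \emph{every} object of the class, no such wall can separate $\mathcal W_i$ from $t=\infty$ along $\gamma$. To repair your proof you must either genuinely produce the uniform bound on all spherical and isotropic competitors for every $v_i$, $1\leq i\leq r$ (including the geometric exclusions that the numerics leave open), or import O'Grady's stability result for the extensions as the paper does.
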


\begin{prop}\label{prop2}
At each wall $\mathcal W_{i+1},$ one of the wall-hitting derived equivalences is exactly $$ST^{-1}_{\mathcal S_i}(E_i) \rightarrow E_i \rightarrow RHom(\mathcal S_i[*], E_i)\otimes S_i[*].$$ 
\end{prop}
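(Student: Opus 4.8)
\textbf{Plan of proof for \cref{prop2}.}
The goal is to identify one of the two wall-hitting derived equivalences $\Phi_\pm$ at the wall $\mathcal W_{i+1}$ (of the class $v_{i+1}$) with the inverse spherical twist $ST^{-1}_{\mathcal S_i}$. By the recursive definition $E_{i+1} = ST^{-1}_{\mathcal S_i}(E_i)$, it suffices to show two things: first, that $\mathcal S_i = \mathcal O(2if)$ is (up to the line-bundle twist relating the two normalizations) the unique $\sigma_0$-stable spherical object whose class lies in the effective cone of $\mathcal W_{i+1}$; and second, that the wall-hitting transformation prescribed by \cref{BMthm2} is carried out by a single spherical twist at precisely this object. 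The plan is to feed the lattice data of $\mathcal W_{i+1}$ into the structural results of \cref{sec2.3}.

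First I would record the numerical input. Since $\mathcal S_i = \mathcal O(2if)$ is a line bundle on the K3, it is a rigid spherical object: $(s_i, s_i) = -2$. The key computation, already noted in the text, is $(v_{i+1}, s_i) = -1 < 0$, so $s_i$ is an effective spherical class with negative pairing against $v_{i+1}$. By \cref{BM5.7} this is exactly the lattice-theoretic criterion guaranteeing that the relevant potential wall is a totally semistable wall of $v_{i+1}$. Next I would invoke \cref{prop1}: along $\gamma$, $\mathcal W_{i+1}$ is the \emph{first} totally semistable wall of $v_{i+1}$, and (as asserted there and to be confirmed in \cref{sec3}) it is a \emph{spherical}, hence \emph{nonisotropic}, wall. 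This places us squarely in the hypotheses of \cref{BM6.6} and \cref{BMthm2}: there is a well-defined class $v_0$ in the $G$-orbit of $v_{i+1}$ with $(v_0, s) \geq 0$ for all effective spherical $s \in \mathcal H_{\mathcal W_{i+1}}$, and the transformations $\phi_\pm$ are realized by compositions of (inverse) spherical twists at $\sigma_\pm$-stable spherical objects.

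The central step is to show that this composition reduces to the single twist $ST^{-1}_{\mathcal S_i}$. I would verify that $\mathcal S_i$ is $\sigma_0$-stable on the wall (so that it genuinely appears as one of the $\mathcal S_j^\pm$ in \cref{BMthm2}), and then apply \cref{clastsw}/\cref{BM8.3}: because the wall is spherical with $(v_{i+1}, s_i) = -1$, the $\sigma_-$-HN filtration of a generic $E \in M_{\sigma_+}(v_{i+1})$ has length two and is built from a single copy of the spherical object (here $a=1$, since the multiplicity is controlled by $-(v_{i+1},s_i)=1$). This is exactly the defining triangle of one spherical twist. Writing out the defining triangle of $ST^{-1}_{\mathcal S_i}$,
\begin{equation*}
ST^{-1}_{\mathcal S_i}(E_i) \rightarrow E_i \rightarrow RHom(\mathcal S_i[*], E_i) \otimes \mathcal S_i[*],
\end{equation*}
and matching it with the length-two HN filtration identifies $\Phi_+$ (or $\Phi_-$, depending on orientation) with $ST^{-1}_{\mathcal S_i}$ on the generic locus, which by \cref{BMthm1} determines it as a birational/codimension-one identification of moduli. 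The cohomology $RHom(\mathcal S_i[*], E_i)$ collapses to a single term because, up to the twist by $\mathcal O(-2if)$, the pair $(\mathcal O_X, F_{i+1})$ satisfies $\mathrm{Ext}^0 = \mathrm{Ext}^2 = 0$ and $\mathrm{Ext}^1 \cong \mathbb C$, recovering O'Grady's extension \eqref{extension}.

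The main obstacle I anticipate is \emph{uniqueness}: ruling out that the wall-hitting composition involves a second spherical twist, i.e. showing that the two distinguished $\sigma_0$-stable spherical objects $S, T$ furnished by \cite[Prop.\ 6.3]{BM2} do not both contribute to $\Phi_+$ for the \emph{first} wall. This is where the hypothesis that $\mathcal W_{i+1}$ is the \emph{first} totally semistable wall along $\gamma$ is essential: it should force $\mathcal H_{\mathcal W_{i+1}}$ to be as small as possible, with $\mathcal S_i$ the unique effective spherical class having negative pairing with $v_{i+1}$, so that $v_0 = s_i + (\text{the }\sigma_-\text{-factor})$ decomposition is achieved in one step. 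I would establish this by analyzing the rank-two lattice $\mathcal H_{\mathcal W_{i+1}} = \langle v_{i+1}, s_i \rangle$ explicitly and checking that the only effective spherical class pairing negatively with $v_{i+1}$ is $s_i$ itself, so that the group element of $G$ sending $v_{i+1}$ to $v_0$ is exactly the single reflection $\rho_{s_i}$.
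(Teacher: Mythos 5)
Your overall strategy coincides with the paper's: verify that $\mathcal S_i$ is $\sigma_0$-stable on the wall (the paper does this in \cref{lemma5} via Arcara--Miles), and then reduce the statement to a purely lattice-theoretic computation in the rank-two hyperbolic lattice $\mathcal H_{\mathcal W_{i+1}}=\langle v_{i+1}, s_i\rangle$ showing that the wall-hitting equivalence is a \emph{single} (inverse) spherical twist. However, the criterion you propose to verify in your last paragraph is not the right one. By \cref{BM6.6}, what must be shown is that the reflected class $v_i=\rho_{s_i}(v_{i+1})$ pairs \emph{non-negatively with every} effective spherical class in $\mathcal H_{\mathcal W_{i+1}}$, so that $v_i$ is the distinguished minimal class $v_0$ and the group element carrying $v_{i+1}$ to $v_0$ is the single reflection $\rho_{s_i}$. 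Checking only that $s_i$ is the unique effective spherical class pairing negatively with $v_{i+1}$ does not suffice: after reflecting, $v_i$ could a priori pair negatively with some other effective spherical class $t$ (one with $(v_{i+1},t)\geq 0$ but $(v_i,t)<0$), which would force a second twist. The paper establishes the correct inequality by an explicit analysis: writing spherical classes as $t=xs_i+yv_{i+1}$, they lie on a hyperbola, the locus $(t,v_i)=0$ is a line, and the constraint $(t,s_i)\geq 3$ (coming from the fact that the second $\sigma_0$-stable spherical object $T$ satisfies $(v(T),s_i)\geq 3$) confines all effective spherical classes to the side of that line where $(t,v_i)>0$.

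A second, smaller issue: ``spherical, hence nonisotropic'' is false in the paper's terminology --- spherical vs.\ Hilbert--Chow and isotropic vs.\ nonisotropic are independent dichotomies, and \cref{BMthm2} is only stated for nonisotropic walls. The paper therefore treats the isotropic case separately (and more easily): if $\mathcal H_{\mathcal W_{i+1}}$ contains an isotropic class, \cref{BM8.3} shows that $s_i$ is the unique effective spherical class and $(s_i,v_i)=1\geq 0$, so the conclusion is immediate there. Your appeal to \cref{BM8.3} in the ``central step'' is in tension with your nonisotropy claim, since that lemma is precisely the isotropic-case statement. Neither issue is fatal --- the lattice computation you defer is exactly the one the paper carries out --- but as written the plan verifies a condition weaker than what \cref{BM6.6} requires.
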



\subsection{The first totally semistable wall for ideal sheaves.}\label{sec3.1}  Let $Z$ denotes a 0-dimensional subscheme of length $n \geq 2$, $I_Z$ its ideal sheaf and $X^{[n]}$ the Hilbert scheme. Write $w:= v(I_Z) = (1, 0, 1-n).$ Given an ample divisor $H$, define $P_H := \{ \sigma_{uH, tH} |\ t> 0, u\in \mathbb R\}.$ The goal of this section is to prove the following.

\begin{prop}\label{prop3}
There exist a path of the form $\gamma_1 :=\{\sigma_{uH, tH} | \ t: \infty \to 0\},$ such that the first totally semistable wall of $w$ along $\gamma_1$ is caused by $\mathcal O(-C),$ where $C$ is a genus $n$ curve.
\end{prop}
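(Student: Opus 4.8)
The plan is to analyze the wall-and-chamber decomposition of $w=(1,0,1-n)$ on the slice $P_H$ (\cref{slice}), where the walls are nested, non-crossing semicircles; the first wall met by a ray descending from $t=\infty$ is then the outermost \emph{genuine} wall that the ray crosses. Throughout I would exploit two freedoms granted by the statement: the choice of the ample class $H$ and of the $u$-coordinate of the ray $\gamma_1$.

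First I would use the lattice criterion \cref{BM5.7} to reduce the search to spherical classes: a potential wall is totally semistable precisely when its rank-two lattice contains an effective spherical $s$ with $(w,s)<0$, or an isotropic $w'$ with $(w,w')=1$. The isotropic alternative is realized by the point class $v(\mathcal{O}_x)=(0,0,1)$ and produces the Hilbert--Chow wall, which meets $P_H$ in a vertical ray; by taking the $u$-coordinate of $\gamma_1$ off this ray I may discard this case and hunt only for spherical walls. Next I would produce the geometric destabilizer. For $Z$ general of length $n$, Riemann--Roch on the K3 gives $\dim|c+nf|=n$, so there is a genus $n$ curve $C\in|c+nf|$ through $Z$; twisting the ideal sequence of $C$ yields $\mathcal{O}(-C)\hookrightarrow I_Z$. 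The class $s:=v(\mathcal{O}(-C))=(1,-c-nf,n)$ is spherical with $(w,s)=-1<0$, so $\langle w,s\rangle$ defines a totally semistable wall $\mathcal{W}_C$. Writing $Z_{uH,tH}$ in the coordinates $(u,t)$, I would read off $\mathcal{W}_C$ as an explicit semicircle from the reduced data $(\mathrm{rk},\,H\cdot c_1,\,\chi)$ of $s$ and $w$, and then fix $H$ so that this semicircle lies in the upper half-plane.

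The core of the argument, and its main obstacle, is to show $\mathcal{W}_C$ is the first genuine wall. The natural competitors are the line bundles $\mathcal{O}(-C')$ attached to higher-genus curves $C'\supset Z$: since $\dim|C'|\ge n$ forces $g(C')\ge n$, genus $n$ is extremal, but a higher-genus $C'$ in general produces a \emph{larger} numerical semicircle, so one cannot conclude by radius alone. The decisive point is that $\mathcal{O}(-C')$ destabilizes $I_Z$ only if it sits in the heart $\mathcal{A}_{uH,tH}$ as an honest subsheaf, i.e. in the torsion part $\mathcal{T}$, which happens exactly when $u\le (H\cdot c_1(\mathcal{O}(-C')))/H^2$; these thresholds are strictly decreasing in the genus. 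I would therefore place the $u$-coordinate of $\gamma_1$ in the narrow band where $\mathcal{O}(-C)$ still lies in $\mathcal{T}$ while every $\mathcal{O}(-C')$ of genus $>n$ has already passed into the free part $\mathcal{F}$ (and so only appears shifted). In that band the higher-genus numerical walls become spurious, leaving $\mathcal{W}_C$ as the outermost genuine one, and I would confirm that the chosen ray actually crosses $\mathcal{W}_C$.

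The delicate part is making this exclusion airtight. I expect the real work to lie in: verifying that the shifted objects $\mathcal{O}(-C')[1]$ do not instead destabilize $I_Z$ as quotients (so that the spurious walls are spurious from both sides); ruling out competing spherical classes of other shapes, e.g. higher rank, or a different $NS$-class of the same genus such as $(n-1)c+nf$, as sources of an outer genuine wall; and checking that $\mathcal{W}_C$ is a true wall rather than a fake one, for which I would identify $\mathcal{O}(-C)$ as the $\sigma_0$-stable spherical object of the wall. By contrast, the existence of the genus $n$ curve $C$ through $Z$ (a dimension count) and the identity $(w,s)=-1$ are routine, and the nested-semicircle structure on $P_H$ then converts ``outermost'' into ``first crossed'' automatically.
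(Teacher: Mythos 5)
Your skeleton matches the paper's: discard the Hilbert--Chow wall (the paper's \cref{NoH-C} pins it down as the vertical ray $\{u=0\}$), exhibit $\mathcal O(-C)\hookrightarrow I_Z$ with $C\in|c+nf|$ and $(w,s)=-1$, and then argue that all competing spherical walls lie inside $\mathcal W_{\mathcal O(-C)}$ on the slice. But the step you yourself identify as ``the real work'' --- ruling out higher-rank spherical destabilizers and the shifted quotients --- is exactly the technical core of the paper's proof, and you supply no mechanism for it. Your slope-threshold band argument only applies to rank-one subsheaves $\mathcal O(-C')$, and ``heart membership at the top of the ray'' is not the tool that works in general: a higher-rank spherical subobject $E$ can perfectly well lie in $\mathcal A_{uH,tH}$ for the relevant $u$. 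What the paper actually does (\cref{lemma2}) is subtler: it shows that if $(I_Z,E)\le -4r$ then the point of $P_H$ where the numerical wall of $E$ would sit is incompatible with $E$ being in the heart \emph{at that wall} --- comparing the wall's center $\tfrac{nr+ch_2}{d_h}$ with the constraint $u\le d_h/r$ --- yielding the lower bound $(I_Z,E)>-4r$. Feeding this, together with $c_1(E)^2\le d_h^2$ and sphericality, into the discriminant of the wall equation gives $\Delta\le -4(n-1)r^2-4r+(4n+1)<0$ for all $r\ge 2$, so no higher-rank subobject produces a wall on the slice at all. This computation also disposes of your worry about other $NS$-classes of the same genus: $\Delta\le (n-ch_2)^2-2nd_g^2$ forces $c_1(E)$ to be essentially proportional to $H$, which is why the paper fixes $H=c+nf$. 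Without some version of this estimate your proof does not close.

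The quotient case has the same character. You flag that $\mathcal O(-C')[1]$ (and more generally shifted torsion-free sheaves $F[1]$, cf.\ \cref{lemma3}) might destabilize $I_Z$ from the other side, but offer nothing to exclude it; the paper's argument is again a heart-membership-plus-discriminant estimate: any such quotient has $\mu_H(Q)=d_h/r\le u\le -\sqrt{2(n-1)}$ at a point where its wall could lie outside $\mathcal W_{\mathcal O(-C)}$, forcing $d_h^2\ge 2(n-1)r^2$ and hence $\Delta=(nr+ch_2)^2-2nd_h^2<0$. In short: your identification of the destabilizer and of the obstacles is correct, and your band idea is a workable (if ad hoc) substitute for the paper's treatment of the rank-one competitors, but the exclusion of higher-rank spherical subobjects and of shifted quotients is left as an acknowledged to-do rather than proved, so the argument as written has a genuine gap precisely where the proposition is hardest.
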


\begin{lemma}\label{NoH-C}
Choose $H=c+nf,$ the only Hilbert-Chow totally semistalbe wall of $I_Z$ on $P_H$ is the vertical wall $\{u=0\}.$  
\end{lemma}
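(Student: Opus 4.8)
The plan is to classify the Hilbert-Chow totally semistable walls of $w = v(I_Z) = (1,0,1-n)$ on the slice $P_H$ with $H = c + nf$ using the structure theorem from \cref{clastsw}. By definition, a Hilbert-Chow wall is one where the class decomposes as $w = s + n'w'$ with $s$ spherical, $w'$ primitive isotropic, and $(w,w) = 2n'-2$. Since $(w,w) = 2n-2$ here, we must have $n' = n$, so I expect the decomposition to be forced into a very rigid form. The first step is therefore to work out, on the rank-two hyperbolic lattice $\mathcal H_{\mathcal W}$ attached to such a wall, what isotropic and spherical classes are available, and to pin down the isotropic class $w'$ with $(w, w') = 1$ guaranteed by \cref{BM5.7}.

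\emph{Key steps in order.} First I would parametrize the walls meeting $P_H$. By \cref{ex2.1} and the definition of $Z_{\beta,\omega}$ with $\beta = uH$, $\omega = tH$, a potential wall is determined by the condition that $Z_\sigma(w)$ and $Z_\sigma(u_0)$ become real-proportional for some destabilizing class $u_0$; this yields the standard semicircle/vertical-ray equations in the $(u,t)$-plane. Second, I would impose that the wall be of \emph{Hilbert-Chow} type, i.e. isotropic with the numerical constraint $(w,w) = 2n - 2$ matching the length-$n$ decomposition. The crucial computation is to show that the isotropic class $w'$ realizing $(w, w') = 1$, together with the spherical complement $s = w - n w'$, is only compatible with the slice geometry when the wall is the vertical line $\{u = 0\}$. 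Concretely, I would compute $Z_{\sigma}(w)$ along $P_H$ and check that $\mathrm{Im}(Z_\sigma(w)) = 0$ (the defining feature of a vertical wall, since $w$ has $c_1 = 0$) holds exactly when $u = 0$: because $c_1(w) = 0$, the imaginary part $\mathrm{Im}\,Z_{uH,tH}(w) = \langle \omega, c_1(w)\rangle - \cdots$ degenerates, and only the balance point $u=0$ produces the required real-proportionality with an isotropic class of the right pairing.

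\emph{The anticipated main obstacle} is ruling out \textbf{semicircular} Hilbert-Chow walls, rather than identifying the vertical one. A semicircle on $P_H$ corresponds to a genuine isotropic destabilizer $w'$ with $c_1(w') \neq 0$, and I must show no such $w'$ in $\mathcal H_{\mathcal W}$ can satisfy simultaneously $(w', w') = 0$, $(w, w') = 1$, and $w - n w'$ spherical with the effectivity/positivity forcing it onto the slice. The argument will hinge on the arithmetic of the lattice $NS(X) = \mathbb Z c \oplus \mathbb Z f$ with $c^2 = -2$, $c\cdot f = 1$, $f^2 = 0$, together with $H = c + nf$: I expect that solving the system over this lattice leaves only $w' = (1,0,-n)$-type solutions (up to twist) that collapse to $u = 0$, while any solution with nonzero $H$-slope is incompatible with $(w', w') = 0$ given the sign of $c^2$. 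A clean way to finish is to invoke \cref{BM5.7} in the contrapositive: any semicircular candidate either fails to be totally semistable or is the \emph{spherical} type wall (treated separately in the subsequent analysis of \cref{sec3.1}), so none of them is Hilbert-Chow. The only step requiring care is confirming that the positivity condition in the definition of $C_{\mathcal W}$ excludes spurious lattice solutions, which is a finite check once the hyperbolic lattice is made explicit.
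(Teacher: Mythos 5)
Your overall strategy matches the paper's: reduce via \cref{clastsw} to a decomposition $w=s+nw'$ with $s$ spherical and $w'$ primitive isotropic, write the wall equation on $P_H$, and show that only $c_1(w')=0$ survives, giving the vertical ray $\{u=0\}$. However, the step you yourself flag as the main obstacle --- ruling out semicircular Hilbert--Chow walls --- is precisely the step you do not carry out, and the mechanism you sketch for it is wrong. You assert that ``any solution with nonzero $H$-slope is incompatible with $(w',w')=0$ given the sign of $c^2$,'' but such solutions exist: for instance $w'=(0,f,-1)$ is primitive isotropic with $(w,w')=1$ and $d_h=f\cdot H=1\neq 0$, and $s=w-nw'=(1,-nf,1)$ is spherical. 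The reason this $w'$ (and every other candidate with $c_1(w')$ not proportional to $H$) does not contradict the lemma is not lattice-theoretic nonexistence but the fact that the associated potential wall is a semicircle of negative squared radius, hence disjoint from $P_H$. This is the content of the paper's proof: writing $w'=(r,c_1,p)$ with $d_h=c_1.H$, the wall equation is $\tfrac{d_h}{2}(u^2+t^2)+(r(1-n)-p)u+(n-1)d_h=0$, and its discriminant satisfies $\Delta=[(n-1)r+p]^2-2d_h^2(n-1)\le[(n-1)r-p]^2=1$, using $d_h^2\ge 2rp=c_1^2$ (Hodge index with $H^2=2n-2\ge 2$ when $c_1^2\ge 0$, and trivially otherwise) together with $(n-1)r-p=1$ from $1+r+p=nr$; since $\Delta$ is an integer, $\Delta>0$ forces equality, hence $c_1=kH$, and then integrality of $k^2(n-1)=rp$ and $1+r+p=nr$ forces $k=0$, so $d_h=0$ and the wall degenerates to $u=0$.

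Your other concrete suggestion --- that the vertical wall is detected by $\mathrm{Im}\,Z_{uH,tH}(w)=0$, which indeed holds iff $u=0$ --- is true but does not substitute for the above: a wall is the locus where $Z(w)$ and $Z(w')$ become real-proportional, not where $\mathrm{Im}\,Z(w)$ vanishes, so this only locates the vertical candidate rather than excluding the semicircular ones. Likewise, deferring semicircular candidates to ``the spherical type wall treated separately'' is not available: a decomposition $w=s+nw'$ is Hilbert--Chow type by definition, independently of where its wall sits, so you must genuinely show those semicircles miss $P_H$. In short: correct skeleton and correct identification of where the difficulty lies, but the discriminant/Hodge-index-plus-integrality computation that constitutes the actual proof is absent, and the heuristic offered in its place fails on explicit examples.
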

\begin{proof}
By \cref{clastsw}, we have $(1,0,1-n)=s+nw'$ where $s$ spherical, $w'$ primitive and isotrpic. Write $w'=(r, c_1, p),$ then using the conditions $(s,s)=-2$ and $(w,w)=0$ we get $c_1^2-2rp=0$ and $1+r+p=nr.$ Define $d_h=c_1.H,$ the equation of this wall on $P_H$ is $\frac{d_h}{2}(u^2+t^2)+(r(1-n)-p)u+(n-1)d_h =0.$ Note that the discriminant $$\Delta= [(n-1)r+p]^2-2d_h^2(n-1) \leq [(n-1)r+p]^2-4rp(n-1)= [(n-1)r-p]^2 = (1+r+p-r-p)^2=1,$$ where equality holds if and only if $c_1=kH,$ for some $k\in \mathbb Z.$ Thus, $\Delta >0$ if and only if $c_1=kH,$ for some $k\in \mathbb Z.$ 

We claim that in fact $c_1=0.$ Indeed, given that $c_1=kH,$ the two conditions above become $k^2(n-1)=rp$ and $1+p+r=nr.$ Note that all the variables here are integers, which is impossible unless $k=0.$ Thus, either $w=(0,0,-1)$ or $n=2, w=(1,0,0).$ In any case, the equation of the wall is $u=0.$

\end{proof}

\begin{proof}[Proof of \cref{prop3}]
By \cref{NoH-C}, we can only encounter spherical totally semistable walls. Recall that this kind of walls is caused by a sphercial quotient or subobject $S$ with $(S, I_Z)<0.$

We first consider a wall $\mathcal W_E$ that caused by a spherical subobject $E \hookrightarrow I_Z$ with $(E, I_Z) \leq 0.$ Choose an ample divisor $H$ with $H^2 =1,$ and $G$ orthogonal to $H$ with $G^2 = -1,$ then we can write $ch(E) = (r, d_hH+d_gG, ch_2),$ where $d_h:=c_1(E).H$ and $d_g:=-c_1(E).G.$ We summarize some useful properties of $E$ in the following lemma and prove it below.

\begin{lemma}\label{lemma1}
$E$ is a torsion-free sheaf, so in particular has rank $r\geq 1.$ If $r=1,$ then $E=\mathcal O(-C),$ where $C$ is an effective curve on $X.$ Moreover, $d_h := c_1(E).H <0.$
\end{lemma}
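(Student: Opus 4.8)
The plan is to run the entire analysis on the portion of the path $\gamma_1$ with $u<0$, so that $\mu_\omega(I_Z)=0>\beta\omega=ut$ and hence $I_Z\in\mathcal T\subseteq\mathcal A_{\beta,\omega}$ sits in the heart as a genuine sheaf (with $\mathrm{Im}\,Z_{\beta,\omega}(I_Z)=-ut>0$). On the wall $\mathcal W_E$ the map $E\hookrightarrow I_Z$ is a monomorphism in the abelian category $\mathcal A_{\sigma_0}$, so I would first record that $E$ is a sheaf: the long exact cohomology sequence of $0\to E\to I_Z\to Q\to 0$ begins $0\to\mathcal H^{-1}(E)\to\mathcal H^{-1}(I_Z)=0$, forcing $\mathcal H^{-1}(E)=0$, so $E=\mathcal H^0(E)\in\mathcal T$.

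For torsion-freeness, hence $r\geq 1$, the key observation is that a torsion sheaf cannot be a subobject of $I_Z$ in the heart. Suppose the torsion part $T:=E_{tor}$ were nonzero. Since $E\in\mathcal T$ forces $E/E_{tor}\in\mathcal T$, the sheaf sequence $0\to T\to E\to E/E_{tor}\to 0$ lies in $\mathcal A_{\sigma_0}$, so $T$ is a subobject of $E$, hence of $I_Z$. But any sheaf map $T\to I_Z$ vanishes ($T$ torsion, $I_Z$ torsion-free), so the cohomology sequence of $0\to T\to I_Z\to Q'\to 0$ identifies $T\cong\mathcal H^{-1}(Q')$, contradicting $\mathcal H^{-1}(Q')\in\mathcal F$ being torsion-free. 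Thus $T=0$, and being a nonzero torsion-free subobject, $E$ has rank $r\geq 1$.

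Next I would treat the rank-one statement and the sign of $d_h$ via the sheaf map $\eta:E\to I_Z$ underlying the inclusion, whose kernel is $K:=\mathcal H^{-1}(Q)\in\mathcal F$ and whose image $E/K$ embeds in $I_Z$ as a sheaf. As $E\in\mathcal T$ and $\mathcal T\cap\mathcal F=0$, one has $E/K\neq 0$; being a subsheaf of the rank-one $I_Z$ it has rank one, so $\mathrm{rk}\,K=r-1$. When $r=1$ this gives $K=0$, whence $E\hookrightarrow I_Z\hookrightarrow\mathcal O_X$; sphericity kills the zero-dimensional part of a rank-one torsion-free sheaf (one computes $(E,E)=2\,\mathrm{length}-2$, so the length is $0$), so $E$ is a line bundle $\mathcal O(D)$, and $\mathcal O(D)\hookrightarrow\mathcal O_X$ exhibits $-D=:C$ as effective, i.e. $E\cong\mathcal O(-C)$; here $C\neq 0$ (else $I_Z=\mathcal O_X$ and $n=0$), so $d_h=D\cdot H=-C\cdot H<0$. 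For $r\geq 2$ I would instead combine $c_1(K)\cdot H<u\,(r-1)<0$ (from $K\in\mathcal F$, slope $<u<0$) with $c_1(E/K)\cdot H\leq 0$ (from $\mu_H$-stability of $I_Z$, which has slope $0$) to conclude $d_h=c_1(K)\cdot H+c_1(E/K)\cdot H<0$.

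The step requiring the most care, and the main obstacle, is precisely that for $r\geq 2$ the object $E$ is \emph{not} an honest subsheaf of $I_Z$: only the quotient $E/K$ embeds, and one must track the torsion-free kernel $K\in\mathcal F$ separately. Keeping the slope inequalities for $K\in\mathcal F$ (slope $<u<0$) and for $E\in\mathcal T$ mutually consistent, and ensuring the strict inequality survives in both the $r=1$ and $r\geq 2$ regimes, is the crux; everything else reduces to the cohomology-sequence bookkeeping above together with the normalization $u<0$ and the $\mu_H$-stability of $I_Z$.
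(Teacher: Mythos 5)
Your proposal is correct and follows essentially the same route as the paper: both rest on the long exact sequence of cohomology sheaves of $0\to E\to I_Z\to Q\to 0$, the identification of the image of $E\to I_Z$ as $I_\Gamma(-C)$, the negativity of $\mu_H(\mathcal H^{-1}(Q))$ coming from $\mathcal H^{-1}(Q)\in\mathcal F$, and the computation $(E,E)=2\,\mathrm{length}(\Gamma)-2$ to force $\Gamma=\emptyset$ when $r=1$. The only cosmetic differences are that you prove torsion-freeness by contradiction via the torsion subsheaf being a subobject in the heart (the paper simply notes that $E$ is an extension of the torsion-free sheaves $I_\Gamma(-C)$ and $\mathcal H^{-1}(Q)$), and that you are slightly more explicit than the paper about why the inequality $d_h<0$ is strict in each of the cases $r=1$ and $r\geq 2$.
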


Consider the 2-dimensional slice of stability conditions $P_H := \{ \sigma_{uH, tH} |\ t> 0, u\in \mathbb R\}.$ The equation of the wall $\mathcal W_E$ on the slice is
\begin{equation}\label{eq1}
d_h(u^2+t^2)-2(nr+ch_2)u+2nd_h = 0.
\end{equation}
It defines a semicircle if the discriminant $\Delta = (nr+ ch_2)^2-2n{d_h}^2 >0.$ Note that we have $c_1(E)^2 = {d_h}^2-{d_g}^2 \leq {d_h}^2$ and $-2 = (E, E) = c_1(E)^2 - 2r(ch_2+r),$ therefore, 
\begin{equation}
\begin{split}
\Delta & = 2n(2rch_2 - {d_h}^2) + (nr - ch_2)^2 \\
  & \leq  2n(2rch_2 - {c_1(E)}^2) + (nr-ch_2)^2 \\
  & =  -4n(r^2-1) + ((I_Z, E)+2r)^2
 \end{split}
\end{equation}
For the last equality, we also used $$(I_Z, E) = ((1, 0, 1-n), (r, c_1, ch_2+r))= nr-ch_2 - 2r.$$

Recall that by assumption $(I_Z, E) < 0.$ On the other hand, we have a lower bound of $(I_Z, E):$

\begin{lemma}\label{lemma2}
Suppose that $E$ is a destablizing subobject of $I_Z$ that causes a wall on the 2-dimensional slice $P_H,$ then $(I_Z, E)>-4r,$ where $r=ch_0(E).$
\end{lemma}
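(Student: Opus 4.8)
The plan is to derive the lower bound not from the mere existence of the numerical semicircle $\mathcal{W}_E$, but from the requirement that $E$ actually occur as a sub\emph{object} of $I_Z$ inside the tilted heart along $\mathcal{W}_E$. Recall from \cref{lemma1} that $E$ is a torsion-free sheaf of rank $r\ge 1$ with $\mu_H(E)=d_h/r<0$. First I would record the shape of $\mathcal{W}_E$: completing the square in \eqref{eq1} exhibits it as the semicircle $(u-u_0)^2+t^2=\rho^2$ with center $u_0=(nr+ch_2)/d_h$ and radius given by $\rho^2=u_0^2-2n$. Since $(I_Z,E)<0$ forces $ch_2>(n-2)r\ge 0$, we have $nr+ch_2>0$, and together with $d_h<0$ this places the center at $u_0<0$.

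The decisive constraint comes from heart-membership. For $\mathcal{W}_E$ to be genuinely realized by $E$, there must be a generic $\sigma_0=\sigma_{u_*H,t_*H}\in\mathcal{W}_E$, necessarily with $t_*>0$, at which $E\hookrightarrow I_Z$ in $\mathcal{A}_{u_*H,t_*H}$. Because $E$ is a sheaf, lying in this heart means $E\in\mathcal{T}_{u_*H,t_*H}$, i.e. $u_*\le \mu_H^-(E)\le \mu_H(E)=d_h/r$. Thus the open upper semicircle must meet the half-plane $\{u\le d_h/r\}$, which, since $t_*>0$, is the \emph{strict} inequality
$$ u_0-\rho< \frac{d_h}{r}. $$

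I would then convert this into an inequality among the discrete invariants. Rewrite it as $\rho> u_0-\tfrac{d_h}{r}$ and split on the sign of the right-hand side, which is the sign of $d_h^2-r(nr+ch_2)$. If $d_h^2\le r(nr+ch_2)$ the right-hand side is non-positive and the inequality holds automatically; if instead $d_h^2> r(nr+ch_2)$ both sides are positive, and squaring (using $u_0d_h=nr+ch_2$) reduces the inequality to $d_h^2<2r\,ch_2$. But this last possibility is excluded by the Hodge index theorem together with sphericity: $d_h^2\ge c_1(E)^2=2r\,ch_2+2r^2-2\ge 2r\,ch_2$. Hence only the first alternative can occur, giving
$$ d_h^2\le r(nr+ch_2). $$
Combining this with $d_h^2\ge 2r\,ch_2+2r^2-2$ yields $ch_2\le (n-2)r+2/r$, and substituting into $(I_Z,E)=nr-ch_2-2r$ gives $(I_Z,E)\ge -2/r>-4r$, proving the lemma (indeed with room to spare).

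The step I expect to be the main obstacle is justifying the heart-membership constraint together with its strictness. One must check that $E$ really appears as a Bridgeland subobject of $I_Z$ at a point of $\mathcal{W}_E$ with $t_*>0$ --- equivalently that the sheaf inclusion $E\hookrightarrow I_Z$ remains exact in $\mathcal{A}_{u_*H,t_*H}$ --- because it is precisely the strictness $t_*>0$ that discards the degenerate would-be destabilizers with $c_1(E)$ a large multiple of $H$ (for which the semicircle only touches the line $u=d_h/r$ at $t=0$). The possible failure of slope-semistability of $E$ is harmless here, since $\mu_H^-(E)\le\mu_H(E)$ only makes the necessary condition $u_*\le d_h/r$ easier to meet.
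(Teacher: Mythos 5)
Your proof is correct, and it rests on the same two pillars as the paper's own argument: the requirement that the semicircle $\mathcal W_E$ meet the region $\{u\le d_h/r\}$ where the sheaf $E$ can lie in the tilted heart, and the Hodge-index/sphericity estimate $d_h^2\ge c_1(E)^2=2r\,ch_2+2(r^2-1)$. The execution differs, though. The paper argues by contradiction: assuming $(I_Z,E)\le -4r$ it writes $ch_2=(n+2+p)r$, deduces $d_h\le -r\sqrt{2(n+2+p)}$ and $u\le-\sqrt{2(n+2+p)}$, and then argues the semicircle cannot reach that far left; its check that $(-\sqrt{2(n+2+p)},0)$ lies on the wall is literally valid only in the extremal case $d_h=ru$, whereas your sign-split on $u_0-d_h/r$ followed by squaring treats all cases uniformly and is cleaner on exactly this point. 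More significantly, your direct computation gives $d_h^2\le r(nr+ch_2)$ and hence $(I_Z,E)\ge -2/r$, which is strictly stronger than the stated bound $(I_Z,E)>-4r$: since $(I_Z,E)\in\mathbb Z$, it rules out $(I_Z,E)<0$ for any destabilizer of rank $r\ge 3$ outright, and for $r=1$ it eliminates the branch $(E,I_Z)=-4r+1$ (the genus $n+2$ curve) that the paper still has to carry through the discriminant analysis in the proof of \cref{prop3}. Finally, the hypothesis you flag as the delicate step --- that $E$ must lie in $\mathcal A_{u_*H,t_*H}$ at a point of the wall with $t_*>0$ --- is precisely the condition the paper itself invokes (via $\operatorname{Im}Z(E)=t(d_h-ru)\ge 0$ on the wall), so it carries no extra burden, and the strictness of $u_0-\rho<d_h/r$ does come for free from $t>0$ on the slice, as you say.
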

We will prove this lemma below. Given the lower bound, we have $$\Delta \leq -4n(r^2-1)+(2r-1)^2=-4(n-1)r^2 -4r + (4n+1).$$ 


If $r\geq 2,$ then $\Delta \leq -4(n-1)r^2 -4r +(4n+1) < 0$ and $E$ does not define a wall on the 2-dimensional slice $P_H.$

Otherwise $r=1.$ Then the only possibilities to have $\Delta>0$ are $(E, I_Z) = -1 $ or $ -4r+1$. By \cref{lemma1}, $E= \mathcal O(-C).$ Recall that the condition $(E, I_Z) = -1$ or $-4r+1$ is equivalent to $nr-ch_2 = 1$ or $-2r+1$, and consequently $n- ch_2 = n - \frac{1}{2}C^2 = n - (g-1) = \pm 1,$ where $g$ is the genus of $C.$ This shows that the first totally semistable wall is caused by $\mathcal O(-C).$ 

From now on we fix our polarization $H=c+nf.$ Then the first totally semistable wall along a ray $\gamma_1:=\{\sigma_{uH, tH} | t:\infty \to 0\}$ is caused by $\mathcal O(-C) \hookrightarrow I_Z$ with $C = c+nf,$ as long as $u$ is suitable such that $\gamma_1$ cross it. Let $\mathcal W_{\mathcal O(-C)}$ denotes this wall.

Next, we need to consider wall $\mathcal W_Q$ that are caused by a spherical stable quotient object $I_Z \twoheadrightarrow Q$ with $(I_Z, Q)\leq 0.$ By abuse of notation, we set $ch(Q):=(r, d_hH+d_gG, ch_2).$ The following lemma states some useful properties of $Q$:

\begin{lemma}\label{lemma3}
Suppose $Q$ as above is a spherical stable quotient of $I_Z$ with $(I_Z, Q)\leq 0.$ Then $Q \cong F[1]$ is the shifting of a torsion-free sheaf $F$, $d_h > 0$ and $r<0.$
\end{lemma}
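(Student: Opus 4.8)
The plan is to read off the structure of $Q$ from the cohomology sheaves of the defining sequence, and then pin down the two sign statements from the heart condition on $P_H$. Start with the short exact sequence
$$0 \to K \to I_Z \to Q \to 0$$
in the tilted heart $\mathcal A_{uH,tH}$ (for $\sigma_0\in \mathcal W_Q$) realizing $Q$ as the destabilizing quotient. Taking the long exact sequence of cohomology sheaves and using that $I_Z$ is a sheaf ($\mathcal H^{-1}(I_Z)=0$, $\mathcal H^0(I_Z)=I_Z$) forces $\mathcal H^{-1}(K)=0$, so $K$ is a genuine sheaf in $\mathcal T$, and leaves the four-term sequence $0\to \mathcal H^{-1}(Q)\to K\to I_Z\to \mathcal H^0(Q)\to 0$. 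Write $F:=\mathcal H^{-1}(Q)\in\mathcal F$ (torsion-free) and $I:=\operatorname{im}(K\to I_Z)\subseteq I_Z$. Since $K\neq 0$ (otherwise $Q\cong I_Z$, which is not spherical as $(I_Z,I_Z)=2n-2\geq 2$) and $\mathcal T\cap\mathcal F=0$, the sheaf map $K\to I_Z$ is nonzero, so $I$ is a nonzero torsion-free subsheaf of the rank one $I_Z$; hence $\operatorname{rk} I=1$ and $\mathcal H^0(Q)=I_Z/I$ is a \emph{torsion} quotient.

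The crux is to show $\mathcal H^0(Q)=0$. I would apply Mukai's Lemma (\cite{Muk}) to the canonical sequence $0\to F[1]\to Q\to \mathcal H^0(Q)\to 0$ in $\mathcal A_{uH,tH}$. The hypothesis $\operatorname{Hom}(F[1],\mathcal H^0(Q))=\operatorname{Ext}^{-1}(F,\mathcal H^0(Q))=0$ holds automatically, so Mukai's inequality together with sphericality $\operatorname{ext}^1(Q,Q)=0$ gives $\operatorname{ext}^1(F,F)=0$ and $\operatorname{ext}^1(\mathcal H^0(Q),\mathcal H^0(Q))=0$. A nonzero rigid torsion sheaf on a K3 is necessarily spherical (a short Riemann–Roch estimate forces $(v,v)=-2$), so if $\mathcal H^0(Q)\neq 0$ it is a torsion spherical sheaf supported on a $(-2)$-class, and stability of $Q$ gives $\phi(F[1])<\phi(Q)<\phi(\mathcal H^0(Q))$. \textbf{This is the step I expect to be the main obstacle}: pure stability does not exclude such a configuration, since $\mathcal H^0(Q)$ is also an $\mathcal A$-quotient of the semistable $I_Z$ and the resulting phase inequalities are consistent. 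I would rule it out using the minimality built into \cref{prop3}: a torsion spherical quotient of $I_Z$ with $(I_Z,\mathcal H^0(Q))<0$ would itself determine a totally semistable wall of $I_Z$, and with the explicit polarization $H=c+nf$ one checks (in the spirit of \cref{NoH-C} and the discriminant estimates of \cref{lemma1}) that this wall has strictly larger radius on $P_H$, contradicting that $\mathcal W_Q$ is the \emph{first} wall met along $\gamma_1$. Nailing this radius comparison is the delicate point.

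Granting $\mathcal H^0(Q)=0$, we get $Q\cong F[1]$ with $F\in\mathcal F$ nonzero and torsion-free of rank $\geq 1$, so $r=ch_0(Q)=-\operatorname{rk}(F)<0$. For the last assertion, recall that $\mathcal H_{\mathcal W_Q}$-membership gives $\phi(Q)=\phi(I_Z)$, and since $u<0$ on this semicircular (non-vertical, by \cref{NoH-C}) wall we have $\mathrm{Im}\,Z(I_Z)=-tu>0$, hence $\phi(Q)=\phi(I_Z)<1$ and $\mathrm{Im}\,Z_{uH,tH}(Q)=t\,(d_h-ru)>0$. With $t>0$ this reads $d_h>ru$, and since $r<0$ and $u<0$ give $ru>0$, we conclude $d_h=c_1(Q).H>0$, completing the proof.
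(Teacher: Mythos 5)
Your setup (the long exact sequence of cohomology sheaves, the identification of $\mathcal H^0(Q)$ as a torsion quotient of $I_Z$, and the final deduction of $r<0$ and $d_h>0$ once $Q\cong F[1]$ is granted) is sound and consistent with the paper. The problem is the step you yourself flag: you never actually prove $\mathcal H^0(Q)=0$, and the route you sketch for it does not work. Mukai's lemma applied to $0\to \mathcal H^{-1}(Q)[1]\to Q\to \mathcal H^0(Q)\to 0$ only tells you that $\mathcal H^0(Q)$ is rigid (and note that a nonzero rigid torsion sheaf on a K3 need not be spherical, since it need not be simple --- e.g.\ $\mathcal O_D^{\oplus 2}$ for a $(-2)$-curve $D$). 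You then propose to exclude the configuration by a wall-radius comparison hinging on $(I_Z,\mathcal H^0(Q))<0$; but there is no reason for that pairing to be negative --- for the natural candidates supported on a $(-2)$-curve one computes it to be positive --- and appealing to the minimality of the wall along $\gamma_1$ inside this lemma is logically awkward, since the lemma is an ingredient in the proof of \cref{prop3}, which is where that minimality is being established.

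The paper's proof avoids all of this by exploiting the $\sigma_0$-stability of $Q$ more forcefully. If $\mathcal H^0(Q)\neq 0$, one extracts from it a $\sigma_0$-stable quotient $Q_2$ of $Q$; stability of $Q$ forces the surjection $Q\twoheadrightarrow Q_2$ to be an isomorphism, so $Q\cong \mathcal H^0(Q)$ is a pure torsion sheaf sitting in $0\to I_\Gamma(-D)\to I_Z\to Q\to 0$ in $Coh(X)$. Sphericality then forces $D$ to be a $(-2)$-curve and $\Gamma=\emptyset$, and a direct Mukai-pairing computation gives $(I_Z,Q)>0$, contradicting the standing hypothesis $(I_Z,Q)\leq 0$. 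In other words, the contradiction comes straight from the numerical hypothesis of the lemma, not from any comparison of walls; the mixed case $\mathcal H^{-1}(Q)\neq 0\neq\mathcal H^0(Q)$ that stalls your argument simply cannot occur for a \emph{stable} $Q$. As written, your proposal has a genuine gap at its central step.
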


 We claim $\mathcal W_Q$ does not prevent $\mathcal W_{\mathcal O(-C)}$ from being the first totally semistable wall. To prove this, note that the equation of $\mathcal W_{\mathcal O(-C)}$ is
 $$(u + \sqrt{2(n-1)}+ \dfrac{1}{\sqrt{2(n-1)}})^2+t^2 = \dfrac{1}{2(n-1)}.$$
 In particular, the right end point of $\mathcal W_{\mathcal O(-C)}$ (which is a semicircle) is $u= -\sqrt{2(n-1)}.$  

On the other hand, for $Q$ to be in the heart $\mathcal A_{uH, tH},$ we should have $\mu_H(Q) = \dfrac{d_h}{r} \leq u.$ 

Now choose a vertical ray $\gamma_u:=\{\sigma_{uH, tH}|t:\infty \to 0\}$ that crosses $\mathcal W_{\mathcal O(-C)}.$ If there is a wall $\mathcal W_Q$ that prevent $\mathcal W_{\mathcal O(-C)}$ from being the first totally semistable wall along $\gamma_u$, then we have $\dfrac{d_h}{r} \leq u \leq -\sqrt{2(n-1)} $. However, this would implies $d_h^2 \geq 2(n-1)r^2,$ and consequently the discriminant of the equation of $\mathcal W_Q$ would be $$\Delta = (nr+ch_2)^2-2nd_h^2 \leq 4r^2(n-1)^2 - 4r^2n(n-1) <0.$$

Therefore, $\mathcal W_{\mathcal O(-C)}$ is the first totally semistable wall along some path $\gamma_1:=\{\sigma_{uH,tH} | t:\infty \to 0\}.$
\end{proof}

\begin{proof}[Proof of \cref{lemma1}]
Let $ 0\rightarrow E \rightarrow I_Z \rightarrow Q \rightarrow 0$ be the short exact sequence in the heart, then we have an exact sequence of coherent sheaves: $$0\rightarrow H^{-1}(E) \rightarrow 0\rightarrow H^{-1}(Q) \rightarrow E \rightarrow I_Z \rightarrow H^0(Q)\rightarrow 0.$$ 
Hence, $E$ is a sheaf. Note that the image of $E\rightarrow I_Z$ must be of the form $I_\Gamma(-C),$ where $\Gamma$ is a 0-dimensional subscheme and $C$ an effective curve on $X$. 

On the other hand, $H^{-1}(Q)\in \mathcal F$ is torsion-free and $\mu_H(H^{-1}(Q))< \mu_H(I_Z) = 0,$ that is $c_1(H^{-1}(Q)).H < 0.$ Thus, being an extension of $I_\Gamma(-C)$ by $H^{-1}(Q),$ $E$ is torsion-free with $d_H:= c_1(E).H <0.$

If we assume in addition that $r=1,$ then $E=I_\Gamma(-C),$ but then $(E, E)=-2$ would imply $\Gamma = \emptyset$ and $E = \mathcal O(-C).$ 
\end{proof}

\begin{proof}[Proof of \cref{lemma2}] We want to show $(E, I_Z) > -4r$, or equivalently $nr-ch_2 > -2r$. Assume for contradiction that we have $nr-ch_2 \leq -2r,$ then $ch_2 \geq (n+2)r,$ so let's write $ch_2 = (n+2+p)r,$ for some rational number $p\geq 0.$ 
Notice that $${d_h}^2 \geq c_1^2 = 2rch_2 + 2(r^2-1) \geq 2(n+2+p)r^2.$$ Therefore, $d_h \leq - r\sqrt{2(n+2+p)},$ since $d_h$ is negative by \cref{lemma1}.

Now we consider the imaginary part of the central charge of $E$ at $\sigma_{uH, tH}:$  Im$ Z(E)= t(d_h-ru) \geq 0,$ as we assume $E$ is in the heart. This implies that $$u\leq \dfrac{d_h}{r} \leq -\sqrt{2(n+2+p)}.$$ On the other hand, the point ($u= -\sqrt{2(n+2+p)}, t= 0$) satisfies the equation \eqref{eq1} of the potential wall caused by $E$ (to check this, note that if $u=-\sqrt{2(n+2+p)},$ then $d_h = ru$), and the center of the semicircle defined by \cref{eq1} is $$\dfrac{rn+ch_2}{d_h} \geq -\dfrac{r(2n+2+p)}{r\sqrt{2(n+2+p)}} > - \sqrt{2(n+2+p)},$$ we see that on the wall $\mathcal W_E$, $E$ is not in the heart. This is a contradiction.
\end{proof}

\begin{proof}[Proof of \cref{lemma3}]
We consider the following distinguished triangle $$H^{-1}(Q)[1]\rightarrow Q \rightarrow H^0(Q).$$ Suppose that $H^0(Q) \neq 0,$ then we have $Q \rightarrow H^0(Q) \rightarrow Q_1 \rightarrow Q_2,$ where $Q_1$ is the most destablizing quotient of $H^0(Q)$ and $Q_2$ is a stable factor of $Q_1.$ Thus, the composition $Q \rightarrow Q_2$ is surjective and $Q_2$ is itself spherical by \cref{mukaicor}. Therefore $Q \cong H^0(Q) \cong Q_2.$ This means we have a short exact sequence $0\rightarrow E=I_\Gamma(-D) \rightarrow I_Z \rightarrow Q= \mathcal O_{\Gamma \cup D}(-Z) \rightarrow 0$ in $Coh (X).$ For $Q$ to be spherical, $D$ must be a $-2$-curve. For $Ext^1(Q, Q)= 0,$ we see  that $\Gamma = \emptyset.$ But then $(I_Z, Q)=1,$ contradict to our assumption.

Thus, $Q\cong F[1]$ for some torsion-free sheaf with $\mu_H(F)<u<0,$ and therefore, $d_h>0.$
\end{proof}

\begin{rmk}\label{rmk2} 
In the case when $n=2,$ we choose $H=\dfrac{c+(2 + \epsilon)f}{\sqrt{2+2\epsilon}}$ for some $0< \epsilon <<1,$ so that $H$ is ample and $\mathcal O(c+2f)$ still cause the first totally semistable wall. 

In any case, our choice of $H$ is suitable to $w = (1, 0, 1-n)$ in sense of O'Grady \cite{O'G1}.
\end{rmk}

\begin{cor}\label{cor1}
$\mathcal W_1$ is the first totally semistable wall of $E_1=I_Z(c+nf)$ along some vertical path on the slice $P_H.$
\end{cor}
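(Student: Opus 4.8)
The plan is to deduce \cref{cor1} directly from \cref{prop3} by transporting the entire wall-and-chamber picture of $I_Z$ along the autoequivalence given by tensoring with the line bundle $\mathcal O(H)$, where $H=c+nf$. The point is that $E_1 = I_Z(c+nf) = I_Z\otimes \mathcal O(H)$, so $E_1 = T_H(I_Z)$ for the autoequivalence $T_H := (-)\otimes \mathcal O(H)$ of $D^b(X)$, which acts on $Stab^\dagger(X)$ in the usual way. Everything established for $I_Z$ in \cref{prop3} should then be obtained for $E_1$ by applying $T_H$.

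First I would record how $T_H$ acts on the slice $P_H$. Twisting by $\mathcal O(D)$ multiplies Mukai vectors by $e^{D}$, which on the level of stability conditions shifts the $\beta$-parameter, sending $\sigma_{\beta,\omega}$ to $\sigma_{\beta+D,\omega}$. Taking $D=H$ and restricting to $P_H = \{\sigma_{uH,tH}\}$, this is the rigid horizontal translation $\sigma_{uH,tH}\mapsto \sigma_{(u+1)H,tH}$ (up to the sign convention for the twist), which in particular preserves the slice $P_H$ and carries vertical rays to vertical rays. Because $T_H$ is an equivalence, it matches $\sigma$-(semi)stable objects of $I_Z$ with $T_H(\sigma)$-(semi)stable objects of $E_1$, and therefore identifies the locally finite wall system of $w=v(I_Z)$ with that of $v_1=v(E_1)$, sending totally semistable walls to totally semistable walls and each destabilizing sub- or quotient object $S$ to $T_H(S)$.

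Applying this to the conclusion of \cref{prop3}, the first totally semistable wall $\mathcal W_{\mathcal O(-C)}$ of $I_Z$ along the vertical ray $\gamma_1$, which is caused by $\mathcal O(-C)\hookrightarrow I_Z$ with $C = c+nf$, is carried to a totally semistable wall $\mathcal W_1 := T_H(\mathcal W_{\mathcal O(-C)})$ of $E_1$ along the translated vertical ray $\gamma := T_H(\gamma_1)$ on $P_H$. Since the translation is rigid, the ordering of the walls along the ray is preserved, so $\mathcal W_1$ is the \emph{first} totally semistable wall of $E_1$ along $\gamma$, as claimed. Moreover the destabilizing subobject becomes $T_H(\mathcal O(-C)) = \mathcal O(-C)\otimes \mathcal O(H) = \mathcal O(H-C) = \mathcal O_X$, since $C = c+nf = H$; this is exactly the spherical object $\mathcal S_0$ appearing in O'Grady's extension $(\star)$, which is the consistency check one wants.

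The main (and fairly modest) obstacle is to justify the displayed action of $T_H$ on $P_H$, namely that $-\otimes\mathcal O(H)$ really induces the rigid horizontal translation $u\mapsto u+1$ and preserves $Stab^\dagger(X)$ together with the Gieseker chamber; this rests on the standard formula for the action of line-bundle twists on $Z_{\beta,\omega}$ and on the fact that autoequivalences send walls to walls. Finally, for $n=2$ the class $H=c+2f$ is only nef, so I would replace it by the ample perturbation of \cref{rmk2}; the identical translation argument then applies on the corresponding slice, and the conclusion is unchanged.
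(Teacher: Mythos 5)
Your argument is correct and is essentially the paper's own proof: the paper likewise deduces \cref{cor1} from \cref{prop3} by observing that twisting by $\mathcal O(c+nf)$ acts on the slice as a rigid horizontal translation, $(\otimes\, \mathcal O(H)).\sigma_{uH,tH}=\sigma_{(u-\sqrt{2n-2})H,tH}$ in the paper's normalization $H=(c+nf)/\sqrt{2n-2}$, thereby carrying the wall system of $I_Z$ (and in particular its first totally semistable wall, caused by $\mathcal O(-C)$) to that of $E_1$, with $\mathcal O(-C)$ going to $\mathcal O_X$. The only difference is your unnormalized choice of $H$, which changes the translation length from $\sqrt{2n-2}$ to $1$ but affects nothing else.
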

\begin{proof}
Choose $H=\dfrac{c+nf}{\sqrt{2n-2}}$ as above, $\mathcal W_{\mathcal O(-H)}$ causes the first totally semistable wall of $I_Z.$ Twisting by $\mathcal O(H)$ induces an action on $Stab(X)$: $(\otimes \mathcal O(H)).\sigma_{uH, tH} = \sigma_{(u-\sqrt{2n-2})H, tH}.$   
\end{proof}

\subsection{The first totally semistable wall of $v_1$.}\label{sec3.3}
 Denote $P_m$ the 2-dimensional slice $\{\sigma_{uH, tH}:t>0\},$ where $H:=\frac{c+mf}{\sqrt{2m-2}}$. While \cref{cor1} states that when $m=n,$ the first totally semistable wall of $E_1 = I_Z(c+nf)$ on $P_n$ is caused by $\mathcal O$, the other walls $\mathcal W_r$ may not appear on $P_n.$ Indeed, \cref{lemma4} below shows that we need to let $m$ be sufficiently large, in order to have the walls $\mathcal W_r,$ $1\leq r \leq R,$ all appear on $P_m$. 
 
 \begin{lemma}\label{lemma4}
Given $R\geq 1,$ if we choose the polarization $H=\frac{\mathcal O(c+mf)}{\sqrt{2m-2}}$ with $m$ sufficiently large, then the 2-dimensional slice $P_H$ intersects $\mathcal W_r,$ for $1\leq r \leq R$. Moreover, on this slice, the walls $\{\mathcal W_r: 1\leq r\leq R\}$ are nested semicircles: $\mathcal W_{r-1}$ is inside $\mathcal W_r$ on $P_H.$
\end{lemma}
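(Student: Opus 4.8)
The plan is to make each wall $\mathcal{W}_r$ completely explicit on the slice $P_H$ and then reduce both assertions---appearance as a semicircle, and nesting---to elementary inequalities in $m$.

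First I would record the Mukai vectors. Unwinding the recursion $E_r = ST^{-1}_{\mathcal S_{r-1}}(E_{r-1})$ on cohomology, where a spherical twist by $s$ acts as the reflection $v\mapsto v+(v,s)s$, and starting from $v_1 = v(I_Z(c+nf)) = (1,\,c+nf,\,0)$ with $s_r = v(\mathcal O(2rf)) = (1,\,2rf,\,1)$, induction gives $(v_{r-1},s_{r-1})=1$, hence $v_r = v_{r-1}+s_{r-1}$ and
\[
v_r = \big(r,\; c + (n+r(r-1))f,\; r-1\big),\qquad s_{r-1}=(1,\,2(r-1)f,\,1),\qquad (v_r,s_{r-1})=-1.
\]
By \cref{BM5.7}, $s_{r-1}$ is an effective spherical class pairing negatively with $v_r$, so $\mathcal W_r$ is exactly the potential wall attached to the rank-two lattice $\langle v_r,s_{r-1}\rangle$, i.e.\ the locus in $P_H$ where $Z_{uH,tH}(v_r)$ and $Z_{uH,tH}(s_{r-1})$ become positively proportional.

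Next I would write this locus out. Using the central charge of \cref{ex2.1} with $H=(c+mf)/\sqrt{2m-2}$ (so $H^2=1$), a direct cross-multiplication of real and imaginary parts shows the phase-matching locus of two classes is a circle; specializing to $v_r,s_{r-1}$ and writing $M:=m+n-2$, $A_r:=r(r-1)-M$, $B_r:=M-(r-1)(r-2)$, the wall $\mathcal W_r$ is
\[
\frac{A_r}{2\sqrt{2m-2}}\,(u^2+t^2)\;-\;u\;+\;\frac{B_r}{\sqrt{2m-2}}\;=\;0,
\]
a semicircle with center $u_0^{(r)}=\sqrt{2m-2}/A_r$ on the negative $u$-axis and radius $\rho_r$ given by $\rho_r^2=(2m-2-2A_rB_r)/A_r^2$. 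For each fixed $r$ the numerator equals $2M^2+O(M)>0$ once $m$ is large, and since $1\le r\le R$ is a finite range a single large $m$ forces $\rho_r^2>0$ for all of them; this is the first assertion. (As a sanity check, at $m=n$ this recovers the wall of $E_1$ from \cref{cor1}.)

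For the nesting I would compare the circles directly. Two circles centered on the $u$-axis satisfy $\mathcal W_{r-1}\subset\mathcal W_r$ precisely when $\rho_r-\rho_{r-1}>|u_0^{(r)}-u_0^{(r-1)}|$. Expanding in $1/M$ yields
\[
\rho_r^2 = 2+\frac{2(2r-1)}{M}+O(M^{-2}),\qquad u_0^{(r)}=-\sqrt{\tfrac{2}{M}}\,\big(1+O(M^{-1})\big),
\]
so $\rho_r-\rho_{r-1}=\sqrt2\,M^{-1}+O(M^{-2})$ while $u_0^{(r)}-u_0^{(r-1)}=O(M^{-3/2})$. Since $M^{-1}$ dominates $M^{-3/2}$, the radius gap exceeds the displacement of centers for $m$ large, and as $\rho_r>\rho_{r-1}$ the larger circle $\mathcal W_r$ encloses $\mathcal W_{r-1}$, uniformly over $1\le r\le R$.

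The delicate point, and the main obstacle, is that this nesting is invisible at leading order: as $m\to\infty$ every $\mathcal W_r$ converges to the \emph{same} semicircle (center $\to0$, radius $\to\sqrt2$), so the containment is entirely a subleading phenomenon. One must therefore retain the $O(1/M)$ term of the radius against the $O(M^{-3/2})$ term of the center and check that the former wins; equivalently, one can bypass asymptotics and verify the exact inequality $\rho_r-\rho_{r-1}>|u_0^{(r)}-u_0^{(r-1)}|$ straight from the formulas for $A_r,B_r$, which is elementary but requires care with the square roots. This is the step I expect to absorb most of the work.
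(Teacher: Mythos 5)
Your proof is correct and follows the same basic strategy as the paper's: write down the explicit equation of each $\mathcal W_r$ as a circle on the slice $P_H$ and deduce both claims by elementary estimates for $m$ large. The two arguments diverge at the nesting step. The paper asserts that the circles $\mathcal W_{r-1}$ and $\mathcal W_r$ are disjoint and then compares their intercepts $t_{r-1}<t_r$ with the vertical axis $\{u=0\}$, concluding containment; you instead use the center--radius criterion $\rho_r-\rho_{r-1}>|u_0^{(r)}-u_0^{(r-1)}|$ and an expansion in $1/M$. Both are valid, and your route has the advantage of making quantitative the disjointness that the paper only asserts ("from the equations we can see"); the paper's route avoids the square-root bookkeeping you flag as the delicate point. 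One substantive discrepancy worth flagging: your constant term $2B_r=2(m+n-4+3r-r^2)$ does not agree with the paper's $4(r-1)$ in its equation for $\mathcal W_r$. I checked a test case ($r=1$, $m=n$, $n=2$): the alignment locus of $Z(v_1)$ and $Z(s_0)$ under the paper's own definition $Z=(\exp(\beta+i\omega),v(E))$ (Mukai vector, \cref{ex2.1}) is the circle you write down, with radius squared $2+\tfrac{1}{2m-2}$, not the paper's $\tfrac{1}{2m-2}$; the paper's numbers correspond to pairing $\exp(\beta+i\omega)$ with $ch(E)$ rather than $v(E)$, i.e.\ the two circles share a center but differ by $2$ in radius squared. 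This means your "sanity check" against \cref{cor1} matches the corrected version of that statement rather than the printed one, and your $t$-intercepts tend to $\sqrt2$ while the paper's tend to $0$. None of this affects the lemma: under either normalization the walls are semicircles with negative centers that are nested for $m\gg0$, and your asymptotic comparison $\rho_r-\rho_{r-1}\sim\sqrt2\,M^{-1}$ versus $|u_0^{(r)}-u_0^{(r-1)}|=O(M^{-3/2})$ is correct as computed.
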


\begin{proof}
The wall $\mathcal W_r$ of $v_r$ caused by $s_{r-1}$ is described as: 
\begin{equation}\label{eq7}
(r^2-r+2-n-m)(u^2+t^2)-2\sqrt{2m-2} u + 4(r-1)=0.
\end{equation}
Our assumption guarantees that this semicircle has positive radius and its center is on the $\{u\leq 0\}$ half.

From the equations we can see that $\mathcal W_{r-1}$ and $\mathcal W_r$ have no intersection. On the other hand, they both intersect $t$-axis, at $t_{r-1}=2\sqrt{\frac{r-2}{n+m+3r-4-r^2}}$ and $t_r=2\sqrt{\frac{r-1}{n+m+r-2-r^2}}$ respectively. Note that $t_{r-1} < t_r$ Thus, $\mathcal W_{r-1}$ is contained in $\mathcal W_r.$
\end{proof}

Thus, we deform $P_m$ by varying $m$ from $n$ to $\infty$, to show that $\mathcal W_1$ is always the first totally semistable wall for $v_1$ on $P_m,$ for $m\geq n.$

\begin{prop}\label{lemma6}
For all $m\geq n,$ $\mathcal W_1$ is the first totally semistable wall of $v_1$ on $P_{m},$ along a vertical ray starting from the Gieseker chamber of $v_1$. 
\end{prop}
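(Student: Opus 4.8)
\emph{Strategy.} The plan is to carry out the deformation indicated above: the case $m=n$ is \cref{cor1}, and I would show that $\mathcal W_1$ remains the first totally semistable wall of $v_1=(1,c+nf,0)$ on $P_m$ for every $m\ge n$ by a direct wall comparison on the slice, normalizing $H=\tfrac{c+mf}{\sqrt{2m-2}}$ (so $H^2=1$) and fixing $G\in NS(X)_{\mathbb R}$ with $G\cdot H=0$, $G^2=-1$ as in \cref{lemma1}. First I would reduce to spherical walls: by \cref{clastsw} every totally semistable wall of $v_1$ is spherical or Hilbert--Chow, and the argument of \cref{NoH-C} shows the Hilbert--Chow wall is the single vertical ray $\{u=\mu_H(v_1)\}$; choosing the descending ray $\gamma$ at any $u_\ast\ne\mu_H(v_1)$ removes it from consideration. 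Since $v_1$ is primitive, its totally semistable walls do not cross on $P_m$, so they are nested semicircles centered on the $u$-axis; as the inner disk of a nested pair lies inside the outer one, along $\gamma$ the outermost wall is always met first. Hence it suffices to bound the \emph{reach} (maximal $t$-value) of every competing spherical wall by that of $\mathcal W_1$ (equality being impossible for distinct non-crossing walls), so that $\mathcal W_1$ is the outermost.

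Concretely, writing a spherical destabilizer of $v_1$ as a Mukai vector $S=(r,D_S,s)$ with $d_h:=H\cdot D_S$ and $d_1:=H\cdot(c+nf)=\tfrac{m+n-2}{\sqrt{2m-2}}>0$, the wall it cuts on $P_m$ is the semicircle
\[
(d_h-rd_1)(u^2+t^2)-2su+2sd_1=0,
\]
of squared reach $\dfrac{s\bigl(s-2d_1d_h+2rd_1^2\bigr)}{(d_h-rd_1)^2}$, while $\mathcal W_1$, cut by $\mathcal O$ with $v(\mathcal O)=(1,0,1)$, has squared reach $\tfrac{1}{d_1^2}+2$. Thus the whole statement reduces to the single inequality
\[
\frac{s\bigl(s-2d_1d_h+2rd_1^2\bigr)}{(d_h-rd_1)^2}\ \le\ \frac{1}{d_1^2}+2
\]
for all admissible $S$ and all $m\ge n$. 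To make the admissible $S$ tractable I would re-establish, for the present class and polarization, the structural restrictions of \cref{lemma1,lemma2,lemma3}: a destabilizing spherical subobject is a torsion-free sheaf, a rank-one such subobject is a line bundle $\mathcal O(-C')$, and a destabilizing spherical quotient is the shift of a torsion-free sheaf; together with sphericity $d_h^2-d_g^2-2rs=-2$ (where $d_g:=-G\cdot D_S$) and the condition $(S,v_1)=D_S\cdot(c+nf)-s<0$ bounded below in the style of \cref{lemma2}, these pin $S$ down enough to feed into the displayed inequality. Equivalently, one may first apply the autoequivalence $-\otimes\mathcal O(-(c+mf))$, which preserves $P_m$ and carries $v_1$ to $v(\mathcal I_Z(-(m-n)f))$ and $\mathcal O$ to $\mathcal O(-(c+mf))$, presenting \cref{lemma6} as the exact analogue of \cref{prop3} with $\mathcal I_Z$ replaced by its fiber twist and genus $n$ by genus $m$ (the case $m=n$ being \cref{prop3} itself).

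The step I expect to be the main obstacle is the uniformity in $m$. As $m$ grows the slice $P_m$ tilts away from $c_1(v_1)=c+nf$, so the off-$H$ component $d_g$ of a potential destabilizer is no longer constrained as it is when $m=n$, and both $d_g$ and the rank $r$ could a priori grow with $m$ and enlarge the reach of the competing wall. Controlling this amounts to showing that $d_g$ and $r$ cannot conspire to push a spherical wall outside $\mathcal W_1$ for any $m\ge n$; I expect it to follow by combining the sphericity relation with the heart condition $\operatorname{Im}Z_{uH,tH}(S)\ge 0$, which bounds the slope $\mu_H(S)=d_h/r$ exactly as in the proofs of \cref{lemma2,lemma3}, but the bookkeeping that keeps the estimate uniform as $m\to\infty$ is the delicate part.
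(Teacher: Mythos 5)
Your strategy is genuinely different from the paper's, and as written it contains a real gap rather than just missing bookkeeping: everything in \cref{lemma6} beyond the base case $m=n$ (which is \cref{cor1}) is packed into your displayed radius inequality, and you neither prove that inequality nor identify a mechanism that would make it hold uniformly in $m$ --- you explicitly flag the uniformity as the step you ``expect'' to work. The reduction itself also rests on two claims that do not transfer from the $m=n$ analysis: that the Hilbert--Chow wall of $v_1$ on $P_m$ is still a single vertical ray when $m>n$ (the discriminant argument in \cref{NoH-C} uses that $c_1$ of the class is proportional to $H$, which fails once $m>n$), and that the analogues of \cref{lemma1}, \cref{lemma2} and \cref{lemma3} hold for $v_1$ on $P_m$ (the proof of \cref{lemma2}, which supplies the crucial lower bound $(I_Z,E)>-4r$ feeding the discriminant estimate, uses $\mu_H(I_Z)=0$, again false after twisting and tilting the slice). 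More seriously, there is a structural reason to doubt that a purely numerical bound can close: the admissible destabilizers are the \emph{effective} spherical classes, and effectivity is not detected by $(r,d_h,d_g,s,m)$ alone. The paper's own analysis exhibits the borderline configurations --- when $n=r(r+1)$, the subobject $\mathcal O^{\oplus r}$ of $\mathcal O_C(-Z)\otimes\mathcal O(C)$ with spherical quotient the shifted Lazarsfeld--Mukai bundle --- whose wall data coincide with those of $\mathcal W_1$ and which are eliminated only by the geometric fact that a generic line bundle of degree $n-2$ on the genus-$n$ curve $C$ has no sections. No discriminant inequality sees this.

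The paper goes around the uniformity problem instead of through it. It argues by contradiction and continuity in $m$: if $\mathcal W_1$ fails to be outermost on some $P_{m_1}$ with $m_1>n$, then, since it is outermost on $P_n$ and walls of $v_1$ on each slice are disjoint, the offending wall $\mathcal W'$ must \emph{coincide} with $\mathcal W_1$ on some intermediate slice $P_{m_0}$. All the work is then localized at that one degenerate slice, where the Jordan--H\"older filtration of a generic $I_Z(C)$ is known to be $\mathcal O\hookrightarrow I_Z(C)\twoheadrightarrow \mathcal O_C(-Z)\otimes\mathcal O(C)$; by \cref{lemma5} the factor $\mathcal O$ stays stable there, so the stable spherical object causing $\mathcal W'$ would have to destabilize $\mathcal O_C(-Z)\otimes\mathcal O(C)$. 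Equating the two wall equations forces $ch_2(E')=d_h(E')=0$ for the destabilizing subobject, hence $E'=\mathcal O^{\oplus r}$ with $n=r(r+1)$, and the Abel--Jacobi/Brill--Noether genericity of $Z$ rules this out. If you want to salvage your direct approach, you would at minimum have to import this genericity input and reprove the structural lemmas on $P_m$; the estimate alone will not suffice.
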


\begin{proof}
Assume for contradiction that on $P_{m_1},$ for some $m_1>n,$ $\mathcal W_1$ is not the first totally semistable wall along any vertical ray. This means that on $P_{m_1}$ there exists a bigger totally semistable wall $\mathcal W'$ of $v_1$ (semicircle) contains $\mathcal W_1,$ because walls of a fixed Mukai vector are nested. Then, $\mathcal W_1$ and $\mathcal W'$ must coincide on some $P_{m_0},$ $m_0>n$. Suppose $\mathcal W'$ is caused by another spherical object $\mathcal S'.$

Note that at $\mathcal W_1 \cap P_{n},$ the Jordan-H\"older filtration is $$0\to \mathcal O \to I_Z(C) \to \mathcal O_C(-Z)\otimes \mathcal O(C)\to 0,$$ where $C:=c+nf.$ By \cref{lemma5}, $\mathcal O$ is always stable near $\mathcal W' \cap P_{m_0}.$ So $\mathcal S'$ must be contained in $\mathcal O_C(-Z) \otimes \mathcal O(C).$ Since $(\mathcal S', \mathcal O_C(-Z))\otimes \mathcal O(C))<(\mathcal S', I_Z(C)<0$, $\mathcal S'$ also causes a totally semistable wall of $v_0=v(\mathcal O_C(-Z) \otimes \mathcal O(C)).$ 

Thus we have a destablizing sequence $E'\to \mathcal O_C(-Z)\otimes \mathcal O(C) \to Q'$ with either $E'$ or $Q'$ being the stable spherical object $\mathcal S'$. Now consider $$0\to H^{-1}(Q') \to E' \to \mathcal O_C(-Z)\otimes \mathcal O(C) \to H^0(Q') \to 0,$$ here $H^0(Q')$ is supported on points at most, since we can assume $\mathcal O_C(-Z)\otimes \mathcal O(C)$ is generic in moduli and therefore $C$ is irreducible. Also $H^{-1}(Q')$ is not zero, because otherwise $Q'$ and $E'$ cannot possibly be spherical.

Write $ch(E') = (r, d_hH+d_gG, ch_2),$ where $H=\frac{c+mf}{\sqrt{2m-2}}$ and $G=\frac{c+(2-m)f}{\sqrt{2m-2}}.$ Then the equation of $\mathcal W'$ on $P_H$ is $$(2-m-n)r(u^2+t^2)-2r\sqrt{2m-2}u + 2((m+n-2)ch_2+\sqrt{2m-2}d_h)=0.$$ Compare it to the equation of $\mathcal W_1$ (see \cref{eq7}), one sees that a necessary and sufficient condition for two walls to overlap is 
\begin{equation}\label{eq8}
(m+n-2)ch_2+\sqrt{2m-2}d_h=0.
\end{equation}
For each $(u,t)\in \mathcal W'\cap P_H,$ we have $\frac{d_h(E')}{r(E')}\geq u.$ Note that $r(E')>0$ because $H^{-1}(Q')\neq 0.$ Since $u$ can be sufficiently close to $0,$ we see that $d_h(E')\geq 0.$ On the other hand, $d_h(H^{-1}(Q'))<ur(H^{-1}(Q'))<0$ and therefore $$0\leq d_h(E')<d_h(\mathcal O_C(-Z)\otimes \mathcal O(C))=\dfrac{m+n-2}{\sqrt{2m-2}}.$$ By \cref{eq8}, we then have $0\leq ch_2(E') <1.$ Because $X$ is a K3 surface, $ch_2(E')$ is integral. Thus $ch_2(E')=0$ and $d_h(E')=0.$ 

Consequently, the Mukai vector $v(E')=(r, k(c+(2-m)f), r)$ and $(E', E')=-2k^2(m-1)-2r^2,$ so it cannot be spherical unless it is $\mathcal O_X$, but that would contradict our assumption. Thus, $Q'$ has to be spherical. Note that by Mukai's lemma, $Ext^1(H^0(Q'), H^0(Q'))=0$ and therefore $H^0(Q')=0.$ The equality $(Q', Q')=-2$ gives
\begin{equation}\label{eq9}
-(m-1)k^2+n+k(m-n)=r(r+1).
\end{equation}
Given that $r\geq 1$ and $m\geq n,$ \cref{eq9} holds only when $k=0$ and $n=r(r+1).$ In particular, $E'={\mathcal O}^r.$ 

Therefore we have a surjective map $\mathcal O_X^r \rightarrow \mathcal O_C(-Z)\otimes \mathcal O(C),$ which must factor through $\mathcal O_C^r \to \mathcal O_C(-Z)\otimes \mathcal O(C)$. Via the Abel-Jacobi map $$Sym^n(C) \to Pic^{-n}(C)\cong Pic^{n-2}(C),$$ we see that $\mathcal O_C(-Z)\otimes \mathcal O(C)$ is generic in $Pic^{n-2},$ provided that $Z\subset C$ is generic. However, a generic element in $Pic^{n-2}(C)$ is not effective since $Sym^{n-2}(C)\to Pic^{n-2}(C)$ is not surjective. This is a contradiction.
\end{proof}

\begin{rmk}
$\mathcal W_1$ is actually a wall for $\mathcal O_C(-Z)\otimes \mathcal O(C),$ although not totally semistable. Indeed, the destablized objects are precisely those in the Brill-Noether variety $W_{n-2}^{r-1}(C),$ namely, degree $n-2$ line bundles on $C$ that are globally generated with $r$ sections. Note that its dimension $dim\ W_{n-2}^{r-1}(C) = n - r(r+1)=0.$ And the destablizing sequence is $$\mathcal O^r \to \mathcal O_C(-Z)\otimes \mathcal O(C)\to F'[1],$$ where $F'$ is the Lazasfeld-Mukai bundle. See \cite{Bayer}.
\end{rmk}

\subsection{Proofs of \cref{prop1} \& \cref{prop2}}\label{sec3.2} In this subsection we first prove \cref{prop2}, and then use it together with \cref{lemma6} and O'Grady's result to prove \cref{prop1}. Recall $\mathcal W_r, \mathcal S_r, E_r, s_r $ and $v_r$ from the beginning of this section.

\begin{lemma}\label{lemma5}
Choose a polarization $H=\frac{\mathcal O(c+mf)}{\sqrt{2m-2}}$ with $m$ sufficiently large, then the spherical object $\mathcal S_{r-1}$ is stable at the wall $\mathcal W_r$ on $P_H$.
\end{lemma}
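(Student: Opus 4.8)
The plan is to argue by contradiction. Suppose $\mathcal S_{r-1}=\mathcal O(2(r-1)f)$ is \emph{not} $\sigma_0$-stable at a generic $\sigma_0\in\mathcal W_r$. Since $s_{r-1}\in\mathcal H_{\mathcal W_r}$ is an effective spherical class causing the wall, the line bundle $\mathcal S_{r-1}$ is $\sigma_0$-semistable; being spherical, \cref{mukaicor} forces all of its Jordan--H\"older factors to be stable spherical objects. Hence $\mathcal S_{r-1}$ would admit a \emph{proper} stable spherical subobject or quotient $A$, and by genericity of $\sigma_0$ the class $v(A)$ lies in the rank-two lattice $\mathcal H_{\mathcal W_r}$. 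The goal is then to show no such $A$ can exist once $m$ is large, uniformly in $1\le r\le R$. Note the case $r=1$ (where $\mathcal S_0=\mathcal O_X$) is already handled by \cref{prop3} and \cref{cor1}, which exhibit $\mathcal O$ as a \emph{stable} spherical factor at $\mathcal W_1$; the present lemma is the analogue for higher $r$.

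First I would pin down the shape of $A$ by the cohomology-sheaf bookkeeping already used in \cref{lemma1} and \cref{lemma3}: writing the destabilizing triangle $A\to\mathcal S_{r-1}\to B$ and passing to the long exact sequence of cohomology sheaves, the fact that $\mathcal S_{r-1}$ is a rank-one torsion-free sheaf forces $A$ (in the subobject case) to be an honest subsheaf of the form $I_W(-D)\otimes\mathcal S_{r-1}$, which sphericity $(A,A)=-2$ cuts down to a line bundle, while in the quotient case $A\cong F[1]$ is a shifted torsion-free sheaf as in \cref{lemma3}. A clean reformulation is to twist by $\mathcal O(-2(r-1)f)$, which carries $\mathcal S_{r-1}$ to $\mathcal O_X$ and $E_r$ to $F_r$, and (as in \cref{cor1}) merely translates the slice $P_H$ horizontally; the statement becomes that $\mathcal O_X$ is stable at the image wall $\mathcal W_r'$, the wall of $v(F_r)$ caused by $\mathcal O_X$. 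In either formulation I would record $\mathrm{ch}(A)=(r',\,d_h'H+d_g'G,\,\mathrm{ch}_2')$ in the orthogonal basis $H,G$ fixed earlier, together with the heart constraint $\mathrm{Im}\,Z_{\sigma_0}(A)\ge 0$.

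I would then combine the equation \eqref{eq7} of $\mathcal W_r$ with the three constraints $(A,A)=-2$, $v(A)\in\mathcal H_{\mathcal W_r}$, and $A$ in the heart along $\mathcal W_r$ to compute the semicircle that a putative $A$ would cut out, and show that for $m\gg0$ it cannot coincide with $\mathcal W_r$ (equivalently, that any admissible spherical $A\subsetneq\mathcal S_{r-1}$ is squeezed out of the heart on $\mathcal W_r$), yielding the contradiction. The main obstacle is obtaining these estimates \emph{uniformly} in $r$: as $m\to\infty$ the slice $P_H$ itself moves and the walls $\mathcal W_r$ shrink, so the position of the destabilizer's wall must be controlled against $\mathcal W_r$ simultaneously for all $1\le r\le R$, and higher-rank spherical factors (permitted a priori by the $\mathcal H_{\mathcal W_r}$-analysis and Bayer--Macr\`i's description of the two stable spherical classes) must be excluded separately. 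I expect the decisive input to be a slope estimate showing that any spherical subobject of $\mathcal S_{r-1}$ lying in the heart has its wall strictly interior to $\mathcal W_r$, in the same spirit as the radius/center comparison that nests the $\mathcal W_r$ in \cref{lemma4}.
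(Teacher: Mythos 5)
Your overall strategy --- rule out every potential destabilizer of the line bundle $\mathcal S_{r-1}$ by a discriminant/center computation on the slice $P_H$ --- could in principle be made to work, but as written there are two genuine gaps. First, the opening reduction is flawed: from the fact that $s_{r-1}$ is an effective class in $C_{\mathcal W_r}$ you conclude that the object $\mathcal S_{r-1}$ is $\sigma_0$-semistable, but effectivity only guarantees that \emph{some} object of class $s_{r-1}$ is $\sigma_0$-semistable, not that this particular line bundle is; whether $\mathcal S_{r-1}$ is the stable representative of its class at $\mathcal W_r$ is exactly what the lemma asserts. Relatedly, constraining the putative destabilizer $A$ to have $v(A)\in\mathcal H_{\mathcal W_r}$ only works if $\mathcal S_{r-1}$ becomes strictly semistable \emph{precisely on} $\mathcal W_r$. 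The scenario you must exclude is that $\mathcal S_{r-1}$ crosses one of its \emph{own} walls strictly between the large-volume limit and $\mathcal W_r$: at such a wall the destabilizer has equal phase with $\mathcal S_{r-1}$ there, but its class need not lie in the rank-two lattice of $\mathcal W_r$, so your lattice-theoretic constraints (Jordan--H\"older factors spherical and in $\mathcal H_{\mathcal W_r}$, the hyperbola picture, etc.) do not apply to it. The correct framing is to control all walls for the class $s_{r-1}$ on $P_H$ that separate the large-volume limit from $\mathcal W_r$ along the chosen vertical ray. Second, the decisive estimate --- that no such wall exists for $m\gg0$, uniformly in $r$ --- is announced as a goal rather than carried out, and that is where all the content of the lemma lies.

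The paper short-circuits both difficulties by quoting Arcara--Miles \cite[Theorem 1.1]{AM}: a line bundle such as $\mathcal S_{r-1}=\mathcal O(2(r-1)f)$ can only possibly be destabilized by $\mathcal S_{r-1}(-c)$, with $c$ the section of the fibration. This leaves a single candidate wall, whose explicit semicircle equation \eqref{eq6} places it in the half-plane $\{u\ge 0\}$ and disjoint from the semicircle \eqref{eq7} of $\mathcal W_r$, which is centered in $\{u\le 0\}$; since $\mathcal S_{r-1}$ is stable at the large-volume limit, no wall for $s_{r-1}$ is crossed before reaching $\mathcal W_r$, and stability persists there. If you prefer not to invoke Arcara--Miles, you would essentially have to reprove their classification for this line bundle, i.e., carry out in full the rank-by-rank exclusion of destabilizers that your proposal only sketches.
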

\begin{proof}
By a result of Arcara and Miles \cite[theorem 1.1]{AM}, $\mathcal S_{r-1}$ can only be possibly destablized by $\mathcal S_{r-1}(-c),$ where $c$ is the section of the elliptic fibration. Such a wall has the following equation: 
\begin{equation}\label{eq6}
(2-m)(u^2+t^2) + 2\sqrt{2m-2}(2r-1)u - 4(2r-1)(r-1) = 0.
\end{equation}
In particular, it lies on the $\{u\geq 0\}$ half. 

An elementary computation shows that this semicircle \eqref{eq6} has no intersection with that of \eqref{eq7}, and since the latter centers at the $\{u \leq 0\}$ half, these two walls do not contain each other. As $\mathcal S_{r-1}$ is stable at large volume limit, it is stable at the wall $\mathcal W_r$ on $P_H.$
\end{proof}

\begin{proof}[Proof of \cref{prop2}]
We want to prove that at the wall $\mathcal W_r$ of $v_r$ caused by $s_{r-1}$, the wall-hitting derived equivalences $\Phi_\pm$ consist of one spherical twist (or inverse) respectively. Based on Bayer-Macr\`i's analysis, this is amount to show that $v_{r-1}$ pairs with all effective spherical classes non-negatively, in the rank two hyperbolic sublattice $\mathcal H_r$ of the wall. 

Note that by \cref{lemma5}, $\mathcal S_{r-1}$ is stable at the wall. If $\mathcal H_r$ is isotropic, then by \cref{BM8.3}, $s_{r-1}$ is the unique effective spherical class in $\mathcal H_r$ and $(s_{r-1}, v_{r-1})=1.$ If $\mathcal H_r$ is non-isotropic, $s_{r-1}$ is one of the two spherical classes that have a stable object in their moduli. Suppose that $t_0$ is the other one, then $(t_0, s_{r-1}) \geq 3.$ Now let's use $\{v_r, s_{r-1}\}$ as a (rational) basis of $\mathcal H_r$ and write $t=xs_{r-1}+yv_r.$ All spherical classes $t$ lie on a hyperbola: 
\begin{equation}\label{eq3}
-2=(t,t)= -2x^2+(2n-2)y^2-xy. 
\end{equation}
$(t, v_{r-1})=0$ defines a line: 
\begin{equation}\label{eq4}
y = -\dfrac{1}{2n-1}x.     
\end{equation}
Also we have the constraint $(t, s_{r-1})\geq 3,$ for all $t$ that are effective spherical and does not lie on the same branch with $s_{r-1}.$ This gives 
\begin{equation}\label{eq5}
2x+y \leq -3.
\end{equation}
 
\begin{figure}\label{graphs}
    \centering
    \includegraphics[width=10cm]{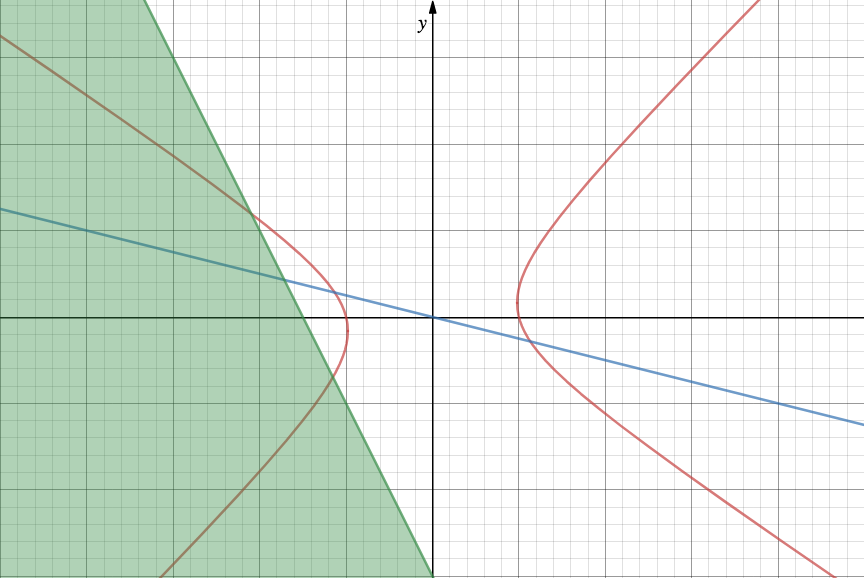}
    \caption{Graphs of \cref{eq3},\cref{eq4} and \cref{eq5}.}
    \label{graphs}
\end{figure}

The graphs of these equations (see \cref{graphs}) show that all effective spherical classes pair with $v_0$ positively. Indeed, on the right branch of the hyperbola, effective spherical classes are all above $s_{r-1}$ and therefore above the line $(v_{r-1},t)=0$. On the left branch, effective classes are to the left of the line $(s_{r-1},t) =3$ and on the upper half-plane, and consequently also above the line $(v_{r-1},t)=0$. 

Thus, $v_{r-1}$ is the minimal class in the orbit of $v_r$ and $\Phi_\pm$ both consist of one spherical twist.
\end{proof}


\begin{proof}[Proof of \cref{prop1}] 
We shall let $m$ be sufficiently large, then by \cref{lemma4} we have a path $\gamma:=\{\sigma_{uH, tH} | t:\infty \to 0\}$ for $H=c+mf$ and some suitable $u$ such that it passes through $\mathcal W_i,$ $1\leq i \leq r$ in decreasing order and at generic points of the walls. Let $\gamma(t_i)$ denotes the point when $\gamma$ meets $\mathcal W_i.$

By \cref{lemma6}, $\mathcal W_1$ is the first totally semistable wall for $v_1.$ Thus a generic stable object in the moduli $M_{\gamma(t)}(v_1),$ for $t>t_1,$ is of the form $I_Z(c+nf),$ since it is stable at large volume limit.

Then according to O'Grady's result and \cref{prop2}, the wall-hitting derived equivalence at $\mathcal W_i$ is the unique extension $$0\to \mathcal S_{i-1} \to E_i \to E_{i-1} \to 0.$$ In particular, a generic $E_i$ is stable for $t_i<t<\infty.$ That is, $\mathcal W_i$ is the first totally semistable wall of $v_i$ along $\gamma,$ for all $i\geq 1.$  
\end{proof}

\section{Remarks on strange duality for elliptic K3.}\label{sec4} In \cref{sec4.1}, we prove \cref{prop4}. In \cref{sec4.2}, we use wall-crossing and \cref{SD} to give two examples of strange duality for an elliptic K3 surface.

\subsection{Wall-crossing behavior of $SD_\sigma$.}\label{sec4.1} Throughout this subsection, $X$ is any K3 surface, $v, w$ are orthogonal Mukai vectors. Let $\mathcal W$ be a nonisotropic totally semistable wall of $v$ but not of $w$, $\sigma_0\in \mathcal W$ be generic at the wall, $\sigma_+$ and $\sigma_-$ are separated by $\mathcal W$ and sufficiently close to $\sigma_0$. To simplify notation, denote $SD_\pm$ the maps $SD_{\sigma_\pm}: H^0(M_{\sigma_\pm}(v), \theta_v(w)) \to H^0(M_{\sigma_\pm}(w), \theta_w(v))^*$ described in \cref{sec1.4}. 

\begin{lemma}\label{lemma4.1}
If $SD_+$ is defined, then so is $SD_-.$
\end{lemma}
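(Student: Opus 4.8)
The plan is to unpack what ``$SD_\sigma$ is defined'' actually requires and then verify those requirements for $\sigma_-$. Recall from \cref{detlb}(3) that, under the orthogonality between $v$ and $w$, the determinant classes $\theta_v^\sigma(w)$ and $\theta_w^\sigma(v)$, the canonical duality section cutting out $\Theta_\sigma$, and hence the induced morphism $SD_\sigma$, are all constructed as soon as $\sigma$ is generic with respect to \emph{both} $v$ and $w$ (so that $M_\sigma(v)$ and $M_\sigma(w)$ are irreducible normal projective and carry quasi-universal families). Note that this is what is needed for $SD_\sigma$ to exist as a morphism; whether the resulting map is zero or an isomorphism is governed by the vanishing of the duality section and is a separate question, deferred to \cref{prop4}. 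Since the orthogonality is a lattice condition independent of $\sigma$, the entire statement reduces to one genericity check: that $\sigma_-$ is generic with respect to both $v$ and $w$. The hypothesis that $SD_+$ is defined already supplies the analogous statement for $\sigma_+$.

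The first step is to fix $\sigma_0$ to be a \emph{generic} point of $\mathcal W$, by which I mean a point lying on no wall other than $\mathcal W$ itself, for either $v$ or $w$. This is possible by local finiteness of the wall-and-chamber decompositions of $v$ and of $w$, together with the standing hypothesis that $\mathcal W$ is a wall of $v$ but \emph{not} of $w$: the latter means $\mathcal W$ is not contained in any wall of $w$, so $\mathcal W\cap\{\text{walls of }w\}$ is nowhere dense in $\mathcal W$, and a generic $\sigma_0\in\mathcal W$ avoids it. Taking $\sigma_\pm$ in the two adjacent chambers and sufficiently close to $\sigma_0$, I then argue that $\sigma_-$ is generic with respect to $v$ (it lies off $\mathcal W$ and, by closeness to $\sigma_0$ and local finiteness, off every other wall of $v$), and generic with respect to $w$ (since $\mathcal W$ is not a wall of $w$ and $\sigma_0$ lies on no wall of $w$, a whole neighbourhood of $\sigma_0$ is disjoint from the $w$-walls, so in particular $M_{\sigma_-}(w)=M_{\sigma_+}(w)$). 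Feeding this back into \cref{detlb}(3) produces $\theta_v^{\sigma_-}(w)$ and $\theta_w^{\sigma_-}(v)$, and $SD_-$ is then formed exactly as $SD_+$ was.

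The place I would be most careful — and the main (if modest) obstacle — is precisely the choice of $\sigma_0$: one must be able to pick ``generic in $\mathcal W$'' so as to avoid the walls of \emph{both} classes at once, so that the nearby $\sigma_\pm$ are simultaneously generic for $v$ and for $w$. This is exactly the step that consumes the hypothesis ``$\mathcal W$ is a wall of $v$ but not of $w$''. I would remark that if instead one reads ``$SD_-$ defined'' in the stricter sense that $\Theta_{\sigma_-}$ be a genuine (proper) effective divisor, then the genericity argument no longer suffices and the obstacle shifts: one would transport properness across $\mathcal W$ using the wall-hitting derived equivalence $\Phi$ of \cref{BMthm2}, via the identity $\operatorname{Ext}^1(F,E_-)=\operatorname{Ext}^1(\Phi^{-1}F,E_+)$ for $E_-=\Phi(E_+)$, and would have to control $\Phi^{-1}F$ together with the relevant Hom-vanishing — which is genuine work and is really the content of the zero-versus-isomorphism dichotomy of \cref{prop4} rather than of this preliminary lemma.
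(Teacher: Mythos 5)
There is a genuine gap here, and it is precisely the point you flag and then defer at the end of your proposal. In this paper ``$SD_\sigma$ is defined'' does not merely mean that $M_\sigma(v)$, $M_\sigma(w)$ and the classes $\theta_v^\sigma(w)$, $\theta_w^\sigma(v)$ exist; it means that the canonical section of $\theta_v(w)\boxtimes\theta_w(v)$ cutting out $\Theta_\sigma$ is constructed, and by Le Potier's criterion (the reference [LeP, Proposition 9] invoked in the paper) this requires the vanishing $Ext^2(E,F)=0$ for all pairs, so that $\Theta_\sigma$ becomes the degeneracy locus of a map of vector bundles of equal rank. Your assertion that the duality section ``is constructed as soon as $\sigma$ is generic with respect to both $v$ and $w$'' is therefore not correct as stated, and the genericity discussion of $\sigma_0$ and $\sigma_\pm$, while harmless, carries essentially none of the content of the lemma. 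Note also that your proposed ``stricter sense'' --- that $\Theta_{\sigma_-}$ be a \emph{proper} effective divisor --- is not the right notion either: \cref{prop4} explicitly allows the conclusion $SD_{\sigma_-}=0$, which corresponds to the section vanishing identically, so ``defined'' must mean that the section exists, not that it is nonzero.

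The paper closes this gap by a phase argument, not by transporting anything through the wall-hitting equivalence of \cref{BMthm2}. First, $\phi_0(v)\neq\phi_0(w)$ at the generic point $\sigma_0\in\mathcal W$: otherwise both $v$ and $w$ would lie in the rank-two lattice $\mathcal H_{\mathcal W}$, but two linearly independent orthogonal classes with $(v,v)>0$ and $(w,w)>0$ span a positive-definite plane, contradicting the hyperbolicity of $\mathcal H_{\mathcal W}$. Second, since $SD_+$ is defined the locus $\Theta_+$ is nonempty, which forces the strict phase inequality $\phi^+(w)>\phi^+(v)$; by continuity and the fact that the two phases do not collide at $\sigma_0$, the same strict inequality holds at $\sigma_-$. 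Stability then gives $Hom(F',E')=0$, hence $Ext^2(E',F')\cong Hom(F',E')^*=0$ by Serre duality, for all $(E',F')$ on the $\sigma_-$ side, and Le Potier's criterion yields the section defining $SD_-$. None of these three steps (the hyperbolic-lattice argument, the persistence of the phase inequality, the resulting $Ext^2$-vanishing) appears in your proposal, so as written it does not prove the lemma.
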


\begin{proof}
First we note that $\phi_0(v)\neq \phi_0(w),$ where $\phi_0$ is the phase function at $\sigma_0$: suppose otherwise, then $v, w\in \mathcal H_{\mathcal W}$, the rank 2 hyperbolic sublattice associate to $\mathcal W,$ however the assumptions $(v, w)=0$, $v^2>0$, $w^2>0$ imply that $v$ and $w$ are linearly independent but does not span a hyperbolic lattice, contradition.  

Now $SD_+$ is defined, in particular the locus $\Theta_+$ is nonempty, i.e. $Hom(E, F)\neq 0$ for some $(E, F)\in M_{\sigma_+}(v)\times M_{\sigma_+}(w)$, thus we should have $\phi^+(w)>\phi^+(v).$ Then $\phi_-(w)\approx \phi_+(w) > \phi_+(v) \approx \phi^-(v)$ as $\sigma_+$, $\sigma_-$ are sufficiently close. Thus $Ext^2(E', F')\cong Hom(F', E')^*= 0$ for all $(E', F')\in M_{\sigma_+}(v)\times M_{\sigma_+}(w)$. Then by \cite[proposition 9]{LeP}, $SD_-$ are well-defined. 
\end{proof}

Recall that at the totally semistable wall $\mathcal W,$ if $\phi^+(v)>\phi^+(v_0)$, then $\psi_+:M_{\sigma_0}(v_0) \dashrightarrow M_{\sigma_+}(v)$ is induced by a series of spherical twists. Let $E_0\in M_{\sigma_0}^s(v_0)$, $E_{i+1}:=ST_{S_{i+1}^+}(E_i)$ as in \cref{sec2}.

\begin{lemma}\label{lemma4.2}
For $i\geq 0$,  then we have an surjection $Ext^1(ST_{S_{i+1}}(E_i), F)\twoheadrightarrow Ext^1(E_i, F)$, for any $F\in M_{\sigma}(w)$.
\end{lemma}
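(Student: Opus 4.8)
The plan is to apply the contravariant functor $RHom(-,F)$ to the defining triangle of the spherical twist and to read off the surjection from the resulting long exact sequence. Writing $S:=S_{i+1}^+$, the twist $E_{i+1}=ST_{S}(E_i)$ sits in the distinguished triangle
\[
RHom(S,E_i)\otimes S \xrightarrow{\ ev\ } E_i \to E_{i+1}\to RHom(S,E_i)\otimes S[1].
\]
Applying $RHom(-,F)$ and passing to cohomology gives
\[
\cdots \to Ext^1(E_{i+1},F)\to Ext^1(E_i,F)\xrightarrow{\ \beta\ } H^1\big(RHom(S,E_i)^\vee\otimes RHom(S,F)\big)\to\cdots,
\]
so the map $Ext^1(E_{i+1},F)\to Ext^1(E_i,F)$ is surjective as soon as the target of the connecting map $\beta$ vanishes. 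I would therefore aim to show this target group is zero.

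A K\"unneth-type decomposition, together with the standard phase/Serre-duality bounds on a K3 surface (all of $S$, $E_i$ have $\sigma_0$-phase $\phi_0$ and $F$ has nearby phase, so only two degrees survive), identifies the target with
\[
H^1\big(RHom(S,E_i)^\vee\otimes RHom(S,F)\big)\cong \big(Hom(S,E_i)^\ast\otimes Ext^1(S,F)\big)\ \oplus\ \big(Ext^1(S,E_i)^\ast\otimes Ext^2(S,F)\big).
\]
The second summand dies immediately: as noted in the proof of \cref{lemma4.1} we have $\phi_0(w)>\phi_0(v)$, while $v(S)\in\mathcal H_{\mathcal W}$ forces $\phi_0(S)=\phi_0(v)$; hence a nonzero map $F\to S$ would exhibit a quotient of the $\sigma_0$-stable $F$ of phase $\ge\phi_0(w)$ as a subobject of the $\sigma_0$-semistable $S$ of phase $\phi_0(v)$, which is impossible. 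Thus $Hom(F,S)=0$, and by Serre duality $Ext^2(S,F)\cong Hom(F,S)^\ast=0$.

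The crux, which I expect to be the main obstacle, is the vanishing $Hom(S,E_i)=0$ killing the first summand. I would prove by induction on $i$ that $Hom(S',E_i)=0$ for each of the (only two, by \cite{BM2}) $\sigma_0$-stable spherical objects $S'$ in $\mathcal H_{\mathcal W}$; the desired statement then follows by d\'evissage, since $S=S_{i+1}^+$ has $\sigma_0$-Jordan--H\"older factors among them (\cref{mukaicor}). The base case $i=0$ is pure stability: $E_0$ is $\sigma_0$-stable with $(v_0,v_0)=(v,v)>0$, hence not spherical, so it is not a Jordan--H\"older factor of $S'$, and a nonzero map $S'\to E_0$ of equal phase would force a common factor. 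For the inductive step I would use that $Hom(S_i^+,E_{i-1})=0$ (inductive hypothesis plus d\'evissage) makes $ev$ the universal extension: the induced map
\[
Ext^1(S_i^+,E_{i-1})\otimes Hom(S',S_i^+)\;\longrightarrow\;Ext^1(S',E_{i-1}),\qquad \eta\otimes g\mapsto \eta\circ g,
\]
is, when $S'=S_i^+$, the identity of $Ext^1(S_i^+,E_{i-1})$ (its class is the identity endomorphism), and when $S'\neq S_i^+$ it has vanishing source since distinct $\sigma_0$-stable objects admit no homomorphisms. In either case the long exact sequence for $Hom(S',-)$ forces $Hom(S',E_i)=0$.

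The delicate points I anticipate are twofold. First, one must keep the bookkeeping of the twist sequence under control: reconciling the $\sigma_+$-stable labelling $S_i^+$ used in \cref{sec2} with the $\sigma_0$-stable objects $S',$ and checking that the d\'evissage over Jordan--H\"older factors is compatible at every stage. Second, one must verify rigorously that each elementary step is the claimed universal extension even when $Ext^2(S_i^+,E_{i-1})=Hom(E_{i-1},S_i^+)^\ast$ is nonzero (as already happens after the first twist); the argument above only needs $Hom(S_i^+,E_{i-1})=0$ to split off the relevant degree, but carrying this through the whole chain is where the finiteness afforded by the rank-two structure of $\mathcal H_{\mathcal W}$ becomes essential.
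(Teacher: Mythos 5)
Your reduction is exactly the paper's: apply $RHom(-,F)$ to the defining triangle of the twist, identify the obstruction to surjectivity with $\bigoplus_j Ext^j(S,E_i)^*\otimes Ext^{j+1}(S,F)$, kill the $Ext^2(S,F)$ factor by Serre duality and the phase inequality $\phi_0(v)<\phi_0(w)$ established in \cref{lemma4.1}, and reduce everything to the single vanishing $Hom(S,E_i)=0$. Up to that point the proposal is correct and coincides with the paper's argument.

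The gap is in your inductive proof of $Hom(S,E_i)=0$, and it sits exactly where you said you were worried. In the inductive step you discard the summand $Ext^1(S_i^+,E_{i-1})\otimes Hom(S',S_i^+)$ for $S'\neq S_i^+$ on the grounds that distinct $\sigma_0$-stable objects admit no homomorphisms; but $S_i^+$ is only $\sigma_+$-stable, and the paper itself (in the proof of \cref{lemma4.5}) works precisely in the regime where $S_i^+$ is strictly $\sigma_0$-semistable with both $\sigma_0$-stable spherical objects $T_1,T_2$ among its Jordan--H\"older factors. In that case $Hom(S',S_i^+)$ can be nonzero for $S'\neq S_i^+$, and you would additionally need the composition map $Ext^1(S_i^+,E_{i-1})\otimes Hom(S',S_i^+)\to Ext^1(S',E_{i-1})$ to be injective --- your universal-extension argument only yields this when $S'=S_i^+$ and $Hom(S',S_i^+)=\mathbb C\cdot \mathrm{id}$. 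The paper does not run this induction at all: it simply invokes the ``Induction Claim'' inside the proof of \cite[Proposition 6.8]{BM2} (quoted here as \cref{BMthm2}), which asserts that at every stage of the twist sequence $S$ and $E_i$ are simple, non-isomorphic objects of a suitable abelian subcategory obtained by tilting the category of $\sigma_0$-semistable objects of the given phase, whence $Hom(S,E_i)=Hom(E_i,S)=0$ for free. Either cite that claim as the paper does, or repair your induction by working in those tilted categories rather than with $\sigma_0$-Jordan--H\"older d\'evissage alone.
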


\begin{proof} To simplify notation, let $E:=E_i$, $S:=S_{i+1}^+$, so that $E_{i+1}= ST_S (E)$. By definition of spherical twist, the defining distinguished triangles $$\bigoplus_{j} Hom(S[j], E)\otimes S[j] \to E \to ST_S (E)$$ gives an exact sequence: $$ Ext^1 (ST_S(E) , F) \to Ext^1 (E, F)\to Ext^{1}(\bigoplus_j Hom(S[j], E])\otimes S[j], F).$$ According to [\cite{BM2}, Proposition 6.8], $E$ and $S$ lie in a heart of $D^b(X),$ by Serre duality $$Ext^{1}(\bigoplus_j Hom(S[j], E])\otimes S[j], F) = \bigoplus_{j=0}^{2} Hom(S, E[j])\otimes Hom(S, F[j+1]).$$

\noindent Note that $Hom(S, F[2])= Hom(F, S)=0$, because $\phi^{\pm}(S)\approx \phi^{\pm}(v) < \phi^{\pm}(w)$. By the "Induction Claim" in the proof of \cite[Proposition 6.8]{BM2}, $S$ and $E$ are both simple in a certain additive subcategory, and therefore $Hom(S, E)= Hom(E, S)= 0$. Hence, $ Ext^{1}(\bigoplus_j Hom(S[j], E])\otimes S[j], F)=0 $. 

\end{proof}

Still assume that $\phi^+(v)>\phi^+(v_0)$, the other map $\psi_-:M_{\sigma_0}(v_0) \dashrightarrow M_{\sigma_-}(v)$ is induced by a series of inverse spherical twists. Let $E_0\in M_{\sigma_0}^{st}(v_0)$ as before, but reset $E_{i-1}:=ST_{S_{i}^-}^{-1}(E_{i})$.

\begin{lemma} \label{lemma4.3}
For $i\leq 0$, we have an injection $Hom(E_i, F)\hookrightarrow Hom(ST_S^{-1}(E_i), F)$, for any $F\in M_{\sigma}(w)$.
\end{lemma}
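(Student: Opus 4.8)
The plan is to mimic the structure of the proof of \cref{lemma4.2}, but dualized, since this lemma is the exact Serre-dual counterpart: there we obtained a surjection on $\mathrm{Ext}^1$ coming from a spherical twist, and here we want an injection on $\mathrm{Hom}$ coming from an inverse spherical twist. First I would write down the defining triangle of the inverse spherical twist. For $S := S_i^-$ and $E := E_i$, the inverse twist $E_{i-1} = ST_S^{-1}(E)$ sits in a distinguished triangle of the form
\begin{equation*}
ST_S^{-1}(E) \to E \to \bigoplus_j \mathrm{Hom}(E, S[j])^* \otimes S[j],
\end{equation*}
so that applying $\mathrm{Hom}(-, F)$ yields a long exact sequence whose relevant portion reads
\begin{equation*}
\mathrm{Hom}\Big(\bigoplus_j \mathrm{Hom}(E,S[j])^*\otimes S[j],\, F\Big) \to \mathrm{Hom}(E, F) \to \mathrm{Hom}(ST_S^{-1}(E), F).
\end{equation*}
The desired injection $\mathrm{Hom}(E_i, F) \hookrightarrow \mathrm{Hom}(ST_S^{-1}(E_i), F)$ follows as soon as the leftmost term vanishes.

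The key step, then, is to show that $\mathrm{Hom}(\bigoplus_j \mathrm{Hom}(E,S[j])^*\otimes S[j], F) = 0$. This term is a direct sum of copies of $\mathrm{Hom}(S, F)$ (indexed by $j$ with multiplicity $\dim \mathrm{Hom}(E, S[j])$), so it suffices to argue $\mathrm{Hom}(S, F) = 0$. This is precisely the phase inequality already used in \cref{lemma4.2}: the spherical objects $S = S_i^-$ appearing in the inverse-twist decomposition are $\sigma_-$-stable with $\phi^{\pm}(S) \approx \phi^{\pm}(v) < \phi^{\pm}(w)$, while $F \in M_\sigma(w)$ is stable of phase $\phi(w)$; a nonzero map from the lower-phase stable object $S$ to the higher-phase stable object $F$ would violate stability, forcing $\mathrm{Hom}(S, F) = 0$. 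I would invoke the same facts from \cite{BM2}, Proposition 6.8, that place $S$, $E$, and $F$ in a common heart and give the phase comparison, just as in the previous lemma.

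The one point requiring care — and the likely main obstacle — is getting the shape of the inverse-twist triangle exactly right and confirming the index set of the direct sum, so that every summand really is of the form $\mathrm{Hom}(S, F[k])$ with $k$ in a range covered by the phase vanishing. In \cref{lemma4.2} the author needed three separate inputs to kill the analogous term: the phase inequality handling the top Serre-dual piece $\mathrm{Hom}(F, S) = 0$, plus the simplicity statement $\mathrm{Hom}(S, E) = \mathrm{Hom}(E, S) = 0$ from the \emph{Induction Claim}. For the present lemma, since we only need $\mathrm{Hom}$ (degree zero) rather than $\mathrm{Ext}^1$, the vanishing should be cleaner: only $\mathrm{Hom}(S, F)$ in degree zero enters directly, and the phase inequality $\phi(S) < \phi(F)$ dispatches it immediately. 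I would therefore expect this lemma to be slightly shorter than \cref{lemma4.2}, with the bookkeeping of the triangle's direct-sum indices being the only spot where a sign or shift error could creep in.
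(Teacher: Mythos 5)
Your setup is right---write the defining triangle of $ST_S^{-1}$, apply $\mathrm{Hom}(-,F)$, and kill the end term $\mathrm{Hom}\bigl(\bigoplus_j \mathrm{Hom}(E,S[j])^*\otimes S[j],\,F\bigr)$---and this is indeed all the paper means by ``dual to \cref{lemma4.2}''. But your vanishing argument for that term has a genuine error. First, the term is not a direct sum of copies of $\mathrm{Hom}(S,F)$: the $j$-th summand is $\mathrm{Hom}(E,S[j])\otimes\mathrm{Hom}(S,F[-j])$, so different shifts occur. More seriously, the claimed vanishing $\mathrm{Hom}(S,F)=0$ from $\phi(S)<\phi(F)$ runs the phase inequality backwards: for semistable objects one has $\mathrm{Hom}(A,B)=0$ when $\phi(A)>\phi(B)$, so $\phi(S)<\phi(F)$ gives $\mathrm{Hom}(F,S)=0$ (which is what \cref{lemma4.2} actually uses, via $\mathrm{Hom}(S,F[2])\cong\mathrm{Hom}(F,S)^*$) and says nothing about $\mathrm{Hom}(S,F)$. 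Indeed $\mathrm{Hom}(S_i^{\pm},F)$ is nonzero precisely on the divisors $D_i^{\pm}$ that appear in \cref{lemma4.5}, so it certainly does not vanish identically, and your ``cleaner'' shortcut collapses.

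The repair is that you still need the \emph{Induction Claim} you explicitly set aside: the $j=0$ summand $\mathrm{Hom}(E,S)\otimes\mathrm{Hom}(S,F)$ dies because $\mathrm{Hom}(E,S)=0$ ($E$ and $S$ being simple objects of the relevant subcategory in \cite[Proposition 6.8]{BM2}), not because of any vanishing of $\mathrm{Hom}(S,F)$. The $j=1,2$ summands die for degree reasons: $S$ and $F$ lie in a common heart, so $\mathrm{Hom}(S,F[-j])=0$ for $j>0$ (and likewise $\mathrm{Hom}(E,S[2])\cong\mathrm{Hom}(S,E)^*=0$). With those substitutions the long exact sequence gives the injection and the argument is genuinely dual to \cref{lemma4.2}.
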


\begin{proof}
This is dual to \cref{lemma4.2}.
\end{proof}

\begin{lemma}\label{lemma4.4}
 Suppose that $(w, v_0)\neq 0.$ If $SD_+\neq 0$ (resp. $SD_-\neq 0$), then $SD_- =0$ (resp. $SD_+=0$).
\end{lemma}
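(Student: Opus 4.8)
The plan is to track the strange duality pairing through the wall-hitting transformation and compare the dimensions of the relevant cohomology spaces on the two sides of the wall $\mathcal{W}$. Recall that $SD_{\pm}$ is defined by the effective theta divisor $\Theta_{\pm} = \{(E,F) : \mathrm{Ext}^1(E,F) \neq 0\}$, and the key numerical fact is that $\chi(E,F) = -(v,w) = 0$ for $(E,F) \in M_{\sigma_{\pm}}(v) \times M_{\sigma_{\pm}}(w)$. As established in \cref{lemma4.1}, on both sides the phases satisfy $\phi^{\pm}(w) > \phi^{\pm}(v)$, so $\mathrm{Hom}(E,F) = 0$ and $\mathrm{Ext}^2(E,F) = \mathrm{Hom}(F,E)^* = 0$ for generic pairs; hence $SD_{\pm} \neq 0$ precisely when $\mathrm{Ext}^1(E,F) = 0$ for a generic pair, i.e. when $\Theta_{\pm}$ is a genuine divisor rather than the whole product.

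First I would set up the comparison on the intermediate moduli $M_{\sigma_0}(v_0)$. Let $E_0 \in M_{\sigma_0}^{st}(v_0)$ be generic and let $F \in M_{\sigma}(w)$. The transformations $\psi_+$ and $\psi_-$ relate $E_0$ to generic objects $E^+ \in M_{\sigma_+}(v)$ and $E^- \in M_{\sigma_-}(v)$ via compositions of spherical twists and inverse spherical twists respectively. By \cref{lemma4.2}, applying each spherical twist $ST_{S^+_{i+1}}$ gives a surjection $\mathrm{Ext}^1(E_{i+1},F) \twoheadrightarrow \mathrm{Ext}^1(E_i,F)$; composing these along $\psi_+$ yields a surjection $\mathrm{Ext}^1(E^+,F) \twoheadrightarrow \mathrm{Ext}^1(E_0,F)$. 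Dually, by \cref{lemma4.3}, applying each inverse spherical twist gives an injection $\mathrm{Hom}(E_i,F) \hookrightarrow \mathrm{Hom}(E_{i-1},F)$; but since $\phi^-(v) < \phi^-(w)$ forces $\mathrm{Hom}(E^-,F) = 0$ by stability, I instead want to track $\mathrm{Ext}^1$ through the inverse twists. The clean way is to observe that each inverse spherical twist triangle produces, for the $\mathrm{Ext}^1$ term, an \emph{injection} $\mathrm{Ext}^1(E_i,F) \hookrightarrow \mathrm{Ext}^1(E_{i-1},F)$ once the correction terms $\mathrm{Hom}(S^-,E_i[j]) \otimes \mathrm{Hom}(S^-,F[j+1])$ vanish, which holds for the same reasons as in \cref{lemma4.2}. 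Composing along $\psi_-$ then gives an injection $\mathrm{Ext}^1(E_0,F) \hookrightarrow \mathrm{Ext}^1(E^-,F)$.

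Second I would feed in the hypothesis $SD_+ \neq 0$ (respectively $SD_- \neq 0$). If $SD_+ \neq 0$, then for generic $(E^+,F)$ we have $\mathrm{Ext}^1(E^+,F) = 0$; by the surjection above this forces $\mathrm{Ext}^1(E_0,F) = 0$ for generic $E_0$ and generic $F$. Here is where the assumption $(w,v_0) \neq 0$ enters decisively: since $\chi(E_0,F) = -(v_0,w) \neq 0$, we cannot have all three of $\mathrm{Hom}$, $\mathrm{Ext}^1$, $\mathrm{Ext}^2$ vanish. The phase inequality on the wall gives $\phi_0(w) > \phi_0(v_0)$ or its reverse; combined with $\mathrm{Ext}^1(E_0,F)=0$ and $(v_0,w) \neq 0$, exactly one of $\mathrm{Hom}(E_0,F)$ or $\mathrm{Hom}(F,E_0)$ must be nonzero, and this nonvanishing then propagates through the injection of the opposite side: $\mathrm{Ext}^1(E_0,F) \hookrightarrow \mathrm{Ext}^1(E^-,F)$ being an injection from a zero space is vacuous, so I would instead argue that the \emph{nonvanishing} of $\mathrm{Ext}^1(E^-,F)$ on the $\sigma_-$ side is forced by the persistence of the $\mathrm{Hom}$ term through the triangles, making $\Theta_-$ equal to the whole product and hence $SD_- = 0$.

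The main obstacle, and the step requiring the most care, is the sign/direction bookkeeping in the previous paragraph: one must pin down precisely which of $\mathrm{Hom}(E_0,F)$ and $\mathrm{Hom}(F,E_0)$ survives, and verify that the vanishing of $\mathrm{Ext}^1$ on one side genuinely forces the $\Theta$-locus on the other side to be all of $M_{\sigma_-}(v) \times M_{\sigma_-}(w)$ rather than merely a different divisor. The clean mechanism is that $SD_+ \neq 0$ means $\Theta_+$ is a proper divisor, so generically $\mathrm{Ext}^1(E^+,F)=0$; the surjection of \cref{lemma4.2} transports this to $\mathrm{Ext}^1(E_0,F)=0$; then $\chi(E_0,F) = -(v_0,w) \neq 0$ forces a jump in $\mathrm{Hom}$ or $\mathrm{Hom}^{\vee}$ that, tracked through the inverse twists to the $\sigma_-$ side (using that $\mathrm{Ext}^2$ is controlled by the dual phase inequality), makes $\mathrm{Ext}^1(E^-,F) \neq 0$ generically, i.e. $\Theta_- = M_{\sigma_-}(v) \times M_{\sigma_-}(w)$ and $SD_- = 0$. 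I would present the argument for the case $SD_+ \neq 0 \Rightarrow SD_- = 0$ in full and note the other implication is symmetric by interchanging the roles of $\sigma_+$ and $\sigma_-$ (i.e. twists and inverse twists).
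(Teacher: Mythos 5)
Your overall strategy is the paper's: work on the intermediate moduli $M_{\sigma_0}(v_0)$, use the phase inequality at $\sigma_0$ to kill $Ext^2(E_0,F)$, use $(v_0,w)\neq 0$ together with the surjection of \cref{lemma4.2} to rule out $(v_0,w)>0$ (which would force $Ext^1(E^+,F)\neq 0$ everywhere, contradicting $SD_+\neq 0$), conclude $(v_0,w)<0$ so that $Hom(E_0,F)\neq 0$ for \emph{all} pairs, and push this to the $\sigma_-$ side via the injection of \cref{lemma4.3} to see that $\Theta_-$ is the whole product. Your final paragraph essentially lands on this.

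However, there is a concrete error that derails the middle of your argument: you assert twice that the phase inequality $\phi^{\pm}(w)>\phi^{\pm}(v)$ forces $Hom(E,F)=0$, in particular that ``$\phi^-(v)<\phi^-(w)$ forces $Hom(E^-,F)=0$ by stability.'' This is backwards. For stable objects, stability kills morphisms from the \emph{higher}-phase object to the lower one, so the inequality gives $Hom(F,E)=Ext^2(E,F)^{*}=0$, while $Hom(E,F)$ is unconstrained (indeed a nonzero $E\to F$ is exactly what \emph{implies} $\phi(v)\le\phi(w)$ in the proof of \cref{lemma4.1}). The error is load-bearing: the whole point of the lemma is that $Hom(E^-,F)\neq 0$ for every pair on the $\sigma_-$ side, which your claim would flatly contradict; it is also what sends you on the detour through an $Ext^1$-injection that you yourself recognize is vacuous (an injection out of the zero space), and it is why the ``sign/direction bookkeeping'' you flag as the main obstacle never gets resolved. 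Once the direction is corrected, no extra mechanism is needed: $Ext^2(E_0,F)=0$ and $(v_0,w)<0$ give $hom(E_0,F)\ge -(v_0,w)>0$ for every pair, \cref{lemma4.3} transports this to $Hom(E^-,F)\neq 0$ for every $E^-$ in the (codimension-one dense) image of $\psi_-$, and since $\chi(E^-,F)=0$ this forces $Ext^1(E^-,F)\neq 0$ everywhere, i.e.\ $\Theta_-$ is all of $M_{\sigma_-}(v)\times M_{\sigma_-}(w)$ and $SD_-=0$. You should also state, as the paper does, the harmless reduction ``without loss of generality $\phi_+(v)>\phi_+(v_0)$'' before invoking \cref{lemma4.2} and \cref{lemma4.3}, since which of $\psi_{\pm}$ is built from twists versus inverse twists depends on it.
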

 
\begin{proof}
Because $Hom(E_+, F)\neq 0$ for some $E_+\in M_{\sigma_+}(v)$, and both $E_+$ and $F$ are $\sigma_0$-semistable, so $\phi_0(v)<\phi_0(w)$. And therefore we have $\phi_0(v_0)= \phi_0(v)<\phi_0(w)$, which implies $Ext^2(E_0, F)=0$ for all $E_0\in M_{\sigma_0}(v_0)$ and all $F\in M_{\sigma_+}(w)$.   

Without loss of generality, we may assume $\phi_+(v)>\phi_+(v_0)$. Thus $\Phi_+:M_{\sigma_0}(v_0) \dashrightarrow M_{\sigma_+}(v)$ is induced by a series of spherical twists. If $0<(v_0, w)= ext^1(v_0, w)- hom(v_0, w)$, then $Ext^1(E_0, F)\neq 0$ for all $(E_0,F)\in M_{\sigma_0}(v_0)\times M_{\sigma_+}(w)$, and by \cref{lemma4.2}, this implies $Ext^1(E^+, F)\neq 0$ for all $(E_+, F)\in M_{\sigma_+}(v)\times M_{\sigma_+}(w)$, which contradicts to our assumption that $SD_+\neq 0.$ Hence, $(v_0, w)<0$ and $Hom(E_0, F)\neq 0$. By \cref{lemma4.3}, $SD_-$ is a zero map.
\end{proof}

Now we turn to the case when $(v_0, w)=0.$ In this case, the rank two hyperbolic lattice $\mathcal H_{\mathcal W}$ is perpendicular to $w.$ Recall that $v=v_l=\sum_{i=1}^l v_0+(v_{i-1}, s_i)s_i$, where $(v_{i-1}, s_i)=ext^1(v_{i-1}, s_i)>0$. Thus, $$\theta_w(v)\cong \theta_w(v_0) \otimes [\otimes_{i=1}^l {\theta_w(s_i)}^{(v_{i-1}, s_i)}] .$$

Denote $\Theta_\pm \subset M_{\sigma_\pm}(v)\times M_\sigma(w)$ the theta divisors, and define $\Theta_0:=\{(E_0, F) \in M_{\sigma_0}(v_0)\times M_\sigma(w): Ext^1(E_0, F)\neq 0\}.$ Then 

\begin{equation}\label{eq1}
\psi_+^*(\theta_v(w)\boxtimes \theta_w(v))\cong [\theta_{v_0}(w)\boxtimes \theta_{w}(v_0)]\otimes q^*(\otimes_i \theta_w(s_i)^{(v_{i-1},s_i)}),
\end{equation}
where $q: M_{\sigma_0}(v_0)\times M_{\sigma}(w) \rightarrow M_{\sigma}(w)$ is the projection, and $\bar s := \sum_{i=1}^l (v_{i-1}, s_i)s_i$. Correspondingly, 
$$\psi_+^{-1}(\Theta_+)= \Theta_0 + \sum_{i=1}^l (v_{i-1}, s_i) q^{-1} D_i^+,$$
where $D_i^+ := \{(S_i^+, F)\in M_{\sigma_+}(s_1)\times M_{\sigma}(w)\cong M_{\sigma}(w): Hom(S_1^+, F)\neq 0\}$ is a divisor with associated line bundle $\theta_w(s_i)$.

Similarly, $\psi_-^{-1}(\Theta_-)= \Theta_0 + \sum_{i=1}^l (v_{i-1}, s_i) q^{-1} D_i^-,$ where $D_i^- := \{(S_i^-, F)\in M_{\sigma_-}(s_1)\times M_{\sigma}(w)\cong M_{\sigma}(w): Hom(S_1^-, F)\neq 0\}.$

\begin{lemma}\label{lemma4.5}
Assume that $\mathcal W$ is a totally semistable wall of $v$ with $(v_0, w)= 0$, then $\psi_+^{-1}(\Theta_+)=\psi_-^{-1}(\Theta_-)$. Consequently, $SD_+=SD_-$.   
\end{lemma}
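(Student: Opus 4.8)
The plan is to reduce the claimed equality of the maps $SD_\pm$ to an equality of effective divisors on the fixed product $M_{\sigma_0}(v_0)\times M_\sigma(w)$, and then to identify those divisors through the two $\sigma_0$-stable spherical objects. Since $\mathcal W$ is not a wall of $w$, the space $M_\sigma(w)$ is unchanged across $\mathcal W$, and by \cref{BMthm2} the wall-hitting maps $\psi_\pm$ are isomorphisms in codimension one. The line bundle $\theta_v(w)\boxtimes\theta_w(v)$ depends only on the Mukai vectors, so both $\psi_+\times\mathrm{id}$ and $\psi_-\times\mathrm{id}$ pull it back to the same line bundle $\theta_{v_0}(w)\boxtimes\theta_w(v)$; hence $\psi_+^{-1}(\Theta_+)$ and $\psi_-^{-1}(\Theta_-)$ are effective divisors in one and the same complete linear system. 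As $M_{\sigma_0}(v_0)$ and $M_\sigma(w)$ are normal, $H^0$ is a birational invariant, and transporting $SD_\pm$ through $\psi_\pm$ realizes both as the duality map $H^0(M_{\sigma_0}(v_0),\theta_{v_0}(w))\to H^0(M_\sigma(w),\theta_w(v))^*$ cut out by $\psi_\pm^{-1}(\Theta_\pm)$. Thus it suffices to prove $\psi_+^{-1}(\Theta_+)=\psi_-^{-1}(\Theta_-)$. By the divisor computation recorded just before the statement, both equal $\Theta_0+\sum_{i=1}^l (v_{i-1},s_i)\,q^{-1}(D_i^\pm)$ with the \emph{same} class $s_i$ on each side (the stable representatives $S_i^+$, $S_i^-$ differ, but their Mukai vectors agree), so everything reduces to the equalities $D_i^+=D_i^-$ on $M_\sigma(w)$.

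To prove $D_i^+=D_i^-$ I would not try to match the objects $S_i^\pm$, but instead express both divisors through the two $\sigma_0$-stable spherical objects. By \cite{BM2} (Proposition 6.3), in the nonisotropic case $\mathcal H_{\mathcal W}$ carries exactly two spherical classes with a $\sigma_0$-stable representative, say $S$ and $T$, and every $\sigma_0$-semistable spherical object has Jordan--H\"older factors among $\{S,T\}$ (\cref{mukaicor}). Writing $s_i=a_i\,v(S)+b_i\,v(T)$ with $a_i,b_i\geq 0$, set $D_S:=\{F:\hom(S,F)\neq 0\}$ and $D_T:=\{F:\hom(T,F)\neq 0\}$, effective divisors of classes $\theta_w(v(S))$ and $\theta_w(v(T))$. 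The key cohomological input is that for any $\sigma_0$-semistable $G$ with $v(G)\in\mathcal H_{\mathcal W}$ one has $\hom(G,F)=\mathrm{ext}^1(G,F)$ and $\mathrm{ext}^2(G,F)=0$: orthogonality $\mathcal H_{\mathcal W}\perp w$ gives $\chi(G,F)=-(v(G),w)=0$, while $\phi_0(G)=\phi_0(v)<\phi_0(w)=\phi_0(F)$ (as in the proof of \cref{lemma4.4}) forces $\hom(F,G)=0$, hence $\mathrm{ext}^2(G,F)=\hom(F,G)^*=0$ by Serre duality.

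Using this I would show $D_i^+=D_i^-=a_iD_S+b_iD_T$, independently of the side. First, the support is controlled: if $F\notin D_S\cup D_T$, then $\mathrm{Hom}^\bullet(S,F)=\mathrm{Hom}^\bullet(T,F)=0$ (each complex is concentrated in degree one, with vanishing $\mathrm{ext}^1$ by the displayed identities), and d\'evissage along the $\sigma_0$-Jordan--H\"older filtration of $S_i^\pm$ yields $\mathrm{Hom}^\bullet(S_i^\pm,F)=0$, so $F\notin D_i^\pm$; thus $\mathrm{Supp}(D_i^\pm)\subseteq D_S\cup D_T$. Second, the class of $D_i^\pm$ is $\theta_w(s_i)=a_i\,\theta_w(v(S))+b_i\,\theta_w(v(T))=a_i[D_S]+b_i[D_T]$. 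Since $v(S),v(T)$ are linearly independent in $\mathcal H_{\mathcal W}$ and $\theta_w$ is injective on $\mathcal H_{\mathcal W}$ (its kernel is spanned by $w\notin\mathcal H_{\mathcal W}$), the classes $[D_S],[D_T]$ are independent; an effective divisor supported on $D_S\cup D_T$ with class $a_i[D_S]+b_i[D_T]$ must therefore be exactly $a_iD_S+b_iD_T$. As the right-hand side involves only $s_i$, $v(S)$, $v(T)$ and the fixed objects $S,T$, it is the same for $+$ and $-$, giving $D_i^+=D_i^-$, hence $\psi_+^{-1}(\Theta_+)=\psi_-^{-1}(\Theta_-)$ and $SD_+=SD_-$.

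I expect the main obstacle to be the last, multiplicity-reading step: to pass from ``support in $D_S\cup D_T$ plus numerical class'' to the exact identity $D_i^\pm=a_iD_S+b_iD_T$, I need $D_S$ and $D_T$ to be reduced and irreducible (or at least to have linearly independent supporting cycles), which requires genericity of $\sigma_0$ and an argument that these jumping divisors are prime. Should primeness be awkward to guarantee, I would fall back on the numerical route: both $D_i^\pm$ are effective members of $|\theta_w(s_i)|$, and because $s_i$ is spherical the dual moduli space $M_\sigma(s_i)$ is a single reduced point, so the base case of strange duality gives $h^0(M_\sigma(w),\theta_w(s_i))=1$, which again forces $D_i^+=D_i^-$.
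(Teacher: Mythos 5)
Your setup (reduction to $D_i^+=D_i^-$, the identification of the pulled-back line bundles, and the key cohomological input that for $G$ with $v(G)\in\mathcal H_{\mathcal W}\subseteq w^{\perp}$ one has $\mathrm{ext}^2(G,F)=0$ and hence $\hom(G,F)=\mathrm{ext}^1(G,F)$) coincides with the paper's. The divergence is in the endgame, and that is where your argument has a genuine gap. The paper does not try to read off multiplicities from the class $\theta_w(s_i)$; it proves the two-sided set-theoretic identity directly: if $S_i^+$ is strictly $\sigma_0$-semistable, then \emph{both} stable spherical objects $S$ and $T$ must occur among its Jordan--H\"older factors (otherwise $s_i$ would be a positive multiple of a single spherical class, contradicting $(s_i,s_i)=-2$), and the same holds for $S_i^-$; the identity $\hom(T_j,F)=\mathrm{ext}^1(T_j,F)$ then lets nonvanishing of $\mathrm{Hom}$ propagate through the filtration in \emph{both} directions, giving $\mathrm{Hom}(S_i^+,F)\neq 0\iff\hom(S,F)+\hom(T,F)\neq 0\iff\mathrm{Hom}(S_i^-,F)\neq 0$. (If $S_i^+$ is $\sigma_0$-stable one simply has $S_i^+=S_i^-$.) You prove only the containment $\mathrm{Supp}(D_i^\pm)\subseteq D_S\cup D_T$ and then appeal to the numerical class to force $D_i^\pm=a_iD_S+b_iD_T$; as you yourself note, this requires $D_S$ and $D_T$ to be prime (or at least that their irreducible components have linearly independent classes), and nothing in the paper or in standard references guarantees that these jumping divisors are reduced and irreducible for generic $\sigma_0$.

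Your fallback does not close the gap either: the assertion $h^0(M_\sigma(w),\theta_w(s_i))=1$ is itself a strange-duality statement for the \emph{spherical} class $s_i$, not one of the known base cases (those concern isotropic or positive classes, e.g.\ Hilbert schemes); the fact that $M_\sigma(s_i)$ is a reduced point gives $h^0$ on the wrong side of the duality and says nothing about sections of $\theta_w(s_i)$ on $M_\sigma(w)$ without already assuming the kind of isomorphism this paper is trying to establish. The fix is simply to replace your multiplicity-reading step by the reverse containment argument above: show that both $S$ and $T$ appear as stable factors of each of $S_i^{\pm}$, so that $D_i^+$ and $D_i^-$ have the same support $D_S\cup D_T$ described purely in terms of the $\sigma_0$-stable objects, which is manifestly independent of the side of the wall.
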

\begin{proof}
It suffices to show $D_i^+ = D_i^-$, which is amount to show $Hom(S_i^+, F)\neq 0$ if and only if $Hom(S_i^-, F)\neq 0$, for any $F\in M_{\sigma}(w)$. 

$S_i^+$ is $\sigma_+$-stable, hence $\sigma_0$-semistable. If $S_i^+$ is $\sigma_0$-stable then $S_i^+ = S_i^-$ and the claim follows. So we can assume $S_i^+$ is strictly semistable. By Mukai's lemma, all its $\sigma_0-$stable factors are also spherical. According to [\cite{BM2}, Proposition 6.3], there are exactly two $\sigma_0-$stable spherical objects have the same phase with $s_i,$ denoted by $T_1$ and $T_2$. Both of them should appear as stable factors of $S_i^+$, because otherwise $s_i$ would be a multiple of a spherical class, contradicting the assumption that itself is spherical. For the same reason, $S_i^-$ should also contain both stable objects. 

Now suppose $Hom(S_i^+, F)\neq 0$, then either $Hom(T_1, F)\neq 0$ or $Hom(T_2, F)\neq 0$. As $T_1,\  T_2\in _{\mathcal W}\subseteq w^\perp$ and $Ext^2(T_i, F)=0$, $hom(T_i, F)=ext^1(T_i, F),$ for $i=1, 2,$ so that fact that either $Hom(T_1, F)\neq 0$ or $Hom(T_2, F)\neq 0$ implies $Hom(S_i^-, F)\neq 0$. The other direction is also true for the same reason. Thus, $D_i^+ = D_i^-.$
\end{proof}

\begin{proof}[Proof of \cref{prop4}] By \cref{lemma4.1}, \cref{lemma4.4} and \cref{lemma4.5}.

\end{proof}

\subsection{Strange duality via wall-hitting.}\label{sec4.2} Let $v_r,$ $w_s$ be Mukai vectors of ranks $r,$ $s$ respectively and satisfy $(v_r, w_s^\vee)=0.$ Write $v_r:=(r, c+(a+rp)f, p),$ $w_s:=(s, c+(b+rp)f, q),$ so that $(v_r,v_r)=2a-2,$ $(w_s,w_s)=2b-2.$ Note that the assumption $(v_r, w^\vee_s)=0$ becomes $a+b-2=-(r+s)(p+q).$ And the condition $(iii)$ in \cref{SD} is equivalent to $p+q+s+r\leq 0.$ Recall that the strange duality morphism $$SD_{(r,s)}: H^0(M_{H}(v_{r}), \theta(w_{s})) \to H^0(M_{H}(w_{s}), \theta(v_{r}))^*$$ is defined by the theta locus $\Theta_{(r,s)}:=\{(E_r,F_s)\in M_H(v_r)\times M_H(w_s)\ |\ Ext^1(F_s^\vee[1], E_r) \neq 0\}.$

In \cref{sec3.2}, we see that the first totally semsitable walls $\mathcal V_r$ of $v_r, 1\leq r \leq R,$ are nested on a slice $P_m:=\{\sigma_{uH, tH}:t>0\}$ where $H=c+mf,$ for some $m>>0.$ Now we define $\mathcal W_s$ to be the potential wall for $v(F_s^\vee[1])$ caused by the class $\mathcal S_{s-1}^\vee[1],$ where $F_s\in M_H(w_s)$ and $\mathcal S_{s-1}$ is a spherical object that causes the first totally semistable wall for $w_s$.   

\begin{lemma} $\mathcal W_s$ is the first totally semistable wall for $-w_s^\vee$ along some suitable rays on $P_H.$
Given $r, s\geq 0,$ $\mathcal V_r$ intersects $\mathcal W_s$ on the slice $P_m.$
\end{lemma}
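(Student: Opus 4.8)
The plan is to treat the two assertions in turn. For the first, I would use the contravariant derived equivalence $D := RHom(-,\mathcal O_X)[1]$, whose action on the Mukai lattice is $x \mapsto -x^\vee$, so that $D$ sends $w_s$ to $-w_s^\vee = v(F_s^\vee[1])$. The functor $D$ also acts on $Stab^\dag(X)$, restricting on the slice $P_H$ to the reflection $u \mapsto -u,\ t\mapsto t$; being an (anti)equivalence it carries the wall-and-chamber decomposition for $w_s$ onto that for $-w_s^\vee$ and, since it fixes $t$, sends the first totally semistable wall met along a downward ray to the first such wall along the reflected downward ray. It therefore suffices to locate the first totally semistable wall of $w_s$ itself. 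Because $w_s = (s, c+(b+rp)f, q)$ satisfies $c_1\cdot f = 1$, after twisting by a suitable $\mathcal O(jf)$ — which preserves $H$-Gieseker stability and only translates $P_H$ horizontally — it is one of the vectors analyzed in \cref{sec3}, whose first totally semistable wall is caused by $\mathcal S_{s-1}$ (\cref{prop1}). Transporting through $D$ then shows that the first totally semistable wall of $-w_s^\vee$ is caused by $D(\mathcal S_{s-1}) = \mathcal S_{s-1}^\vee[1]$, which is $\mathcal W_s$ by construction.

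For the second assertion I would argue by elementary geometry of coaxial semicircles. On $P_m$ both $\mathcal V_r$ and $\mathcal W_s$ are circles centered on the $u$-axis: as in \eqref{eq7} the coefficient of $u^2+t^2$ is negative for $m\gg 0$, placing the center of $\mathcal V_r$ in $\{u<0\}$, while $\mathcal W_s$ is the image under $u\mapsto -u$ of a wall centered in $\{u<0\}$, hence is centered in $\{u>0\}$. A circle centered at $(c,0)$ of radius $\rho$ meets the $u$-axis exactly at $c\pm\rho$, and two such coaxial circles intersect — necessarily in a single point of the open upper half-plane, by symmetry across the $u$-axis — precisely when their $u$-intervals $[c_1-\rho_1,\,c_1+\rho_1]$ and $[c_2-\rho_2,\,c_2+\rho_2]$ overlap without one containing the other. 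I would therefore write both wall equations explicitly, read off the endpoints $u_r^\pm$ of $\mathcal V_r$ and $u_s^\pm$ of $\mathcal W_s$, and verify the proper overlap $u_r^- < u_s^- < u_r^+ < u_s^+$. Here the orthogonality relation $a+b-2 = -(r+s)(p+q)$ is the input linking the two sets of endpoints, and choosing $m\gg 0$ guarantees (via \cref{lemma4}) that both walls are genuine semicircles whose radii are large enough to force the overlap.

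The step I expect to be the main obstacle is securing this overlap uniformly over the whole range $r,s\geq 0$, and in particular at the low-rank boundary. For $r,s\geq 2$ the endpoint inequalities reduce to a routine estimate, but the rank-one case is exactly the ideal-sheaf situation, so the relevant wall and its right endpoint must be imported from \cref{prop3} together with the prescribed choice of $H$; and the rank-zero vector has no first totally semistable wall in the sense of \cref{sec3} at all, so $\mathcal V_0$ must first be described separately — most naturally by transporting $v_0$ to a positive-rank vector through a Fourier–Mukai equivalence along the elliptic fibration — before its semicircle can be compared with $\mathcal W_s$. Checking that the left wall still reaches far enough to the right (equivalently $u_s^- < u_r^+$) in these degenerate cases, rather than relying on the clean generic estimate, is where the real care is needed.
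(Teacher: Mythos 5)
Your route coincides with the paper's on both counts: for the first assertion the paper likewise observes that $\mathcal W_s$ is the mirror image, under $u\mapsto -u$ on $P_m$, of the first totally semistable wall of $w_s$ caused by $\mathcal S_{s-1}$, and quotes \cite[proposition 2.11]{BM2} for the compatibility of the dualizing functor with stability conditions — your description via $RHom(-,\mathcal O_X)[1]$ is the same argument made explicit. For the second assertion the paper also writes down the two circle equations on $P_m$, but then simply asserts that for $m$ sufficiently large they have a common solution; it does not carry out the endpoint comparison you propose.

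That comparison is exactly where the trouble lies, and your suspicion that the overlap is not automatic is justified — but you locate the danger in the wrong place. Running your own plan to completion: as $m\to\infty$ the centers of $\mathcal V_r$ and $\mathcal W_s$ tend to $\mp\sqrt{2/m}$ and their radii to $\sqrt{(4r-2)/m}$ and $\sqrt{(4s-2)/m}$, so the two semicircles meet at a point with $t>0$ if and only if, to leading order, $\bigl|\sqrt{4r-2}-\sqrt{4s-2}\bigr|<2\sqrt2<\sqrt{4r-2}+\sqrt{4s-2}$, which for $r,s\geq 1$ simplifies to the single inequality $(r-s)^2<4(r+s-2)$. This is \emph{not} a routine estimate valid whenever $r,s\geq 2$: it fails whenever the ranks are sufficiently imbalanced, e.g.\ for $(r,s)=(1,6)$ or $(2,8)$ the smaller semicircle lies strictly inside the larger one and the walls are disjoint on $P_m$ for every large $m$, so no choice of $m\gg 0$ rescues the statement. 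Hence the genuine gap is not only the low-rank boundary (though you are right that $\mathcal V_0$ is never defined, so ``$r,s\geq 0$'' cannot be meant literally); it is the imbalance $|r-s|$, and neither your outline nor the paper's one-sentence justification disposes of it. A correct version must either restrict to pairs with $(r-s)^2<4(r+s-2)$ or abandon the single slice $P_m$ in favour of a different path in $Stab^\dag(X)$; as written, the second claim of the lemma is false, and the step you flagged as ``the main obstacle'' is in fact an obstruction.
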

\begin{proof}
First, note that $\mathcal W_s$ is symmetric to the first totally wall for $w_s$ caused by $\mathcal S_{s-1},$ about the vertical ray ${u=0}$ on $P_m.$ The first claim then follows from \cite[proposition 2.11]{BM2}.
Recall that the equation of $\mathcal V_r$ on $P_m$ is (\cref{eq7})
\begin{equation*}\label{eq8}
(r^2-r+2-a-m)(u^2+t^2)-2\sqrt{2m-2} u + 4(r-1)=0.
\end{equation*}

Similarly, the equation of $\mathcal W_s$ is
\begin{equation*}\label{eq9}
(s^2-s+2-b-m)(u^2+t^2)+2\sqrt{2m-2} u + 4(s-1)=0.
\end{equation*}
For $m$ sufficiently large, these two equations have a common solution.
\end{proof}

Now choose a sufficiently large $m,$ then on $P_m,$ $\mathcal V_r$ and $\mathcal W_s$ cut out four chambers near their intersection point. Denote $\sigma_0$ their intersection point.

\begin{lemma}
Suppose that both $\theta_v(w)$ and $\theta_w(v)$ are movable line bundles. Choose a stability condition $\sigma$ above $\mathcal V_r$ and $\mathcal W_s$ on $P_m$, and sufficiently close to $\sigma_0.$ Then the strange duality morphism  $$SD^{\sigma}_{(r, s)}: H^0(M_{\sigma}(v_{r}), \theta^\sigma(-w_{s}^\vee)) \to H^0(M_{\sigma}(-w_{s}^\vee), \theta^\sigma(v_{r}))^*$$ is the same as $SD_{(r,s)}.$
\end{lemma}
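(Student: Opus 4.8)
The plan is to check that the two morphisms are restrictions of one another to a common big open set, and then to use movability to extend the identification over all of $H^0$. I would carry this out factor by factor, so I first identify the two pairs of moduli spaces birationally. Since $\sigma$ lies above $\mathcal V_r$ and $\mathcal V_r$ is the \emph{first} totally semistable wall of $v_r$ (established in \cref{sec3.2}, see \cref{prop1}), every wall separating $\sigma$ from the Gieseker chamber of $v_r$ is non-totally-semistable; by \cref{BMthm1} each such crossing is an isomorphism in codimension one, so $M_\sigma(v_r)$ and $M_H(v_r)$ are isomorphic away from a closed subset of codimension $\geq 2$, and a generic $\sigma$-stable object of class $v_r$ is exactly a Gieseker $H$-stable sheaf. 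For the other factor I would apply the derived dual $(-)^\vee[1]$: since $\mathcal W_s$ is the reflection of the first totally semistable wall of $w_s$ about the vertical ray $\{u=0\}$ (the symmetry used in the previous lemma via \cite[Proposition 2.11]{BM2}), being above $\mathcal W_s$ identifies $M_\sigma(-w_s^\vee)$, in codimension one, with $M_H(w_s)$ through $F_s\mapsto F_s^\vee[1]$. Thus a generic point of $M_\sigma(-w_s^\vee)$ is $F_s^\vee[1]$ for $F_s$ a Gieseker $H$-stable sheaf.

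Next I would match the line bundles together with their sections. The determinant classes $\theta^\sigma$ of \cref{detlb}(3) are numerical, hence preserved under the codimension-one isomorphisms above; combined with the compatibility of \cref{det} this gives $\theta^\sigma(-w_s^\vee)\leftrightarrow\theta(w_s)$ on the first factor and $\theta^\sigma(v_r)\leftrightarrow\theta(v_r)$ on the second. Because both $\theta(w_s)$ and $\theta(v_r)$ are assumed movable, their linear systems are unchanged by the codimension-one modifications (no component of the base locus of a movable system is contracted or flipped), so restriction induces canonical isomorphisms
\begin{gather*}
H^0(M_\sigma(v_r),\theta^\sigma(-w_s^\vee))\cong H^0(M_H(v_r),\theta(w_s)), \\
H^0(M_\sigma(-w_s^\vee),\theta^\sigma(v_r))\cong H^0(M_H(w_s),\theta(v_r)).
\end{gather*}

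Finally I would compare the defining theta divisors. On the common big open locus the two divisors agree tautologically: a generic pair is $(E_r,F_s^\vee[1])$ with $E_r,F_s$ Gieseker stable, and the incidence condition $Ext^1(F_s^\vee[1],E_r)\neq0$ cutting out $\Theta_\sigma$ is precisely the condition defining $\Theta_{(r,s)}$. Hence $\Theta_\sigma$ and $\Theta_{(r,s)}$ have the same restriction to the big open set, and by movability their associated sections correspond under the isomorphisms above; therefore $SD^\sigma_{(r,s)}$ and $SD_{(r,s)}$ are the same morphism.

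I expect the main obstacle to be this last step done honestly: one must ensure that the effective theta divisor extends across the codimension-one exceptional loci with the \emph{same} line-bundle class and section, which is exactly where the movability hypothesis on $\theta_v(w)$ and $\theta_w(v)$ is indispensable --- without it the strict transform of $\Theta$ could acquire or shed components supported on the modified loci, altering the section and hence the map. A secondary technical point is verifying that the dual functor on the $w_s$-factor carries the ambient stability condition to the one implicit in $M_H(w_s)$ compatibly with the theta construction of \cref{detlb}, so that the incidence loci match on the nose rather than merely up to a further birational correction.
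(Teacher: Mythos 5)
Your overall strategy (identify the two strange duality data at a large volume limit, then transport the identification from the Gieseker chamber down to $\sigma$) is the same as the paper's, but there is a genuine gap in the transport step. You assert that because $\mathcal V_r$ is the first \emph{totally semistable} wall, every wall separating $\sigma$ from the Gieseker chamber is harmless, and you cite \cref{BMthm1} for the claim that each such crossing is an isomorphism in codimension one. \cref{BMthm1} does not say this: it only provides a big open set $U\subset M_{\sigma_+}(v)$ on which $\phi$ is an isomorphism onto its image, and at a \emph{divisorial contraction} wall the image of $U$ is not big, so the crossing is not an isomorphism in codimension one. Crucially, divisorial walls need not be totally semistable (e.g.\ the Brill--Noether case of Bayer--Macr\`i's classification, where there is a spherical class $s$ with $(s,v)=0$), so your hypothesis ``first totally semistable wall not yet reached'' does not exclude them. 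Across such a wall a divisor is contracted and the identification of line bundles, sections, and theta divisors can fail, so your argument does not go through as stated.

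The paper closes exactly this gap by importing the argument of \cite[Proposition 15.1]{BM2}: one chooses the path from $\sigma_\infty$ to $\sigma$ inside an open set $V$ contained in the geometric chamber and in the common totally-semistable chamber, and mapping under $l_{v}$ and $l_{w}$ into the \emph{interiors} of the movable cones of $M_\sigma(v_r)$ and $M_\sigma(-w_s^\vee)$; since divisorial walls are sent to the boundary of the image of $l_v$ (resp.\ $l_w$), such a path crosses no divisorial wall either. This is also where the movability hypothesis on $\theta_v(w)$ and $\theta_w(v)$ actually does its work. By contrast, the place where you invoke movability --- extending sections of a line bundle across a modification that is already an isomorphism in codimension one --- needs no hypothesis at all: for normal projective varieties isomorphic away from codimension two, $H^0$ of corresponding line bundles agree by normality. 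So the hypothesis is mislocated in your write-up, and the step it is really needed for (ruling out divisorial walls) is missing.
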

\begin{proof}
There exists a large voluem limit $\sigma_\infty$ such that $M_{\sigma_\infty}(v_r) = M_H(v_r)$ by \cref{prop2.2.6}, and moreover $\theta^{\sigma_\infty}(-w_{s}^\vee)=\theta(w_s)$ by \cref{det}. 

Also, by \cite[proposition 2.11]{BM2}, we can choose the large volume limit $\sigma_\infty$ such that $M_{\sigma_\infty}(-w_s^\vee)\cong M_H(w_s),$ induced by the derived equivalence $(-)^\vee[1].$ So $\theta^{\sigma_\infty}(v_{r})$ is identified with $\theta(v_r)$ under the isomorphism. Also, the theta divisors $\Theta^{\sigma_\infty}_{(r,s)}$ and $\Theta_{(r,s)}$ also get identified. Therefore, $SD^{\sigma_\infty}_{(r,s)}=SD_{(r,s)}.$

Since $\sigma$ is above both $\mathcal V_r$ and $\mathcal W_s,$ there eixsts a path $\gamma'$ in $Stab^\dag(X),$ going from $\sigma_\infty$ to $\sigma$ and crossing no divisorial nor totally semistable walls. Indeed, this is an argument in the proof of \cite[proposition 15.1]{BM2}: consider the chamber structure on $Stab^\dag(X)$ cut out by all totally semistable walls for $v_r$ and $-w_s^\vee$. Note that \cref{thm0} shows that $\sigma$ and $\sigma_\infty$ lie in a common chamber $\mathcal C_{tot}$ . Also, under the map $l_v: U(X) \to NS(M_\sigma(v_r))$ (resp. $l_w: Stab^\dag(X) \to NS(M_\sigma(-w_s^\vee))$) defined in \cite[theorem 10.2]{BM2}, where $U(X)\subset Stab^\dag(X)$ is the geometric chamber, $\sigma_\infty$ is mapped to an interior point of the movable cone of $M_\sigma(v_r)$ (resp. $M_\sigma(-w_s^\vee)$), thus there exist an open subset $V$ containing $\sigma, \sigma_\infty,$ contained in both $U(X)$ and the totally semistable chamber $\mathcal C_{tot},$ and maps to the interior of the movable cones of $M_\sigma(v_r)$ and $M_\sigma(-w_s^\vee)$ under $l_v$ and $l_w$ repsectively. Choose a path in this open subset $V,$ then it corsses no totally semistable nor divisorial walls, because divisorial walls of $v$ (resp. $w$) are send to the boundary of the image of $l_v$ (resp. $l_w$) (\cite[lemma 10.1]{BM2}). Thus, $SD^{\sigma_\infty}_{(r,s)} = SD^\sigma_{(r,s)}.$

\end{proof}

\begin{rmk}
Under the assumption of \cref{SD}, the line bundles $\theta_v(w)$ and $\theta_w(v)$ are movable. Indeed, via O'Grady's birational maps, they are identified with tautological line bundles $L^{[a]}$ and $L^{[b]}$ on the Hilbert schemes $X^{[a]}$ and $X^{[b]}$ respectively, where $L=\mathcal O_X(-(p+q+r+s)f),$ which is nef by condition (iii) in \cref{SD}. Thus, the tautological line bundles are nef. Since O'Grady's birational maps are  isomorphic in codimension one, $\theta_v(w)$ and $\theta_w(v)$ are movable.
\end{rmk}

Now we can compare the strange duality morphisms. To facilitate computation, we "normalize" our sheaves. Given $E_r$ of class $v_r=(r, c+(a+rp)f, p),$ define $\tilde{E_r}:=E_r\otimes \mathcal O((-p-r+1)f)$ such that $\chi(\tilde{E_r})=1.$ Then $\mathcal V_r$ is a totally semistable wall of $\tilde{E_r}$ caused by $\mathcal O.$ Note that $H^0(E_r\otimes F_s) = H^0(\tilde{E_r}\otimes \tilde{F_s}((p+q+r+s-2)f)).$

\begin{lemma}Suppose that $\tilde{E_r}$ is generic in moduli, then
\begin{enumerate}[(a).]
    \item $ST_{\mathcal O}(\tilde{E_r}) = \tilde{E_{r-1}}(-2f).$
     
        \item $ST_{\mathcal O(2f)}^{-1}(\tilde{E_r}) = \tilde{E_{r+1}}(2f).$

\end{enumerate}
\end{lemma}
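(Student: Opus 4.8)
The plan is to deduce both identities from a single one, exploiting the conjugation behaviour of spherical twists under line-bundle twists. For any line bundle $L$, tensoring by $L$ is an autoequivalence of $D^b(X)$, so $ST_{S\otimes L}(E\otimes L)\cong ST_S(E)\otimes L$ and likewise $ST_{S\otimes L}^{-1}(E\otimes L)\cong ST_S^{-1}(E)\otimes L$. Granting (a), apply it at index $r+1$ to obtain $ST_{\mathcal O}(\tilde E_{r+1})\cong \tilde E_{r-1+2}(-2f)=\tilde E_r(-2f)$, invert it, and conjugate by $-\otimes\mathcal O(-2f)$: one finds $\tilde E_{r+1}=ST_{\mathcal O}^{-1}(\tilde E_r(-2f))\cong ST_{\mathcal O(2f)}^{-1}(\tilde E_r)\otimes\mathcal O(-2f)$, which is exactly (b) after twisting by $\mathcal O(2f)$. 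Thus it suffices to prove (a).

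For (a) I would first pin down $RHom(\mathcal O,\tilde E_r)$ for a generic $\tilde E_r$. Since $\chi(\mathcal O,\tilde E_r)=\chi(\tilde E_r)=1$ and $Ext^2(\mathcal O,\tilde E_r)\cong Hom(\tilde E_r,\mathcal O)^*$ vanishes by stability (for $m\gg0$ a stable $\tilde E_r$ has positive slope, so admits no nonzero map to $\mathcal O$), we get $h^0(\tilde E_r)-h^1(\tilde E_r)=1$. The generic $\tilde E_r$ carries a unique section up to scalar, namely the destabilising inclusion $\mathcal O\hookrightarrow\tilde E_r$ responsible for the wall $\mathcal V_r$, so $h^0=1$ and hence $h^1=0$. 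Therefore $RHom(\mathcal O,\tilde E_r)\cong\mathbb C$ concentrated in degree $0$, and the defining triangle $RHom(\mathcal O,\tilde E_r)\otimes\mathcal O\to\tilde E_r\to ST_{\mathcal O}(\tilde E_r)$ collapses to the short exact sequence $0\to\mathcal O\to\tilde E_r\to ST_{\mathcal O}(\tilde E_r)\to 0$, exhibiting $ST_{\mathcal O}(\tilde E_r)$ as the rank $r-1$ torsion-free quotient $\tilde E_r/\mathcal O$.

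It then remains to identify $\tilde E_r/\mathcal O$ with $\tilde E_{r-1}(-2f)$. A direct Chern-character computation (using $c_1(\tilde E_r)\cdot f=1$ and $\chi(\tilde E_r)=1$) shows the two Mukai vectors agree. By \cref{prop2} the wall-hitting transformation at $\mathcal V_r$ is this single spherical twist, so the exact sequence above is precisely O'Grady's defining extension read backwards; since $\tilde E_{r-1}(-2f)$ is, by the inductive construction of the normalised sheaves, the generic stable cokernel of the destabilising $\mathcal O$, and the ambient moduli space is irreducible, the two sheaves coincide for generic $\tilde E_r$. This gives (a), and with the reduction above, (b).

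The main obstacle is the cohomological input in the second step: establishing that $RHom(\mathcal O,\tilde E_r)$ is one-dimensional and concentrated in degree zero, i.e. the simultaneous vanishing $H^1(\tilde E_r)=0$, $Hom(\tilde E_r,\mathcal O)=0$ together with the rigidity $h^0(\tilde E_r)=1$ for the generic member of the moduli. This is exactly what forces the abstract spherical-twist triangle to degenerate to O'Grady's short exact sequence; once it is in place, the identification of the cokernel is only a matter of matching Mukai vectors and invoking genericity and irreducibility of the moduli.
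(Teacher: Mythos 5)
Your proof is correct and follows essentially the same route as the paper, whose entire proof is the one-line assertion ``These are precisely O'Grady's construction'': you simply unpack that assertion, verifying that $RHom(\mathcal O,\tilde E_r)\cong\mathbb C$ in degree $0$ for generic $\tilde E_r$ so that the twist triangle collapses to O'Grady's extension $0\to\mathcal O\to\tilde E_r\to\tilde E_{r-1}(-2f)\to 0$ (the same degeneration the paper displays in \cref{sec1.1}), and adding the standard conjugation identity $ST^{-1}_{S\otimes L}(E\otimes L)\cong ST^{-1}_S(E)\otimes L$ to deduce (b) from (a). The only thinly justified point is the rigidity $h^0(\tilde E_r)=1$, but this follows inductively from $h^0(\tilde E_{r-1}(-2f))=0$ for the generic member, which is exactly the genericity built into O'Grady's construction that the paper is citing.
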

\begin{proof}
These are precisely O'Grady's construction. 
\end{proof}

\begin{prop}\label{propex}
\begin{enumerate}[(a).]
    \item If $p+q+r+s=0$, then $SD_{(r,s)}=SD_{(r+1, s-1)}.$

    \item If $p+q+r+s=-2$, then $SD_{(r,s)}=SD_{(r+1, s+1)}^{\sigma'} ,$ where $\sigma'$ is the strange duality morphism defined at some $\sigma'\in Stab(X).$ Consequently, assume in addition that $SD_{(r,s)}$ is an isomorphism, then $SD_{(r+1, s+1)}=0.$
\end{enumerate}
\noindent Combine with \cref{SD}, part (a) yields \cref{ex1} and part (b) gives \cref{ex3}.
\end{prop}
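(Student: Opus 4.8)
The plan is to deduce both parts from a single wall-crossing governed by \cref{prop4}, after transporting the classical duality maps into the moduli-of-complexes world. First I would normalize: since twisting $E_r$ and $F_s$ by line bundles changes neither the moduli spaces nor, by \cref{det}, the determinant bundles, I may pass to $\tilde E_r,\tilde F_s$ with $\chi=1$, for which the first totally semistable walls $\mathcal V_r$ and $\mathcal W_s$ are caused by fibre-twists of $\mathcal O_X$ and O'Grady's maps are exactly the spherical twists $ST_{\mathcal O}$ and $ST^{-1}_{\mathcal O(2f)}$ of the normalization lemma. Using the immediately preceding lemma I then realize $SD_{(r,s)}$ as $SD_\sigma$ for a stability condition $\sigma$ lying above both $\mathcal V_r$ and $\mathcal W_s$ and close to their intersection $\sigma_0$; by \cref{prop2.2.6} this agrees with the large-volume (Gieseker) picture, and the path from the large-volume limit to $\sigma$ meets no totally semistable nor divisorial wall.

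Next I would move $\sigma$ into the neighbouring chamber, crossing the two synchronized first totally semistable walls that meet near $\sigma_0$ — one for $v$ and one for $w=-w_s^\vee$ — and applying \cref{prop4} to each crossing in turn. Since the wall-hitting maps of \cref{BMthm2} are isomorphisms in codimension one, the spaces of sections of the theta bundles and the theta divisors are carried across faithfully, so it suffices to follow $SD_\sigma$. On the two sides the moduli are the Gieseker spaces of $(v_r,w_s)$ and of $(v_{r+1},w_{s-1})$ in case (a), respectively $(v_{r+1},w_{s+1})$ in case (b); here one checks that extending by $\mathcal O_X$ raises both the rank and the $H^4$-component by $1$, so that $p+q+r+s$ is preserved in (a) and raised by $4$ in (b).

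The decisive input is whether the minimal class $v_0$ — equivalently the spherical class $v(\mathcal O(kf))=(1,kf,1)$ causing the wall — pairs trivially with the orthogonal vector. A direct Mukai-pairing computation, using $c\cdot f=1$, $f^2=0$ and the orthogonality $(v_r,w_s^\vee)=0$ to pin down $k$, shows that this pairing vanishes precisely in case (a), when $p+q+r+s=0$, and is non-zero precisely in case (b), when $p+q+r+s=-2$. In case (a) we are then in alternative (2) of \cref{prop4} (\cref{lemma4.5}): the divisorial corrections $D_i^+$ and $D_i^-$ coincide, $SD_\sigma$ is unchanged, and $SD_{(r,s)}=SD_{(r+1,s-1)}$. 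In case (b) we land in alternative (1) (\cref{lemma4.4}): the crossing annihilates the morphism, so $SD_{(r,s)}$ equals $SD^{\sigma'}_{(r+1,s+1)}$ for a $\sigma'$ separated from the Gieseker chamber of $(r+1,s+1)$ by the annihilating wall; since $SD_{(r,s)}$ is an isomorphism, \cref{prop4} then forces the classical $SD_{(r+1,s+1)}=0$.

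Finally I would feed this into \cref{SD}. For (a), because $p+q+r+s=0$ keeps $r+s$ and hence the bound $2(r+s)^2$ fixed, I start from a pair with $r,s\ge 2$, where \cref{SD} already gives an isomorphism, and slide $(r,s)\mapsto(r+1,s-1)$ until one rank drops to $0$ or $1$, obtaining \cref{ex1}; for (b) the computation places $\sigma'$ at a point where $SD^{\sigma'}$ agrees with an isomorphic classical map while being separated from the Gieseker chamber of $(r+1,s+1)$ by a single totally semistable wall, giving \cref{ex3}. The step I expect to be the main obstacle is precisely the numerical dichotomy of the previous paragraph: one must not only evaluate the pairing but also verify that the wall actually crossed is a \emph{nonisotropic} totally semistable wall, so that \cref{prop4} applies rather than a Hilbert-Chow or fake wall, and that the movability of $\theta_v(w)$ and $\theta_w(v)$ — needed to compare sections across O'Grady's codimension-one isomorphisms — persists all along the slide.
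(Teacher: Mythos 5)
The ingredients you list are the right ones (normalization to $\tilde E_r,\tilde F_s$, the lemma identifying $SD_{(r,s)}$ with $SD_\sigma$ near $\sigma_0$, and \cref{prop4} for the annihilation in case (b)), but the central mechanism of your argument --- identifying $SD_{(r,s)}$ with $SD_{(r+1,s\mp1)}$ by \emph{crossing} the totally semistable walls of $v_r$ and $-w_s^\vee$ and invoking \cref{prop4} at each crossing --- does not work and is not what the paper does. Crossing a wall never changes the Mukai vector: $M_{\sigma_-}(v_r)$ still parametrizes objects of class $v_r$, so the two sides of the wall are \emph{not} the Gieseker spaces of $(v_r,w_s)$ and $(v_{r+1},w_{s-1})$ as you assert. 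Moreover \cref{prop4}(2) (via \cref{lemma4.5}) only says $SD_{\sigma_+}=SD_{\sigma_-}$ for a \emph{fixed} orthogonal pair, which is not the statement $SD_{(r,s)}=SD_{(r+1,s-1)}$. The rank shift enters through the wall-\emph{hitting} transformations, i.e.\ through the intermediate moduli $M_{\sigma_0}(v_0)\cong M_H(v_r)$ sitting on the wall $\mathcal V_{r+1}$ of the \emph{higher-rank} vector, not through the two chambers adjacent to a wall of $v_r$. As written, your proof of (a) contains no step that actually changes the pair of Mukai vectors.

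What the paper actually does is apply a single derived auto-equivalence to \emph{both} arguments of the Hom defining the theta locus and use that equivalences preserve Hom spaces: $Hom(F_s^\vee,E_r)=Hom(\tilde F_s^\vee,\tilde E_r(-2f))=Hom(ST^{-1}_{\mathcal O}(\tilde F_s^\vee),ST^{-1}_{\mathcal O}(\tilde E_r(-2f)))$, and the normalization lemma converts the resulting pair into $(\tilde F_{s-1},\tilde E_{r+1})$ up to fibre twists. In case (a) both images are again Gieseker stable, so the theta divisors and hence the $SD$ maps are literally identified --- no wall is crossed and \cref{prop4} is never used; the relevant fact is not that some pairing vanishes but that the twist lands in Gieseker chambers on both factors. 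In case (b) the same manipulation leaves $\tilde F_{s+1}$ Gieseker stable but sends $\tilde E_r$ to $ST_{\mathcal O(2f)}(\tilde E_r)(-2f)$, which is stable only at a $\sigma'$ on the far side of $\mathcal V_{r+1}$; this is where $SD_{(r,s)}=SD^{\sigma'}_{(r+1,s+1)}$ comes from, and only then does \cref{prop4}(1), with the explicit computation $(\tilde F_{s+1}^\vee,\mathcal O(2f))=-3\neq 0$, force $SD_{(r+1,s+1)}=0$. Your concluding worries about nonisotropic versus Hilbert--Chow walls and about movability are legitimate but secondary; the missing idea is the simultaneous twist of both factors.
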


\begin{proof}[Proof of \cref{propex}]
For (a), if $p+q+r+s=0,$ then 
\begin{equation*}
    \begin{split}
 Hom(F_s^\vee, E_r) & = Hom(\tilde{F_s}^\vee, \tilde{E_r}(-2f)) \\
 & = Hom(ST_{\mathcal O}^{-1}(\tilde{F_s}^\vee), ST_{\mathcal O}^{-1}(\tilde{E_r}(-2f))) \\
 & = Hom(ST_{\mathcal O}(\tilde{F_s})^\vee, \tilde{E_{r+1}}) \\ 
 & = Hom((\tilde{F_{s-1}}, \tilde{E_{r+1}}(-2f))\\
 & = Hom(F_{s-1}^\vee, E_{r+1}).
    \end{split}
\end{equation*}
Thus, $SD_{(r,s)}=SD_{(r+1,s-1)}.$ For (b), given $p+q+r+s=-2,$ then 
\begin{equation*}
    \begin{split}
 Hom(F_s^\vee, E_r) & = Hom(\tilde{F_s}^\vee, \tilde{E_r}(-4f)) \\
 & = Hom(ST_{\mathcal O}((\tilde{F_s}(-2f))^\vee), ST_{\mathcal O}(\tilde{E_r}(-2f))) \\
 & = Hom(ST_{\mathcal O}^{-1}(\tilde{F_s}(-2f))^\vee, ST_{\mathcal O(2f)}(\tilde{E_r})(-2f))) \\ 
 & = Hom(\tilde{F_{s+1}}^\vee, ST_{\mathcal O(2f)}(\tilde{E_r})(-2f))).
    \end{split}
\end{equation*}
As $ST^{-1}_{\mathcal O(2f)}$ inducing an wall-hitting transformation at the totally semistable wall $\mathcal V_r$, $ST_{\mathcal O(2f)}$ induces the other wall-hitting transformation to the other side of the wall. Thus $E':= ST_{\mathcal O(2f)}(\tilde{E_r})(-2f))$ is $\sigma'$-stable, for some $\sigma'$ on the other side of $\mathcal V_r$. The equalities shows that $$SD_{(r,s)}=SD^{\sigma'}_{(r+1,s+1)}.$$ Note that this wall is caused by $\mathcal O(2f)$ and $(\tilde{F_{s+1}}^\vee, \mathcal O(2f))=-3\neq 0,$ thus by \cref{prop4}, $SD_{(r+1, s+1)}=0.$
\end{proof}

\bibliographystyle{alpha}
\bibliography{references}

\end{document}